\newcounter{hypo}
\newcommand{\R}{\ensuremath{\mathbb{R}}}
\newcommand{\N}{\ensuremath{\mathbb{N}}}
\newcommand{\ep}{\varepsilon}
\newcommand{\beq}[1]{\begin{equation}\label{#1}}
\newcommand{\eeq}{\end{equation}}
\newcommand{\beqs}{\begin{equation*}}
\newcommand{\eeqs}{\end{equation*}}
\newcommand{\set}[1]{\left\{#1\right\}}
\newcommand{\dd}{{d}}
\newcommand{\eqn}{\begin{eqnarray}}
\newcommand{\een}{\end{eqnarray}}
\newcommand{\D}{\Lambda}
\def\R{\Bbb R}
\def\iint{\int\int}
\newtheorem{theorem}{Theorem}[section]
\newtheorem{lemma}{Lemma}[section]
\newtheorem{proposition}[theorem]{Proposition}
\theoremstyle{definition}
\newtheorem{definition}{Definition}[section]
\newtheorem{remark}{Remark}
\numberwithin{equation}{section}
\begin{document}

\title[Regularity Results for generalized SQG equations]{Regularity results for a class of generalized surface quasi-geostrophic equations}

\author{Omar Lazar}
\address{Instituto de Ciencias Matem\'aticas (ICMAT),  Consejo Superior de Investigaciones Cient\'ificas, Madrid, and Departamento de An\'alisis Matem\'atico \& IMUS, Universidad de Sevilla, C/ Tarifa s/n, Campus Reina Mercedes, 41012 Sevilla, Spain}
\email{omar.lazar@icmat.es}

\author{Liutang Xue}
\address{Laboratory of Mathematics and Complex Systems (MOE), School of Mathematical Sciences,  Beijing Normal University, Beijing 100875, P.R. China}
\email{xuelt@bnu.edu.cn}


\date{\today}

\keywords{Transport equations, SQG equation, Global weak solutions, Regularity}

\subjclass[2010]{Primary 35A01, 35D30, 35Q35, 35Q86}

\begin{abstract}
We show a global existence result of weak solutions for a class of generalized Surface Quasi-Geostrophic equation in the inviscid case.  We also prove the global regularity of such solutions for the equation with slightly supercritical dissipation, which turns out to correspond to a logarithmically supercritical diffusion due to the singular nature of the velocity. Our last result is the eventual regularity in the supercritical cases for such weak solutions.
The main idea in the proof of the existence part is based on suitable commutator estimates along with a careful cutting into low/high frequencies and inner/outer spatial scales to pass to the limit; while the proof of both the global regularity result and the eventual regularity for the supercritical diffusion are essentially based on the use of the so-called {\it{modulus of continuity method}}. 
\end{abstract}

\maketitle
\section{Introduction}
 In this article, we study the following generalized Surface Quasi-Geostrophic equation
\begin{equation}\label{eq:gSQG}
(gSQG)_{\beta} : \begin{cases}
  \partial_t \theta + u\cdot\nabla \theta + \nu \Lambda^\beta\theta = 0, &\quad (t,x)\in \mathbb{R}^+\times \mathbb{R}^2, \\
  u = \nabla^\perp \Lambda^{\beta-2} m(\Lambda)\theta, \\
  \theta(0,x)= \theta_0(x), & \quad x\in\mathbb{R}^2,
\end{cases}
\end{equation}
where $\nu\geq 0$ and $\beta\in (0,1]$. Recall that $\Lambda^\beta$ is the usual fractional Laplacian operator defined as
\[\Lambda^{\beta}\theta \equiv (-\Delta)^{\beta/2} \theta=C_\beta \,P.V.\int_{\mathbb{R}^{2}}{\frac{\theta(x)-\theta(x-y)}{|y|^{2+\beta}}\dd y}, \]
where $C_{\beta}>0$ is a positive constant. The operator $m(\Lambda)$ that appears in the velocity is defined using the Fourier transform via the following identity
\begin{equation*}
  m(\Lambda)f(x) =  \frac{1}{(2\pi)^2} \int_{\mathbb{R}^2} e^{i x\cdot \zeta}m(|\zeta|) \widehat{f}(\zeta) \mathrm{d}\zeta,
\end{equation*}
where $m$  satisfies the following conditions:
\begin{enumerate}[(\textrm{A}1)]
\item
$m(\zeta)=m(|\zeta|)$ is a radial nondecreasing function such that $m\in C^\infty(\mathbb{R}^2\setminus \{0\})$ and $m>0$ for all $\zeta\neq 0$.
\item
$m$ is in the Mikhlin-H\"ormander class (see e.g. \cite{Stein}), that is, there exists a constant $b_0>0$ so that
\begin{equation}\label{eq:cond3}
  \vert\zeta\vert^{k}|\partial_\zeta^km(\zeta)|\leq b_0\, m(|\zeta|), \qquad \forall k\in \{1,2,3,4,5\}, \forall \zeta\neq 0.
\end{equation}
\item
 There exists a constant $\alpha\in (0,1)$ and a constant $b_1\geq 1$ such that
\begin{equation}\label{eq:cond5}
  |\zeta|m'(|\zeta|)\leq \alpha\, m(|\zeta|), \qquad \forall\,|\zeta|\geq b_1.
\end{equation}
\item
There exists a constant $\lambda\in [0,1)$ and a constant $b_2\geq 1$ such that
\begin{equation}\label{eq:cond4}
  b_2^{-1} \leq \frac{m(\zeta)}{|\zeta|^\lambda} \leq b_2,\qquad \mathrm{for \ all} \ \zeta \in \R^{2}\setminus\{(0,0)\} \ \mathrm{such \ that} \ \vert\zeta\vert\leq 1.
\end{equation}
\end{enumerate}
Moreover, instead of (A3), we also consider the following more restrictive assumption \\
\noindent \textrm{(A5)} There exists a constant $\alpha\in (0,1)$ such that
\begin{equation}\label{eq:cond2}
  \vert\zeta \vert m'(\vert\zeta\vert)\leq \alpha\, m(\vert\zeta\vert), \qquad \mathrm{for \ all} \ \zeta \in \R^{2}\setminus\{(0,0)\}.
\end{equation}


If $\nu \geq 0$ and $m(\Lambda)=\Lambda^{1-\beta}$ (which obviously satisfies (A1)-(A5) with $\alpha=\lambda=1-\beta$), then the $(gSQG)_{\beta}$ equation reduces to the so-called Surface Quasi-Geostrophic equation. This equation reads as follows
\begin{equation}\label{qg}
\ (SQG)_{\beta}: \\ \left\{
\aligned
&\partial_{t}\theta(x,t)+u\cdot\nabla\theta+ \nu\Lambda^{\beta}\theta = 0,
\\
& u= \mathcal{R}^\perp \theta=(-\partial_{x_{2}} \Lambda^{-1}\theta,\partial_{x_{1}} \Lambda^{-1}\theta),
\\
& \theta(0,x)=\theta_{0}(x).
\endaligned
\right.
\end{equation}
The $(SQG)_\beta$ equation is an important model in geophysical sciences that serves for instance in  meteorology and atmospheric sciences as well as to understand turbulence flows (see e.g. \cite{Ped}). Besides, it turns out to be an important mathematical equation because of its similarity with the 3D Euler equations and the 3D Navier-Stokes equations (see \cite{CMT}). In the dissipative case $\nu>0$, due to the scaling of the equation, one may consider 3 cases that are $\beta \in (0,1)$, $\beta=1$ and $\beta \in (1,2)$ which are respectively called, supercritical, critical and subcritical. \\

The operator $m(\Lambda)$ in the equation $(gSQG)_\beta$ defined in \eqref{eq:gSQG} plays an important role. It would allows us to consider a larger class of $SQG$ equation with singular velocity. Indeed, by choosing an adequate $m$ we can allow the velocity to be $\gamma$-order singular ({\it{i.e.}} the symbol is of order $\gamma\in (0,1)$)
or $\gamma$-order regular ({\it{i.e.}} being of order $-\gamma\in (-1,0)$) or logarithmically $\gamma$-order singular/regular ({\it{i.e.}} being of order $\gamma$/$-\gamma$ up to a logarithmic factor).
This would make the study of the Cauchy problem \eqref{eq:gSQG}  much more delicate than the usual $(SQG)_\beta$ equation \eqref{qg}.
Equation $(gSQG)_\beta$ with general $m(\Lambda)$ and in the case $\nu>0$ and $\beta=1$ was initiated by Dabkowski, Kiselev and Vicol \cite{DKV} where the authors have studied a double logarithmically (0-order) singular velocity. This is equivalent to say that the equation has slight supercriticality in the velocity. 
In this paper, we shall basically follow this approach by considering a general family of multiplier operator $m(\Lambda)$ that gives rise to a logarithmically $\gamma$-order singular/regular velocity,
and we shall focus on the construction of global weak $L^{1}\cap L^2$ solutions in the inviscid case and then study their global regularity or eventual regularity in the viscous case, essentially critical and supercritical. 
\\

\noindent Let us first recall some well known results about the classical $(SQG)_\beta$ equation. 
After the first mathematical study initiated by Constantin, Majda and Tabak in \cite{CMT}, there is a huge mathematical literature dealing with the $(SQG)_{\beta}$ 
equation (one may see the long reference list in \cite{CCW,CCCGW}).
In the inviscid case $\nu=0$, $L^{2}$-weak solutions were shown to exist globally in Resnick's tesis \cite{Resnick}, then  Marchand \cite{Mar} was able to extend Resnick's result to a more general class of weak solutions, namely, he proved the existence of global weak $L^p$-solutions with $p>4/3$. While the uniqueness of Leray-Hopf solutions is still a challenging open problem,  Buckmaster, Shkoller and Vicol \cite{BSV} have been able to make an important step toward this problem by showing the nonuniqueness for a class of solutions having negative Sobolev regularity.
In the dissipative case, that is $\nu>0$ and $0<\beta\leq2$, the equation has been studied by many authors.
While in the subcritical case ($1<\beta\leq 2$), the global wellposedness is by now very well understood (see for instance, Resnick \cite{Resnick}, Constantin-Wu \cite{CW3}, Dong-Li \cite{DL}), the global regularity in the critical case ($\beta=1$) turned out to be more delicate to prove and has been successfully obtained slighlty more than a decade ago by Kiselev-Nazarov-Volberg \cite{KNV} and Caffarelli-Vasseur \cite{CV}; both proofs used different techniques and have appeared almost at the same time. The method used by Kiselev, Nazarov and Volberg is based on a preservation of a well chosen modulus of continuity while the Caffarelli and Vasseur proof is based on the De Giorgi iteration method.  Two other proofs of the global regularity have been obtained independently by Kiselev-Nazarov \cite{KN} using a duality approach and by Constantin-Vicol \cite{CVic} using what they called the {\it{nonlinear maximum principle}}. \\

In contrast with the subcritical and the critical case, the global regularity issue in the supercritical remains an outstanding open problem.
However, some authors  have been able to make a slight step toward this challenging problem. The first results in the supercritical regime were local existence for regular enough data and global existence for small initial data (see e.g. Chae-Lee \cite{ChaeLee}, Dong-Li \cite{DL2}, Hmidi-Keraani \cite{HK}, Ju \cite{Ju}, Chen-Miao-Zhang \cite{CMZ}, and Wu \cite{Wu}).
Then, some authors have been able to show that, despite of the fact that the drift term is stronger than the dissipation (at least for small scales),
there are still many regularity criteria which ensure that some class of solutions might become regular after a short period of time providing that some quantity is controlled.
For instance, Constantin and Wu \cite{CW2} were able to prove that if the velocity is $C^{1-\beta}$ then the solution becomes H\"older continuous for all $t>0$ (this result has been proved to be sharp by Silvestre, Vicol and Zlato\v{s} \cite{SVZ}). In a different paper, Constantin and Wu \cite{CW} proved that if the Leray-Hopf solution is in the subcritical space $C^\delta$, $\delta>1-\beta$ on a time interval $[t_1, t_2]$  then the solution becomes smooth on $(t_1,t_2]$. This last result has been extended by Silvestre \cite{Silv}(and also \cite{Silv2}) to any vector fields that are not necessarily divergence free.
Dong and Pavlovi\'c \cite{DP} have been able to prove the same result as \cite{CW} in a critical Besov space. \\

Another kind of results, called {\it{eventual regularity}}, which means that the weak solution becomes smooth after some time, have been established in the supercritical regime. 
More precisely, it was proved by Silvestre \cite{Silv3} that, for $\beta$ very close to 1, there exists a time so that the solution has some H\"older continuity after this time which together with \cite{CW} implies higher regularity and then smoothness. The {\it{eventual regularity}} result of \cite{Silv3}  has been improved by Dabkowski in  \cite{Dab} where he has been able to prove the same result for the whole supercritical case $\beta \in (0,1)$ by applying the duality method originated in \cite{KN}.   
Later, Kiselev \cite{Kis} reproved this result in the supercritical case by using the modulus of continuity method by showing the preservation of a well-chosen family of time-dependent moduli of continuity.
Such an {\it{eventual regularity}} result is also obtained by Coti Zelati and Vicol \cite{CZV}. Besides, they were able to show that the time of {\it{eventual regularity}} (which is explicit) goes to 0 as the dissipation approaches the critical case.
The authors in \cite{CZV} also proved that for all $\theta_0\in H^2$, the $(SQG)_\beta$ equation has a global classical solution when the dissipation index $\beta$ sufficiently close to $1$ (depending on the norm of $\theta_0$). \\

If $\nu\geq0$, $\beta \in (0,1]$ and  $m(\Lambda)=\mathrm{Id}$ (the identity operator),  then the $(gSQG)_{\beta}$ equation corresponds to the modified dissipative $(mSQG)_{\beta}$ equation introduced by Constantin, Iyer and Wu in \cite{CIW}. This modified equation interpolates between an equation that arises in the study of the evolution of a 2D damped invisicd fluid equation ($\beta=0$) and the classical $SQG$ equation ($\beta=1$);
The equation reads as follows
\begin{equation}\label{GSQG}
(mSQG)_{\beta} : \begin{cases}
  \partial_t \theta + u\cdot\nabla \theta + \nu \Lambda^\beta\theta = 0, &\quad (t,x)\in \mathbb{R}^+\times \mathbb{R}^2, \\
  u = \nabla^\perp \Lambda^{\beta-2} \theta, \\
  \theta(0,x)= \theta_0(x), & \quad x\in\mathbb{R}^2,
\end{cases}
\end{equation}
When $\beta \in (0,1]$ and $\nu>0$ this equation scales as the critical $SQG$ equation and global existence of global smooth solutions starting from $L^{2}$ data is proved (in the same spirit as \cite{CV}).
\\

For $\nu=0$ and for $m(\Lambda)= \Lambda^\alpha$ with $\alpha\in (1-\beta,1)$, $\beta\in (0,1]$ (noting that $m$ satisfies (A1)-(A5) with $\alpha=\lambda$), then the
$(gSQG)_\beta$ defined in \eqref{eq:gSQG} reduces to the inviscid generalized $SQG$ equation introduced by Chae, Constantin, C\'ordoba, Gancedo and Wu in \cite{CCCGW},
which reads as follows
\begin{equation}\label{GSQG}
\begin{cases}
  \partial_t \theta + u\cdot\nabla \theta  = 0, &\quad (t,x)\in \mathbb{R}^+\times \mathbb{R}^2, \\
  u = \nabla^\perp \Lambda^{\alpha+\beta-2} \theta, \\
  \theta(0,x)= \theta_0(x), & \quad x\in\mathbb{R}^2,
\end{cases}
\end{equation}
This equation has a singular velocity of order $\alpha+\beta-1\in(0,1)$, which makes the study much more delicate. Despite this fact, the authors of \cite{CCCGW} have been able to prove a local well-posedness result for initial data in $H^4$  as well as a global existence theorem of weak solutions for $\theta_0\in L^2(\mathbb{T}^2)$ with mean zero. This latter result is an improvement of Resnick's result \cite{Resnick}. To overcome the lack of regularity to close the estimates, the authors have used a new commutator estimate in terms of the stream function $\psi=\Lambda^{\beta-2}\theta$ which turned out to be crucial.
We also want to mention that, if one intends to construct Morrey-Campanato type solutions for instance, then, having a singular velocity may help since it would give more decay to the convolution kernel of the singular integral $u$ (see e.g. \cite{OL}).
So far, in the inviscid case or the supercritical dissipative case (i.e. $\nu>0$ and with dissipation $\nu\Lambda^\delta\theta$, $0<\delta<\beta$ on the left-hand side of \eqref{GSQG}),
the problem of the global regularity versus finite time blow-up remains widely open. \\

 The active scalar equation with a general Fourier multiplier in the velocities has been first studied by Chae, Constantin and Wu \cite{CCW}. They considered $u= \nabla^\perp \Lambda^{-2} m(\Lambda)\theta$ which corresponds to equation \eqref{eq:gSQG} in the case $\beta=0$. This equation may be viewed as a generalized inviscid Euler vorticity equation. The authors of \cite{CCW} proved the global well-posedness of strong solution for the so-called LogLog-Euler equation (see also \cite{Elg}). In the same spirit as \cite{CCW}, \cite{Tao}, Dabkowski, Kiselev and Vicol \cite{DKV} have studied the  $(gSQG)_\beta$ equation \eqref{eq:gSQG} in the case $\beta=1$, $\nu>0$. They considered the case of a singular velocity having a double logarithmically growth. They proved th
 e existence of a global unique strong solution by using the {\it{modulus of continuity method}}.
The second name author and Zheng \cite{XZ} have been able to improve this global result for the $(gSQG)_\beta$ equation in the case $\beta\in (0,1]$ and $\nu>0$ by showing that a logarithmically growth for the singular velocity is enough for the well-posedness.
We note that both proofs are for strong solutions with smooth initial data and are based on the construction of suitable stationary modulus of continuity.  \\

Our aim in this paper is to slightly go beyond the available existence results of weak solutions both in the inviscd case as well as in the supercritical case.
By studying equation $(gSQG)_{\beta}$ with $m$ satisfying (A1)-(A5), we will be able to improve several well-known theorems.

\section{Main Results and Comments}
The first result is the global existence of weak solutions for the inviscid $(gSQG)_\beta$ equation
\eqref{eq:gSQG}. 

\begin{theorem}\label{thm:GE}
  Let $\nu=0$, $\beta\in (0,1]$ and $\theta_0\in L^1\cap L^2(\R^2)$.  Assume that $m$ satisfies the assumptions (A1)-(A4) with $\alpha\in(0,1)$, $\lambda\in[0,1)$.
Then, the $(gSQG)_\beta$ equation \eqref{eq:gSQG} has a global weak solution which verifies, for all $T>0$, $\theta\in L^\infty([0,T]; L^1\cap L^2(\R^2))$.
\end{theorem}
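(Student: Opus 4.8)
The plan is to obtain the weak solution as a limit of smooth solutions of regularized problems, the whole difficulty being the control of the nonlinear term, for which the commutator estimates are the essential tool.

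\textbf{Approximate problems and uniform bounds.} For $\varepsilon\in(0,1)$ I would mollify the datum, $\theta_0^\varepsilon=\mathcal J_\varepsilon\theta_0\in\mathcal S(\R^2)$, and add an artificial dissipation, solving
\[
\partial_t\theta^\varepsilon+u^\varepsilon\cdot\nabla\theta^\varepsilon+\varepsilon\Lambda^2\theta^\varepsilon=0,\qquad u^\varepsilon=\nabla^\perp\Lambda^{\beta-2}m(\Lambda)\theta^\varepsilon,\qquad \theta^\varepsilon(0)=\theta_0^\varepsilon .
\]
Standard parabolic arguments (using that $u^\varepsilon$ is divergence free and only a fixed-order operator away from $\theta^\varepsilon$) produce a global smooth solution; since $\nabla\cdot u^\varepsilon=0$, every $L^p$ norm is non-increasing, so $\|\theta^\varepsilon(t)\|_{L^1\cap L^2(\R^2)}\le\|\theta_0\|_{L^1\cap L^2(\R^2)}$ for all $t\ge0$, uniformly in $\varepsilon$.

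\textbf{Compactness via commutator estimates.} Decompose the velocity by Littlewood--Paley into low/intermediate and high frequencies. For $P_{\le N}u^\varepsilon$: the Fourier support is compact and $|\widehat{\theta^\varepsilon}|\le\|\theta^\varepsilon\|_{L^1}$, while near the origin the symbol behaves like $|\zeta|^{\beta-1+\lambda}$ by (A4), which is locally square-integrable since $\beta+\lambda>0$; hence $P_{\le N}u^\varepsilon$ is bounded in every $H^k(\R^2)$ uniformly in $\varepsilon$ --- this is exactly where the $L^1$ hypothesis enters, taming the infrared singularity that is absent on $\mathbb T^2$ --- and is therefore precompact in $C^\infty_{\mathrm{loc}}$. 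For the high-frequency part one writes the nonlinear contribution $\int P_{>N}u^\varepsilon\,\theta^\varepsilon\cdot\nabla\varphi$ as a bilinear form in $\theta^\varepsilon$, equivalently in the stream-function variable $\psi^\varepsilon:=\Lambda^{\beta-2}m(\Lambda)\theta^\varepsilon$ with $u^\varepsilon=\nabla^\perp\psi^\varepsilon$, and applies a commutator estimate in the spirit of \cite{CCCGW}: symmetrizing in the two integration variables produces a kernel exhibiting a gain of regularity, and a further splitting into inner scales ($|x-y|\lesssim1$, where one uses the cancellations $\psi^\varepsilon(x)-\psi^\varepsilon(y)$ and $\nabla\psi^\varepsilon(x)-\nabla\psi^\varepsilon(y)$) and outer scales ($|x-y|\gtrsim1$, where the kernel already decays, using the Mikhlin--H\"ormander bounds (A2) and the growth bound (A3)) yields $|\int P_{>N}u^\varepsilon\,\theta^\varepsilon\cdot\nabla\varphi|\le C\|\theta_0\|_{L^1\cap L^2}^2\|\varphi\|_{W^{k_0,\infty}}$. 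Via the equation this bounds $\partial_t\theta^\varepsilon$ in $L^\infty([0,T];H^{-M}_{\mathrm{loc}}(\R^2))$ uniformly (the $\varepsilon\Lambda^2\theta^\varepsilon$ term being $O(\varepsilon)$ there). Combined with the uniform $L^\infty([0,T];L^2(\R^2))$ bound and the compact Rellich embedding $L^2(B_R)\hookrightarrow\hookrightarrow H^{-s}(B_R)$, the Aubin--Lions--Simon lemma gives a subsequence with $\theta^\varepsilon\to\theta$ strongly in $C([0,T];H^{-s}_{\mathrm{loc}}(\R^2))$ for every $s>0$ and $*$-weakly in $L^\infty([0,T];L^1\cap L^2(\R^2))$; lower semicontinuity of the norms gives $\theta\in L^\infty([0,T];L^1\cap L^2(\R^2))$ with the stated bound.

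\textbf{Passage to the limit.} In the (suitably reformulated) weak formulation the linear terms pass trivially and $\varepsilon\int_0^T\!\!\int\theta^\varepsilon\Lambda^2\varphi\to0$. The low/intermediate-frequency part $\int_0^T\!\!\int P_{\le N}u^\varepsilon\,\theta^\varepsilon\cdot\nabla\varphi$ converges by pairing the strongly ($C_{\mathrm{loc}}$) convergent $P_{\le N}u^\varepsilon$ against the weakly-$L^2_{\mathrm{loc}}$ convergent $\theta^\varepsilon$. For the high-frequency part one uses the commutator representation once more, observing it is continuous for the convergence actually available: since $\Lambda^{\beta-2}m(\Lambda)$ is smoothing of positive order by (A2)--(A4), $\psi^\varepsilon$ is bounded in a local Sobolev space of positive order and, interpolating with the strong $H^{-s}_{\mathrm{loc}}$-type convergence of $\theta^\varepsilon$, converges strongly in $H^r_{\mathrm{loc}}$ for a range of $r$ wide enough that the symmetrized bilinear form --- which by (A2)--(A4) demands strictly less regularity than that --- is continuous, so one may pass to the limit term by term. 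This identifies the nonlinear limit, shows $\theta$ solves $(gSQG)_\beta$ with $\nu=0$ in the weak sense, and recovers $\theta(0)=\theta_0$ from the strong $C_tH^{-s}_{\mathrm{loc}}$ convergence.

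\textbf{Main obstacle.} The crux is the possibly singular velocity: when $\alpha+\beta-1>0$ the operator $\nabla^\perp\Lambda^{\beta-2}m(\Lambda)$ has positive order, so $u^\varepsilon\theta^\varepsilon$ is not controlled by the available norms and naive weak--strong compactness fails; the commutator estimate together with the low/high-frequency and inner/outer-scale splittings is what simultaneously delivers the uniform time-regularity and the stability of the nonlinear term under the weak limit. The delicate bookkeeping is to verify that the regularity gained by $\Lambda^{\beta-2}m(\Lambda)$ --- quantified through (A2)--(A4), in particular using (A3) to bound the order of $m$ by $\alpha$ at high frequencies --- strictly beats the derivative demand of the symmetrized bilinear form, uniformly over the full range $\beta\in(0,1]$, $\alpha\in(0,1)$, $\lambda\in[0,1)$. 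The secondary issue, namely the infrared behaviour of the velocity on $\R^2$ (invisible on the torus), is handled precisely by the $L^1$ assumption, which renders the low-frequency velocity smooth and compact.
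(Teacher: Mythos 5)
Your proposal follows essentially the same route as the paper: vanishing-viscosity approximation with uniform $L^1\cap L^2$ bounds, a low/high-frequency splitting in which the high-frequency nonlinearity is rewritten through the stream function $\psi^\epsilon=\Lambda^{\beta-2}m(\Lambda)\theta^\epsilon$ as a symmetrized commutator, kernel and commutator estimates combined with spatial cutoffs to get uniform time-derivative bounds, and Ascoli/Aubin--Lions compactness to pass to the limit term by term. The only point you gloss over is the global solvability of the regularized system itself, which for a singular velocity is not quite ``standard parabolic theory'' and is handled in the paper by a separate modulus-of-continuity argument (Proposition \ref{prop:glob} and the appendix); otherwise the argument coincides with the paper's.
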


Then, we consider the $(gSQG)_\beta$ equation \eqref{eq:gSQG} with dissipation (without loss of generality, we may assume $\nu=1$).
We obtain a global regularity result of weak solutions in the logarithmically supercritical case as well as the eventual regularity of global weak solutions. 

\begin{theorem}\label{thm:RegLog}
  Let $\nu=1$, $\beta\in (0,1]$ and $\theta_0\in L^1\cap L^2(\R^2)$. 
\begin{enumerate}[(1)]
\item
Assume that $m$ satisfies (A1)-(A4) with $\alpha\in(0,1)$, $\lambda\in[0,1)$, then, if there exists $\mu\in [0,1]$ and $b_3\geq 1$ so that
\begin{equation}\label{mcd1}
  b_3^{-1} \leq m(r) \leq b_3 (\log r)^\mu,\qquad \forall r \geq b_3.
\end{equation}
Then, for all time $t_*>0$, the global weak solution $\theta$ constructed in Theorem \ref{thm:GE} satisfies $\theta\in C^\infty ([t_*,\infty)\times\R^2)$. \\

\item
Assume that $m$ satisfies (A1)(A2)(A4)(A5) with $\alpha\in(0,1)$, $\lambda\in[0,1)$.
Then for any $t'>0$, there exists $T_{*}>0$ which depends only on $t'$, $\|\theta_0\|_{L^2(\R^2)},\alpha,\beta$ so that the global weak solution $\theta$ constructed in Theorem \ref{thm:GE}
satisfies that $\theta\in C^\infty([t'+T_{*},\infty)\times\R^2)$.
In particular, if $\beta\in(0,1)$, and for each fixed $\theta_0\in L^2(\R^2)$ and $t'>0$, we have
\begin{equation}\label{T*}
\displaystyle \lim_{\alpha\rightarrow 0}T_{*}(t',\alpha,\beta,\theta_0)=0.
\end{equation}
\end{enumerate}
\end{theorem}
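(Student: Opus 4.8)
The plan is to derive all a priori bounds for the regularised solutions $\theta^{(\eta)}$ (mollified / frequency–truncated velocity together with an auxiliary $\eta\Delta$ dissipation) used to construct the weak solution in \THM{thm:GE}, uniformly in $\eta$, and then pass to the limit; the claimed smoothness is inherited. Recall that, uniformly in $\eta$, $\|\theta^{(\eta)}(t)\|_{L^p(\R^2)}\le\|\theta_0\|_{L^p(\R^2)}$ for $p\in\{1,2,\infty\}$. \emph{Step 1 ($L^2\to L^\infty$ smoothing).} The velocity $u=\nabla^\perp\Lambda^{\beta-2}m(\Lambda)\theta$ is divergence free and, since $\theta\in L^\infty_tL^2_x$ while $m(\Lambda)$ loses at most a power of a logarithm by (A2) and (A4), $u$ lies in $L^\infty_t$ of a space that is at worst logarithmically larger than $BMO$ (and bounded when $\beta<1$). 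A De Giorgi / Caffarelli–Vasseur energy argument on the level sets of $\theta^{(\eta)}$ (cf. \cite{CV}), adapted to absorb this logarithmic loss, then yields a quantitative bound $\|\theta^{(\eta)}(t)\|_{L^\infty(\R^2)}\le C\,t^{-1/\beta}\|\theta_0\|_{L^2(\R^2)}$, uniform in $\eta$; letting $\eta\to0$ gives $\theta(t)\in L^\infty(\R^2)$ for all $t>0$. Thus from any instant $t_0>0$ we may restart from data bounded by $C\,t_0^{-1/\beta}\|\theta_0\|_{L^2}$.

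\emph{Step 2 (modulus of continuity), part (1).} This is the core. Following Kiselev–Nazarov–Volberg, suppose $\theta^{(\eta)}(\cdot,t)$ has a strict modulus of continuity $\omega$ and evaluate $\tfrac{d}{dt}\big(\theta^{(\eta)}(\bar x,t)-\theta^{(\eta)}(\bar y,t)\big)$ at a hypothetical first breakthrough point $|\bar x-\bar y|=\xi$. The dissipation $-\Lambda^\beta\theta^{(\eta)}$ contributes a strictly negative term $-\mathcal D_\beta(\xi)$ controlled by the concavity of $\omega$, while the transport term is bounded by $\omega'(0)\,\Omega(\xi)$, where $\Omega$ is a modulus of continuity of $u$. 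The key lemma is the estimate of $\Omega$ in terms of $\omega$: writing the kernel of $\nabla^\perp\Lambda^{\beta-2}m(\Lambda)$ and using the Mikhlin bounds (A2) together with (A3)/(A4) (resp. (A5)/(A4) in part (2)) to control its size at every scale, one gets, schematically,
\[
\Omega(\xi)\ \lesssim\ \int_0^{\xi}\frac{\omega(r)}{r}\,\widetilde m\!\Big(\tfrac1r\Big)\,dr+\xi\int_{\xi}^{\infty}\frac{\omega(r)}{r^{2}}\,\widetilde m\!\Big(\tfrac1r\Big)\,dr,\qquad \widetilde m(s):=s^{\,\beta-1}m(s).
\]
It remains to exhibit a concave $\omega$, Lipschitz near the origin, with $\omega'(0)\,\Omega(\xi)+(\text{far--field transport})\le\mathcal D_\beta(\xi)$ for every $\xi>0$. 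In case (1), $\widetilde m$ grows at most logarithmically (indeed $\widetilde m(s)\lesssim s^{\beta-1}(\log s)^\mu$ with $\mu\le1$), so such a \emph{stationary} $\omega$ exists exactly as in the (log-)critical analyses of \cite{KNV}, \cite{DKV}, \cite{XZ}: the logarithmic loss coming from $m(\Lambda)$ is absorbed by the $\xi\log(1/\xi)$–type gain produced by $\Lambda^\beta$ near the diagonal (this is precisely where $\mu\le1$ enters). Since $(gSQG)_\beta$ is not exactly scale invariant, we replace the usual rescaling step by a one–parameter family $\{\omega_B\}_{B\gg1}$ of such moduli and pick $B$ large depending on $\|\theta(t_0)\|_{L^\infty}$. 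Passing to the limit, $\theta(\cdot,t)$ is Lipschitz for every $t>t_*$, for any $t_*>0$. \emph{Step 3 (bootstrap).} Once $\theta$ is Lipschitz in $x$ (locally uniformly for $t\ge t_*$), the drift $u$ is, by (A2), in $C^{1-\varepsilon}$ for each $\varepsilon>0$, so the equation is subcritical relative to the attained class and the Schauder / commutator bootstrap of Constantin–Wu \cite{CW}, Silvestre \cite{Silv} and Dong–Pavlovi\'c \cite{DP} upgrades $\theta$ to $C^\infty\big((t_*,\infty)\times\R^2\big)$, which proves part~(1).

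\emph{Part (2).} Steps~1 and~3 are unchanged; only Step~2 differs. Under (A5), $m(|\zeta|)\lesssim|\zeta|^\alpha$, so the drift has order $\beta-1+\alpha$, which is genuinely supercritical for $\Lambda^\beta$ when $\alpha>0$ and no stationary modulus is preserved. Following Kiselev \cite{Kis} and Coti Zelati–Vicol \cite{CZV}, one instead propagates a \emph{time–dependent} family $\omega(\cdot,t)$, required to satisfy the breakthrough inequality $\partial_t\omega(\xi,t)\ge \partial_\xi\omega(0,t)\,\Omega(\xi,t)+(\text{far--field transport})-\mathcal D_\beta(\xi,t)$, with $\Omega(\cdot,t)$ built from $\omega(\cdot,t)$ through the displayed velocity estimate (now with $\widetilde m(s)\lesssim s^{\beta-1+\alpha}$). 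Starting from a modulus compatible with the bound $\|\theta(t')\|_{L^\infty}\lesssim(t')^{-1/\beta}\|\theta_0\|_{L^2}$ from Step~1, the dissipation drives $\omega(\cdot,t)$ to a modulus Lipschitz at the origin after a finite time $T_*$, computable in terms of $\|\theta(t')\|_{L^\infty},\alpha,\beta$, hence in terms of $t',\|\theta_0\|_{L^2},\alpha,\beta$; then Step~3 yields $\theta\in C^\infty\big((t'+T_*,\infty)\times\R^2\big)$. Finally, for $\beta\in(0,1)$, letting $\alpha\to0$ sends the drift order $\beta-1+\alpha$ to $\beta-1<0$, i.e. towards the (sub)critical regime in which $T_*=0$ is admissible; tracking the dependence in the construction of $\omega(\cdot,t)$ gives $T_*(t',\alpha,\beta,\theta_0)\to0$, which is \EQ{T*}.

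\emph{Expected main obstacle.} The crux of both parts is Step~2, and within it the velocity modulus estimate: one must quantify precisely how much the singular multiplier $m(\Lambda)$ degrades the modulus of continuity of $u$ relative to that of $\theta$ — using the full strength of (A2) for the first five derivatives of the kernel, together with (A3)/(A5)/(A4) to pin down its size at every scale — and then design the stationary (resp. time–dependent) modulus so that its concave structure survives this logarithmic (resp. power-$\alpha$) loss, with every estimate uniform in the regularisation parameter $\eta$ so that it passes to the limit. The lack of exact scaling invariance of $(gSQG)_\beta$, which rules out the usual rescaling normalisation, is a further (more technical) complication, handled by working with families of moduli rather than a single rescaled one.
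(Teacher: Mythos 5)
Your overall architecture — De Giorgi $L^2\to L^\infty$ smoothing, the modulus-of-continuity method with the drift estimate $\Omega(\xi)\lesssim\int_0^\xi\frac{\omega(\eta)m(\eta^{-1})}{\eta^{\beta}}\,d\eta+\xi\int_\xi^\infty\frac{\omega(\eta)m(\eta^{-1})}{\eta^{1+\beta}}\,d\eta$, and a H\"older-to-smooth bootstrap — is the paper's architecture, and your treatment of part (2) (a time-dependent family of moduli whose collapse time gives $T_*$, with \eqref{T*} read off from the explicit constants) follows the paper's route in essence, modulo the unverified case analysis of the breakthrough inequality.

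There is, however, a genuine gap in your Step 2 for part (1). You propose to preserve a \emph{stationary} modulus $\omega_B$ "exactly as in \cite{KNV}, \cite{DKV}, \cite{XZ}", choosing $B$ large depending on $\|\theta(t_0)\|_{L^\infty}$. A preserved stationary modulus only propagates regularity, it does not create it: for $\theta^{\epsilon}(t_0)$ to obey $\omega_B$ with $\omega_B$ Lipschitz at the origin you need control of $\|\nabla\theta^{\epsilon}(t_0)\|_{L^\infty}$, not merely of $\|\theta^{\epsilon}(t_0)\|_{L^\infty}$; after Step 1 the approximate solutions are only uniformly bounded, and any $B(\epsilon)$ making the mollified data obey $\omega_{B(\epsilon)}$ blows up as $\epsilon\to0$, so no Lipschitz (or H\"older) bound survives the limit. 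This is precisely why \cite{DKV} and \cite{XZ} are results for strong solutions, and why the present paper's part (1) is not a corollary of them. The paper closes this gap by running the \emph{same} time-dependent construction as in part (2) also for part (1): it starts from $\omega(\xi,A_0)$ with $\omega(0+,A_0)>2\|\theta^\epsilon(t_*/2)\|_{L^\infty}$ — achievable uniformly in $\epsilon$ precisely because the growth condition \eqref{mcd1} makes $\int_\delta^{A_0}\frac{d\eta}{\eta\,m(\eta^{-1})}$ diverge as $\delta\to0$ (this is where $\mu\le1$ enters, not in absorbing a log into the dissipation) — so that the merely bounded solution trivially obeys the initial modulus, and then lets $\xi_0(t)\searrow0$ in a time $t_1\le t_*/4$ to arrive at the genuine modulus \eqref{moc1.2}, whence uniform $\dot C^\sigma$ regularity with $\sigma>\alpha$ (not Lipschitz) and Lemma \ref{lem:RC}. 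A second, smaller omission: under (A3) alone the monotonicity of $r\mapsto r^\rho m(r^{-1})$ and hence the key drift bound \eqref{Ome-es1-2} hold only for $\xi\le\frac{1}{2b_1}$, which forces the modulus in part (1) to be truncated (constant) at large scales as in \eqref{moc1.2}; your sketch does not account for this restriction.
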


The novelty of the results obtained in this paper are highlighted in the following. \\

The first result of this article, that is Theorem \ref{thm:GE}, is a global existence theorem of weak solutions for equation $(gSQG)_{\beta}$
driven by a family of (logarithmically) $\gamma$-order singular/regular velocities,
for example,
\begin{equation}
  \textrm{$u=\nabla^\perp \Lambda^{\alpha+\beta-2}\theta$,\quad or \quad  $u=\nabla^\perp \Lambda^{\alpha'+\beta-2}\ln(1+\Lambda)\theta$ \quad with $\beta\in (0,1]$, $0<\alpha'<\alpha<1$}.
\end{equation}
Recalling that Constantin, Chae, C\'ordoba, Gancedo and Wu  in \cite{CCCGW} obtained the existence of $L^{2}$ weak solutions for equation \eqref{GSQG} in the inviscid case and for
$u=\nabla^\perp \Lambda^{\beta-2}\theta$ with $\beta \in (1,2)$, we here extend this result to a more general class of (logarithmically) singular/regular velocities including the logarithmically supercritical cases. \\

The second result of this paper, which is the first point of Theorem \ref{thm:RegLog}, is a global regularity result of weak solutions 
for the $(gSQG)_\beta$ equation with $\beta\in (0,1]$ and under the mild growth condition \eqref{mcd1}, which corresponds to a logarithmically supercritical case.
This theorem generalizes the classical regularity result of Caffarelli-Vasseur \cite{CV} and Kiselev-Nazarov \cite{KN} to an even more singular velocity than the Riesz transform.
This result is also comparable to the one obtained in \cite{DKV} and \cite{XZ} where global well-posedness for $(gSQG)_\beta$ for the (double) logarithmically supercritical case is obtained for smooth solutions.
However, both results \cite{DKV},\cite{XZ} were dealing with strong solutions, and we here lower significatively the initial condition by considering just Leray-Hopf type weak solutions. \\

The third and last result of this paper states that weak solutions established in Theorem \ref{thm:GE} in the supercritical cases eventually become regular. Besides,
the time of eventual regularity is explicitly obtained. Consequently, we are able to get that, for $\beta\in(0,1)$, such a time goes to 0 as the supercritical index $\alpha$ goes to the critical index 0. 
The last point is  in the same spirit as the result of Coti Zelati and Vicol \cite{CZV}, which shows the continuity of the solution map associated to the $(gSQG)_\beta$ equation as $\alpha\rightarrow 0$.

\begin{remark}\label{rmk-CzV}
  If $\nu>0$, $\beta\in (0,1)$ and $m(\Lambda)=\D^{1-\beta}$, then, as we said before, the  $(gSQG)_\beta$ equation \eqref{eq:gSQG} becomes the supercritical $(SQG)_\beta$ equation \eqref{qg}. Following the idea of the proof of \eqref{T*},
one may prove that, given an initial data $\theta_0\in L^\infty(\R^2)$, the eventual regularity time $T_*$ converges to 0 as $\beta\rightarrow 1$, which allows us to recover a result of Coti Zelati and Vicol in \cite{CZV}.
Indeed, in this case we have $\alpha=1-\beta$ and $\beta$ is close to 1. Then, by choosing  $\sigma=\frac{1}{2}$, and  $C_1>0$ (the constant that appears in Remark \ref{rmk-moc}), and following the same steps as the proof of \eqref{eq:t1conv}
(one would actually use $\|\theta^\epsilon\|_{L^\infty([0,\infty)\times \R^2)} \leq \|\theta_0\|_{L^\infty}$ in stead of \eqref{LinfEst}),
one may find the following convergence result
\begin{equation*}
  T_* (\beta)= \frac{1}{C_0\beta} \left(\frac{4 C_0 m(1)}{\beta^3}(1-\beta) \|\theta_0\|_{L^\infty}\right)^{\frac{\beta}{1-\beta}}\rightarrow 0,\quad \textrm{as   }\beta\rightarrow1.
\end{equation*}
\end{remark}

\begin{remark}\label{rmk:assum}
  In both the proof of the global regularity result stated in Theorem \ref{thm:RegLog}  result as well as the eventual regularity, the main task is to show that the moduli of continuity verify the criterion \eqref{Targ3} (or \eqref{Targ4}) for all $0<\xi\leq A_0$ where $A_0=\xi_0(0)$ is the starting point of $\xi_0$.
The main difference is that $A_0$ in the eventual regularity issue  is not small in general, while $A_0$ in the global regularity problem can be chosen arbitrarily small,
so that we need a slightly restrictive assumption, namely (A5) instead of (A3), in the eventual regularity result.
We think that after a more complicated and careful analysis one might still prove the same eventual regularity result with the assumption (A3),
but that will not be our aim here.
\end{remark}

For the proof of Theorem \ref{thm:GE}, the idea is to first consider a regularized equation with an additional viscous term and then construct a global solution by compactness arguments.
The main problem is to show the convergence of the nonlinear transport terms in the sense of distribution.
By considering the stream function $\psi^\epsilon=\Lambda^{\beta-2}m(\Lambda)\theta^\epsilon$ and using the structure of the singular velocity,
we can rewrite the nonlinear term (when tested against a cut-off) as a controlled commutator, see \eqref{eq:Exp}.
We need to localise the considered quantities into low and high frequencies as \eqref{eq:decom2}, and since we work on the whole space $\R^2$,
we also introduce several cut-off functions to split the terms in \eqref{eq:decom2} with respect to the space variable. In particular,  we rewrite the term involving high-high interactions as \eqref{eq:Exp2} which crucially contains a commutator.
Then, we prove some useful commutator estimates. These estimates allow us to manage to prove the locally strong convergence of the high-frequency part of the (regularized) stream function $\mathcal{H}_0\psi^\epsilon$ and some functions involving $S_0\theta^\epsilon$ or $\mathcal{H}_0\psi^\epsilon$ in the locally $L^2$-topology, so that we can pass to the limit in each of those terms in the above decomposition. \\

As for the proof of Theorem \ref{thm:RegLog}, it is based on the so-called {\it{modulus of continuity method}} as it is stated in Kiselev's paper \cite{Kis} (originated from \cite{KNV}).
We first show a H\"older regularity criterion (Lemma \ref{lem:RCpres}) in terms of the modulus of continuity $\omega(\xi)$ given by \eqref{moc1}, which is well suited to the study of the $(gSQG)_\beta$ equation. Besides, this modulus of continuity is an unbounded function contrarily to the one introduced in \cite{Kis} (where, it is chosen to be constant for $\xi$ large enough, namely for all $\xi>\delta$, $\omega(\xi)=\omega(\delta)$).
Then, we introduce a new family of time-dependent moduli of continuity $\omega(\xi,\xi_0(t))$ defined by \eqref{moc2}-\eqref{moc3} and \eqref{xi0},
which is obtained as a suitable modification of the stationary modulus of continuity \eqref{moc1}.
Then, using some tedious (however elementary) computations, we can prove that the (uniformly in $\epsilon$) bounded solution $\theta^\epsilon$ would (uniformly in $\epsilon$)
preserve such moduli of continuity $\omega(\xi,\xi_0(t))$ for all time $t$ so that $\xi_0(t)>0$.
This implies that at some finite time the solution $\theta^\epsilon$ obeys the modulus of continuity $\omega(\xi,0+)=\omega(\xi)$ given by \eqref{moc1}.
Combined with the regularity criteria of Lemmas \ref{lem:RC} and \ref{lem:RCpres}, we get the desired eventual regularity result.
In this process, the time of eventual regularity has an explicit expression. This allows us to show the formula \eqref{T*} and conclude the global regularity result in the logarithmically supercritical case. \\

The outline of this paper is as follows. In the next section we give some preliminary results and some useful lemmas that we shall use throughout the paper.  The fourth section is devoted to the proof of Theorem \ref{thm:GE}. In the fifth section, we prove Theorem \ref{thm:RegLog}.
The last  section, which is an appendix, deals with the proof of a result used in the subsection \ref{sec:ws}.

\section{Auxiliary lemmas and modulus of continuity}\label{sec:prel}

In this section, we recall some well-known definitions and results and we prove several auxiliary lemmas. In particular, we collect some useful  dealing with the modulus of continuity in Subsection \ref{subsec:MOC}.

\subsection{Preliminary and auxiliary lemmas}

The following lemma will be frequently used throughout the article.
\begin{lemma}\label{lem:m}
  Assume that $m(r)$ $(r>0)$ is a smooth non-decreasing positive function satisfying \eqref{eq:cond5} for some $\alpha\in (0,1)$ and $b_1>0$.
Then for every $\rho\geq \alpha$,
\begin{equation}\label{m-fac}
  \textrm{the function  } r\mapsto r^\rho m(r^{-1}) \textrm{ for all $0<r\leq b_1^{-1}$ is non-decreasing},
\end{equation}
and
\begin{equation}\label{m-fac2}
  \textrm{the function  }|\zeta|\mapsto |\zeta|^{-\rho} m(|\zeta|) \textrm{ for all $|\zeta|\geq b_1$ is non-increasing}.
\end{equation}
In particular, if  \eqref{eq:cond2} is satisfied, then we have the following claim
\begin{equation}\label{m-fac3}
  \textrm{the properties \eqref{m-fac} and \eqref{m-fac2} hold for all $r>0$ and $|\zeta|>0$, respectively}.
\end{equation}
\end{lemma}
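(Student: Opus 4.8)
The plan is to reduce both claims to a single one‑variable monotonicity statement obtained by differentiation, and then to handle the boundary cases by a limiting/extension argument. For \eqref{m-fac2}, set $g(s) = s^{-\rho} m(s)$ for $s \geq b_1$. Since $m \in C^\infty(0,\infty)$ and $m>0$, $g$ is differentiable and
\[
  g'(s) = s^{-\rho-1}\bigl( s\, m'(s) - \rho\, m(s)\bigr).
\]
By hypothesis \eqref{eq:cond5}, for $s \geq b_1$ we have $s\, m'(s) \leq \alpha\, m(s) \leq \rho\, m(s)$ (using $\rho \geq \alpha$ and $m(s) > 0$), hence $g'(s) \leq 0$ on $[b_1,\infty)$. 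Therefore $g$ is non-increasing there, which is exactly \eqref{m-fac2}. For \eqref{m-fac}, I would apply a change of variables: put $r = s^{-1}$, so that the map $r \mapsto r^\rho m(r^{-1})$ for $0 < r \leq b_1^{-1}$ is the map $s \mapsto s^{-\rho} m(s)$ for $s \geq b_1$ read backwards; since the latter is non-increasing in $s$ and $s = r^{-1}$ is a decreasing function of $r$, the composition $r \mapsto r^\rho m(r^{-1})$ is non-decreasing in $r$. (Equivalently, one can differentiate $h(r) = r^\rho m(r^{-1})$ directly and get $h'(r) = r^{\rho-1}\bigl(\rho\, m(r^{-1}) - r^{-1} m'(r^{-1})\bigr) \geq 0$ by the same inequality with $s = r^{-1} \geq b_1$.)

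**The claim under \eqref{eq:cond2}.** For \eqref{m-fac3}, assumption \eqref{eq:cond2} says precisely that $|\zeta|\, m'(|\zeta|) \leq \alpha\, m(|\zeta|)$ for \emph{all} $\zeta \neq 0$, i.e. with $b_1$ effectively replaced by $0^+$. The computation above for $g'$ and $h'$ used only the pointwise bound $s\,m'(s) \leq \alpha\, m(s)$ at the relevant points and $m > 0$; since both now hold for every $s > 0$, we conclude $g$ is non-increasing on all of $(0,\infty)$ and $h$ is non-decreasing on all of $(0,\infty)$. There is nothing delicate here beyond noticing that the proof of the first two parts localizes pointwise and so extends verbatim once the hypothesis is upgraded to hold everywhere.

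**Main obstacle.** This lemma is essentially a routine ODE/monotonicity computation, so there is no genuinely hard step; the only point requiring a little care is bookkeeping the inequality $\alpha \leq \rho$ to pass from \eqref{eq:cond5} (which has the sharp constant $\alpha$) to the weaker bound $s\,m'(s) \leq \rho\, m(s)$ needed for $g' \leq 0$, and making sure the change of variables $r \leftrightarrow r^{-1}$ correctly sends the interval $\{0 < r \leq b_1^{-1}\}$ to $\{s \geq b_1\}$ and reverses the sense of monotonicity. I would also remark that strict positivity of $m$ is what guarantees $g$ and $h$ are well-defined and smooth away from the excluded point, so no degeneracy issues arise. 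With these in place the three displays follow immediately.
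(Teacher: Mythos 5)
Your proof is correct and follows essentially the same route as the paper, which also differentiates $r\mapsto r^\rho m(r^{-1})$ directly and bounds $(r^\rho m(r^{-1}))'\geq(\rho-\alpha)r^{\rho-1}m(r^{-1})\geq 0$ using \eqref{eq:cond5}, then notes that \eqref{m-fac2} and \eqref{m-fac3} follow similarly. Your additional change-of-variables remark linking \eqref{m-fac} and \eqref{m-fac2} is a harmless reformulation of the same computation.
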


\begin{proof}[{\bf Proof of Lemma \ref{lem:m}}]
  From \eqref{eq:cond5}, we directly have that for all $0<r<b_1^{-1}$,
$$(r^\rho m(r^{-1}))'= \rho r^{\rho-1} m(r^{-1}) - r^{\rho-2} m'(r^{-1})\geq (\rho-\alpha) r^{\rho-1} m(r^{-1})\geq 0,$$
which leads to \eqref{m-fac}. The fact \eqref{m-fac2} and \eqref{m-fac3} can be similarly proved.
\end{proof}

Now we recall the definition of the dyadic blocks (see e.g. \cite{BCD11} or \cite{PGLR}).
Let $\widetilde{\chi} \in \mathcal{D}(\mathbb R^2)$ be a non negative function such that $\widetilde{\chi}(x)=1$ if $\vert x \vert \leq 1/2$ and $0$ if $\vert x \vert \geq 1$. Let us define another function $\widetilde{\varphi} \in \mathcal{D}(\mathbb R^2)$ by $\widetilde{\varphi}(x)=\widetilde{\chi}(x/2)-\widetilde{\chi}(x)$ which is therefore supported on a corona.
Then, we define the Fourier multiplier $S_j$ and $\Delta_j$ ($j\in\mathbb{Z}$) by
$$
\widehat{S_{j} f}(\zeta)=\widetilde{\chi}(2^{-j}\zeta) \widehat{ f}(\zeta) \ \ \text{and} \ \  \widehat{\Delta_{j} f}(\zeta)=\widetilde{\varphi}(2^{-j}\zeta) \widehat{ f} (\zeta).
$$
\noindent From these operators we deduce the Littlewood-Paley decomposition of a distribution  $f\in \mathcal{S}'$, that is, for all $N\in \mathbb Z$, we have
$$
f=S_N f+ \sum_{j\geq N} \Delta_j f \quad \text{in} \ \mathcal{S}'(\mathbb{R}^{2}).
$$
We then define the low frequency and high frequency cutting operators as
\begin{equation}\label{eq:lowLP}
S_j f=\mathcal{F}^{-1}\left(\widetilde{\chi}(2^{-j}\zeta) \right)= \sum_{k \leq j-1} \Delta_{k}f
\end{equation}
as well as the high frequency cutting operator as
\begin{equation}\label{eq:LPop}
  \mathcal{H}_{j} f=(\mathrm{Id}-S_{j})f=\sum_{k \geq j} \Delta_{k}f.
\end{equation}
From these operator we define, for $s \in \mathbb{R}$ and $(p,q) \in [1,\infty]^2$ the inhomogeneous Besov spaces as the set of $f \in \mathcal{S}'(\R^2)$ so that the following quantity is finite
$$
\Vert f \Vert_{B^{s}_{p,q}(\R^2)}\equiv \|S_0f\|_{L^p} + \left\|\{2^{js} \|\Delta_j f\|_{L^p}\}_{j\in\mathbb{N}} \right\|_{\ell^q}. 
$$
In particular, we have the equivalence of $L^2$-based Sobolev space $H^s(\R^2) = B^s_{2,2}(\R^2)$. \\

The following lemma deals with the action of the Fourier multiplier $m(\Lambda)$ into the dyadic blocs.
\begin{lemma}\label{lem:mBern}
  Let $m$ be a function satisfying (A1)(A2) (A3). Then, there exists a constant $C>0$ depending only on $b_0$, $m(b_1)$ such that for every $p\in [1,\infty]$ and $j\in\mathbb{N}$, we have
\begin{equation}\label{eq:mBern}
  \|m(\Lambda)\Delta_j f\|_{L^p(\R^2)} \leq C 2^{j \alpha} \|\Delta_j f\|_{L^p(\R^2)}.
\end{equation}
\end{lemma}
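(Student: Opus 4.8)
The plan is to decompose $m(\Lambda)\Delta_j f$ using the fact that $\Delta_j$ is a Fourier projector onto a corona $\{|\zeta|\sim 2^j\}$, and to realize $m(\Lambda)$ on that corona as convolution with an $L^1$ kernel whose norm is controlled by $2^{j\alpha}$. First I would introduce a slightly fattened cutoff: let $\widetilde\psi\in\mathcal D(\R^2\setminus\{0\})$ be supported on an annulus $\{c^{-1}\le|\zeta|\le c\}$ with $\widetilde\psi\equiv 1$ on the support of $\widetilde\varphi$, so that $\Delta_j f = \widetilde\psi(2^{-j}\cdot)(\Lambda)\Delta_j f$ (i.e. the multiplier $\widetilde\psi(2^{-j}\zeta)$ acts as the identity on frequencies of $\Delta_j f$). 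Then
\[
  m(\Lambda)\Delta_j f = K_j * \Delta_j f, \qquad K_j(x) = \frac{1}{(2\pi)^2}\int_{\R^2} e^{ix\cdot\zeta}\, m(|\zeta|)\,\widetilde\psi(2^{-j}\zeta)\, d\zeta,
\]
and by Young's inequality it suffices to show $\|K_j\|_{L^1(\R^2)}\le C\,2^{j\alpha}$ with $C$ depending only on $b_0$ and $m(b_1)$.

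The core estimate is the bound on $\|K_j\|_{L^1}$. Writing $\zeta = 2^j\eta$ gives $K_j(x) = \frac{2^{2j}}{(2\pi)^2}\int e^{i2^j x\cdot\eta} m(2^j|\eta|)\widetilde\psi(\eta)\, d\eta$, so after rescaling $x$ it is enough to bound, uniformly in $j\in\mathbb N$, the $L^1$ norm (in the rescaled variable) of the inverse Fourier transform of $\eta\mapsto m(2^j|\eta|)\widetilde\psi(\eta)$, times the normalizing factor $2^{j\alpha}$. On the annular support of $\widetilde\psi$ we have $|\eta|\sim 1$, hence $2^j|\eta|\gtrsim 2^j \ge b_1$ for $j$ large (and the finitely many remaining $j$ are handled trivially, absorbing constants into $C$ via $m(b_1)$), so the Mikhlin bound \eqref{eq:cond3} applies: for $k\le 5$,
\[
  \bigl|\partial_\eta^k\bigl(m(2^j|\eta|)\bigr)\bigr| \lesssim b_0\, m(2^j|\eta|) \lesssim b_0\, 2^{j\alpha}\, m(|\eta|)\lesssim b_0\, 2^{j\alpha},
\]
where the middle inequality uses \eqref{m-fac2} from Lemma~\ref{lem:m} (the map $|\zeta|\mapsto|\zeta|^{-\alpha}m(|\zeta|)$ is non-increasing for $|\zeta|\ge b_1$, so $m(2^j|\eta|)\le (2^j|\eta|)^\alpha |\eta|^{-\alpha} m(|\eta|)\lesssim 2^{j\alpha}$ on $|\eta|\sim 1$), together with \eqref{eq:cond5}; the derivatives of $\widetilde\psi$ are harmless since $\widetilde\psi$ is fixed. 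Thus $\eta\mapsto 2^{-j\alpha} m(2^j|\eta|)\widetilde\psi(\eta)$ is a smooth, compactly supported function with all derivatives up to order $5$ bounded uniformly in $j$; by integration by parts (using $\lesssim 3$ derivatives to gain decay $(1+|z|)^{-3}$, which is $L^1(\R^2)$) its inverse Fourier transform has $L^1$ norm bounded uniformly in $j$. Undoing the scaling (the dilation $z\mapsto 2^j z$ is $L^1$-norm preserving after the Jacobian cancels the $2^{2j}$) yields $\|K_j\|_{L^1}\le C\,2^{j\alpha}$, and Young's inequality $\|K_j*\Delta_j f\|_{L^p}\le \|K_j\|_{L^1}\|\Delta_j f\|_{L^p}$ finishes the proof.

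The main obstacle is bookkeeping rather than conceptual: one must (i) carefully justify that $m(\Lambda)$ can be restricted to an annulus without changing its action on $\Delta_j f$, (ii) handle the finitely many small $j$ (where $2^j\not\ge b_1$) separately — here $m$ restricted to the corona $|\zeta|\sim 2^j$ is smooth, bounded, and bounded below by positive constants depending on $m(b_1)$, so the corresponding kernels have $L^1$ norm $O(1)\le C\,2^{j\alpha}$ — and (iii) track that the constant $C$ depends only on $b_0$ and $m(b_1)$ (and the fixed cutoff $\widetilde\psi$), as asserted. The one genuinely substantive input is the interplay of the Mikhlin condition \eqref{eq:cond3} with the growth control \eqref{eq:cond5}: the former keeps the symbol's oscillation under control so that its Fourier transform is integrable, while the latter, via Lemma~\ref{lem:m}, pins the size $m(2^j|\eta|)$ to the gain $2^{j\alpha}$ on the relevant annulus.
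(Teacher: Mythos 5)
Your proof is correct and follows essentially the same route as the paper: the paper establishes the symbol bounds $|\partial^k m(\zeta)|\le 2b_0\, m(b_1)\,|\zeta|^{\alpha-k}$ for $|\zeta|\ge 1/2$ (using (A2) together with \eqref{m-fac2}) and then invokes Lemma 2.2, p.~53 of \cite{BCD11}, whereas you reprove that cited lemma from scratch via the standard rescaled-kernel and integration-by-parts argument. The only nitpick is that when bounding $m(2^j|\eta|)$ you should compare with $b_1$ rather than with $|\eta|$ (since $|\eta|\sim 1$ need not exceed $b_1$), i.e.\ use $m(2^j|\eta|)\le (2^j|\eta|)^{\alpha} b_1^{-\alpha} m(b_1)$ from \eqref{m-fac2}, which is exactly what produces the stated dependence of $C$ on $m(b_1)$.
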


\begin{proof}[{\bf Proof of Lemma \ref{lem:mBern}}]
Note that from \eqref{m-fac2} and the nondecreasing property of $m$, we have for all $|\zeta|\geq 1/2$,
$$ m(\zeta) \leq \max\{m(b_1), b_1^{-\alpha}m(b_1) \vert\zeta\vert^\alpha\}\leq 2^\alpha m(b_1) |\zeta|^\alpha ,$$
thus thanks to the assumption (A2), for all $\zeta\in \R^2\setminus B_{1/2}$ we find
$$ \vert\partial^km(\zeta)\vert \leq b_0 |\zeta|^{-k}m(\zeta) \leq 2 b_0 m(b_1) \vert\zeta\vert^{\alpha-k},\quad \forall k\in\{1,2,3,4\}, $$
so that we may apply  Lemma 2.2, p. 53 of \cite{BCD11} to immediately obtain \eqref{eq:mBern}.
\end{proof}

The purpose of the following lemma is to estimate the convolution kernels of some operators involving $m(\Lambda)$.
\begin{lemma}\label{lem:kernel}
Let $\beta\in(0,1]$ and $m(\zeta)=m(|\zeta|)$ be a non-decreasing function satisfying the assumptions (A1)-(A4).
\begin{enumerate}[(1)]
\item
Let $K_{\beta,j}(x)$ be the kernel of the operator $\partial_j \D^{\beta-2}m(\Lambda)$ ($j=1,2$). Then for all $x\in\mathbb{R}^2\setminus\{0\}$,
\begin{equation}\label{kernel1}
  K_{\beta,j}(x)= \frac{x_j}{|x|} H_\beta(x), \quad \textrm{with}\quad |H_\beta(x)|\leq C_0' |x|^{-1-\beta}m(|x|^{-1}),
\end{equation}
and
\begin{equation}\label{kernel2}
  |\nabla K_{\beta,j}(x)|\leq C_0' |x|^{-2-\beta} m(|x|^{-1}),
\end{equation}
where the constant $C_0'>0$ depends on $b_1,b_2$ but depends neither on $\alpha$ nor $\beta$.
\item
Let $\widetilde{K}_{\beta,j}(x)$ be the kernel of the operator $\frac{\D^{2-\beta}\partial_j}{m(\Lambda)}$ ($j=1,2$). Then for every $x\in\mathbb{R}^2\setminus\{0\}$,
\begin{equation}\label{kernel3}
  \widetilde{K}_{\beta,j}(x)= \frac{x_j}{|x|} \widetilde{H}_\beta(x), \quad \textrm{with}\quad |\widetilde{H}_\beta(x)|\leq C_0+ \frac{C}{|x|^{5-\beta}m(|x|^{-1})},
\end{equation}
and
\begin{equation}\label{kernel4}
  |\nabla \widetilde{K}_{\beta,j}(x)|\leq C_0' + \frac{C}{ |x|^{6-\beta} m(|x|^{-1})},
\end{equation}
where $C>0$ is a constant depending on $\alpha$ and $\beta$.
\end{enumerate}
\end{lemma}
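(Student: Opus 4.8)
The plan is to compute the convolution kernels explicitly via the Fourier inversion formula combined with a Littlewood--Paley decomposition adapted to the behaviour of $m$ at low and high frequencies. For part (1), the operator $\partial_j \Lambda^{\beta-2}m(\Lambda)$ has Fourier symbol $i\zeta_j |\zeta|^{\beta-2}m(|\zeta|)$, which is homogeneous of degree $\beta-1$ modulo the factor $m$. The symbol is odd in $\zeta$, hence the kernel is odd, which forces the representation $K_{\beta,j}(x) = \frac{x_j}{|x|}H_\beta(x)$ with $H_\beta$ radial; this is a standard consequence of the rotational/reflection symmetry of the symbol (it is $i$ times an odd scalar symbol times $\zeta_j/|\zeta|$ after writing $\zeta_j = |\zeta|\cdot \zeta_j/|\zeta|$, up to interchanging the roles of $x$ and $\zeta$). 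The substance is the size estimate, for which I would split $K_{\beta,j} = \sum_{k\in\mathbb{Z}}\widetilde\Delta_k K_{\beta,j}$ where $\widetilde\Delta_k$ is a slightly fattened dyadic block, and estimate each piece. On the annulus $|\zeta|\sim 2^k$, the symbol has size $\lesssim 2^{k(\beta-1)}m(2^k)$, and each of its derivatives up to order (say) $4$ gains a factor $2^{-k}$, using (A2) to control derivatives of $m$ by $m$ itself. Standard nonstationary-phase / integration-by-parts arguments then give $|\widetilde\Delta_k K_{\beta,j}(x)|\lesssim 2^{2k}\cdot 2^{k(\beta-1)}m(2^k)\,(1+2^k|x|)^{-N}$ for any $N$. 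Summing over $k$, the sum is dominated by the term with $2^k\sim |x|^{-1}$, giving $|K_{\beta,j}(x)| \lesssim |x|^{-1-\beta}m(|x|^{-1})$; here one uses the monotonicity facts \eqref{m-fac}--\eqref{m-fac2} from \LEM{lem:m} (valid for $\rho = 1 \geq \alpha$ and $\rho = 1+\beta \geq \alpha$ respectively, plus the low-frequency bound (A4)) to check that the geometric-type series converges and is controlled by its ``balanced'' term, with a constant independent of $\alpha,\beta$. The gradient estimate \eqref{kernel2} follows from the same computation with one extra factor of $2^k$ from differentiating in $x$, so the balanced term becomes $|x|^{-2-\beta}m(|x|^{-1})$.

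For part (2), the operator $\frac{\Lambda^{2-\beta}\partial_j}{m(\Lambda)}$ has symbol $i\zeta_j|\zeta|^{2-\beta}/m(|\zeta|)$; again it is odd, giving the structural form $\widetilde K_{\beta,j}(x) = \frac{x_j}{|x|}\widetilde H_\beta(x)$. Now, however, the symbol is \emph{not} integrable-decaying at low frequencies — $1/m$ grows (or stays bounded) as $|\zeta|\to 0$, and the factor $|\zeta|^{2-\beta}$ vanishes there with $2-\beta\in[1,2)$ — so the kernel is not globally of Schwartz decay. This is why the bound in \eqref{kernel3}--\eqref{kernel4} carries an additive constant $C_0$ (resp. $C_0'$) reflecting the ``low-frequency residue'' plus a term reflecting the genuine singular behaviour near $x=0$. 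Concretely I would again decompose dyadically: for the high-frequency blocks $2^k\gtrsim 1$, use (A3)/(A5) and \LEM{lem:m} (\eqref{m-fac2} with the roles reversed, i.e. $|\zeta|^{-\rho}m(|\zeta|)$ non-increasing, so $1/m(|\zeta|)\lesssim |\zeta|^{-\alpha}/m(1)$ is \emph{not} quite what I want — rather $1/m$ grows at most polynomially, precisely $1/m(r)\gtrsim r^{-\alpha}$ giving $\Lambda^{2-\beta}/m(\Lambda)$ a symbol of order $\le 2-\beta$), to get the block-size $\lesssim 2^{k(3-\beta)}/m(2^k)$ with rapid decay in $2^k|x|$; summing, the balanced term at $2^k\sim|x|^{-1}$ gives $|x|^{-(3-\beta)}\cdot|x|^{-2}\cdot\frac{1}{m(|x|^{-1})}$... wait, more carefully the spatial prefactor is $2^{2k}$ from the $\mathbb{R}^2$ volume, so the block size is $\lesssim 2^{2k}2^{k(2-\beta)}2^{k}/m(2^k) = 2^{k(5-\beta)}/m(2^k)$ (the extra $2^k$ is the $\zeta_j$), balanced at $2^k\sim|x|^{-1}$ to $|x|^{-(5-\beta)}/m(|x|^{-1})$. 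For the low-frequency blocks $2^k\lesssim 1$, use (A4) to write $m(|\zeta|)\sim|\zeta|^\lambda$, so the symbol is $\sim \zeta_j|\zeta|^{2-\beta-\lambda}$ with $2-\beta-\lambda$ of order between... one checks $2-\beta-\lambda > 0$, so this low-frequency part of the symbol is bounded and actually vanishing at $0$; summing these blocks gives a bounded, smooth contribution to the kernel, which is the source of the $C_0$ term. The gradient bound \eqref{kernel4} is obtained identically with one more power of $2^k$ in each block.

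The main obstacle — the step I expect to take real care — is the bookkeeping in part (2), namely disentangling the two distinct mechanisms (the honest singularity of the kernel near $x = 0$, which produces the $|x|^{-(5-\beta)}/m(|x|^{-1})$ term, versus the low-frequency tail of the symbol, which produces the bounded $C_0$ term) and verifying that \emph{all} the exponents line up so that the dyadic sums converge to the balanced term in each regime. In particular one must check $5-\beta > 0$ (trivial) and, more delicately, that $2-\beta-\lambda>0$ so that the low-frequency blocks sum to something bounded rather than growing — and track that the constants in the high-frequency part genuinely depend on $\alpha$ (through how fast $1/m$ is allowed to grow, via (A3)) and $\beta$, as the statement asserts, while the constants $C_0,C_0'$ in part (1) and the low-frequency contributions depend only on $b_1,b_2$. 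A secondary technical point is justifying the pointwise kernel identities rigorously (the symbols are not in $L^1$, so ``kernel of the operator'' must be understood distributionally, or one regularizes by a high-frequency cutoff, derives the estimates uniformly, and passes to the limit); this is routine but should be mentioned. Everything else — the odd-symbol $\Rightarrow$ $\frac{x_j}{|x|}(\text{radial})$ reduction, the per-block nonstationary phase bounds, and the invocation of \LEM{lem:m} to sum the series — is standard Littlewood--Paley technology.
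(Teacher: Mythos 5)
Your proposal is correct in substance and would prove the lemma, but it takes a different decomposition from the paper's. The paper does not dyadically decompose: for part (2) it rotates so that $x=|x|e_1$, introduces a single smooth cutoff $\phi_R$ at frequency scale $R$, estimates the low-frequency piece $\int e^{i|x|\zeta_1}\phi_R(\zeta)\frac{|\zeta|^{2-\beta}}{m(\zeta)}\,d\zeta$ directly in $L^1$ of the symbol (using (A4) on $|\zeta|\le 1$ and Lemma~\ref{lem:m} on $b_1\le|\zeta|\le R$, which is exactly your ``sum up to the balanced block'' computation collapsed into one integral), integrates by parts five times in $\zeta_1$ on the $(1-\phi_R)$ piece to produce the $|x|^{-5}$ gain, and then sets $R=|x|^{-1}$; the structural form $\widetilde K_{\beta,j}=\frac{x_j}{|x|}\widetilde H_\beta$ comes from writing $\widetilde K_{\beta,j}=\partial_j\widetilde L_\beta$ with $\widetilde L_\beta$ radial, not from oddness alone. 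Part (1) is simply cited from \cite{DKV}. Your Littlewood--Paley route buys a more systematic bookkeeping (the ``balanced term dominates'' heuristic is transparent, and the additive $C_0$ in \eqref{kernel3} falls out cleanly as the sum of the low-frequency blocks), at the cost of having to track per-block constants; the paper's single-cutoff version is more elementary and makes the explicit dependence of constants on $b_0,b_1,b_2$ (and the independence of $C_0'$ from $\alpha,\beta$ in part (1)) easier to read off. Both arguments rest on the same three ingredients: (A2) for five derivatives of the symbol, (A4) near $\zeta=0$, and the monotonicity facts \eqref{m-fac}--\eqref{m-fac2}.

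Two points you should tighten. First, ``odd symbol $\Rightarrow$ kernel of the form $\frac{x_j}{|x|}\times(\text{radial})$'' is not literally true; oddness only gives an odd kernel. The correct reduction is that the symbol is $\zeta_j$ times a radial function, so the kernel is $\partial_j$ of a radial distribution, which is $\frac{x_j}{|x|}$ times the radial derivative --- your parenthetical gestures at this, but it is the radiality of the remaining factor, not the parity, that does the work. Second, in your part (2) high-frequency sum the passage about $1/m$ is garbled: above the balanced frequency $2^{k_0}\sim|x|^{-1}$ you only need $1/m(2^k)\le 1/m(2^{k_0})$ from monotonicity of $m$, while between $|\zeta|\sim 1$ and the balanced frequency you need $r^{\alpha}/m(r)$ non-decreasing (the reciprocal form of \eqref{m-fac2}) to pull out $|x|^{-\alpha}/m(|x|^{-1})$ and sum the remaining geometric series in $2^{k(5-\beta-\alpha)}$; your final exponents are right, but this is where the $\alpha$-dependence of the constant $C$ in \eqref{kernel3}--\eqref{kernel4} enters and it deserves an explicit line. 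With those two repairs the argument is complete, including your (correct) observations that the low-frequency blocks need only $5-\beta-\lambda>0$ and that the kernel identities must be understood distributionally or via a regularization passed to the limit.
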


\begin{proof}[{\bf{Proof of Lemma \ref{lem:kernel}}}]

(1) For the proof of \eqref{kernel1}-\eqref{kernel2}, one can refer to that of \cite{DKV} (Lemma 4.1), or, as well, it may be seen in the proof of \eqref{kernel3} (see below).
The constant $C_0'$ does not depends on $\alpha$ nor $\beta$, which will be easy to see since these constants will be explicit in the proof. \\

(2) Let $\phi$ be a smooth radial function which is supported on $[-1,1]^{2}$ and which satisfies $\phi(x)=1$ on $[-1/2,1/2]^{2}$.
Let us set $\phi_R(x) = \phi(\frac{x}{R})$ for $R>0$, and $\widetilde{L}_\beta(x)$ be the kernel
of the convolution operator $\frac{\D^{2-\beta}}{m(\Lambda)}$, then for some $R>0$ to be chosen later, we write
\begin{equation*}
\begin{split}
  \widetilde{L}_\beta(x) & = C_0 \int_{\mathbb{R}^2}e^{i x\cdot \zeta} \frac{|\zeta|^{2-\beta}}{m(|\zeta|)}{d}\zeta
  = C_0 \int_{\mathbb{R}^2}e^{i |x| \zeta_1} \frac{|\zeta|^{2-\beta}}{m(|\zeta|)}\mathrm{d}\zeta \\
  & = C_0 \int_{\mathbb{R}^2}e^{i |x|\zeta_1} \phi_R(\zeta)\frac{|\zeta|^{2-\beta}}{m(\zeta)} {d}\zeta
  + C_0 \int_{\mathbb{R}^2}e^{i |x|\zeta_1} (1-\phi_R(\zeta))\frac{|\zeta|^{2-\beta}}{m(\zeta)} {d}\zeta \\
  & = C_0 \int_{\mathbb{R}^2}e^{i |x|\zeta_1} \phi_R(\zeta)\frac{|\zeta|^{2-\beta}}{m(\zeta)} {d}\zeta
  +  C_0 |x|^{-5} \int_{\mathbb{R}^2}e^{i |x|\zeta_1} \,\partial_{\zeta_1}^5\left((1-\phi_R(\zeta))\frac{|\zeta|^{2-\beta}}{m(\zeta)}\right) \mathrm{d}\zeta 
\end{split}
\end{equation*}
where $C_0>0$ is a fixed constant. Thus, we see that
$\widetilde{K}_{\beta,i}(x)=\partial_j \widetilde{L}_\beta(x) = \frac{x_j}{|x|} \widetilde{H}_\beta(x)$
with
\begin{equation*}
\begin{split}
  \widetilde{H}_\beta(x) = &\, C_0 \int_{\mathbb{R}^2}e^{i |x|\zeta_1}\, i\zeta_1\phi_R(\zeta)\frac{|\zeta|^{2-\beta}}{m(\zeta)} {d}\zeta
  +  C_0 |x|^{-5} \int_{\mathbb{R}^2}e^{i |x|\zeta_1} \,i\zeta_1\partial_{\zeta_1}^5\left((1-\phi_R(\zeta))\frac{|\zeta|^{2-\beta}}{m(\zeta)}\right){d}\zeta \\
  & + C_0 |x|^{-6} \int_{\mathbb{R}^2}e^{i |x|\zeta_1} \,\partial_{\zeta_1}^5\left((1-\phi_R(\zeta))\frac{|\zeta|^{2-\beta}}{m(\zeta)}\right){d}\zeta \\
  \equiv &\, {I}_{1,\beta} + {I}_{2,\beta} + {I}_{3,\beta}.
\end{split}
\end{equation*}
To estimate ${I}_{1,\beta}$, we use the assumption (A4) and Lemma \ref{lem:m} to find that if $R>b_1$, then
\begin{align*}
  |{I}_{1,\beta}| & \leq C_0 \int_{B_R} \frac{|\zeta|^{3-\beta}}{m(\zeta)} {d}\zeta
  \leq C_0 \int_{|\zeta|\leq 1} \frac{|\zeta|^{3-\beta}}{m(\zeta)} \dd \eta + C_0 \int_{1\leq |\zeta|\leq b_1} \frac{|\zeta|^{3-\beta}}{m(\zeta)} \dd \eta + C_0 \int_{b_1\leq|\zeta|\leq R} \frac{|\zeta|^{3-\beta}}{m(\zeta)} \dd \zeta \\
  &\leq C_0b_2 \int_{|\zeta|\leq 1} |\zeta|^{3-\beta-\lambda} \dd \eta +  C_0 b_2 \int_{1\leq |\zeta|\leq b_1} |\zeta|^{3-\beta} \dd \eta + \frac{C_0}{R^{-\alpha}m(R)} \int_{b_1\leq|\zeta|\leq R} |\zeta|^{3-\beta-\alpha} \dd \zeta \\
  & \leq C_0 b_2 (b_1^3+b_1^5) +  \frac{C_0 R^{5-\beta}}{ m(R)},
\end{align*}
while if $R\leq b_1$,
\begin{align*}
  |{I}_{1,\beta}| \leq C_0 \int_{|\zeta|\leq b_1} \frac{|\zeta|^{3-\beta}}{m(\zeta)} \mathrm{d}\zeta \leq C_0 b_2 (b_1^3+b_1^5).
\end{align*}
For ${I}_{2,\beta}$,  we apply \eqref{eq:cond3} along with Lemma \ref{lem:m} to  obtain
\begin{equation*}
\begin{split}
  |{I}_{2,\beta}| & \leq C |x|^{-5} \left(\int_{|\zeta|\geq R/2} \frac{1}{|\zeta|^{\beta+2}m(\zeta)} \mathrm{d}\zeta + \sum_{j=1}^5 R^{-j}\int_{R/2\leq |\zeta|\leq R} \frac{1}{|\zeta|^{\beta+2 -j}m(\zeta)} \dd \zeta \right)\\
  & \leq C |x|^{-5} \left( \frac{1}{ m(R)} \int_{|\zeta|\geq R/2} \frac{1}{ |\zeta|^{\beta+2}} \mathrm{d}\zeta + \frac{1}{R^\beta m(R)} \right)
  \leq C \frac{|x|^{-5}}{ R^\beta m(R)}.
\end{split}
\end{equation*}
For the last term, following what we did previously, we get
\begin{equation*}
  |{I}_{3,\beta}|\leq C \frac{|x|^{-5}}{ R^{\beta+1} m(R)}.
\end{equation*}
Hence, by choosing $R= |x|^{-1}$ one get the desired estimate \eqref{kernel3}. The bound \eqref{kernel4} can be obtained in the same fashion and we omit the details.
\end{proof}
The next lemma will be very useful to control some commutator and differential operators acting on product, which are crucial estimates used in the proof of Theorem \ref{thm:GE}.
\begin{lemma}\label{lem:comm}
  Let $m$ be a function satisfying (A1)-(A4).
Let $s> \max\{\alpha,\lambda\}$ be a real number.
\begin{enumerate}[(1)]
\item
For all $j=1,2$ and all $\epsilon>0$, we have
\begin{equation}\label{eq:comm}
  \left\|\left[\frac{\D^s\partial_j}{ m(\Lambda)}, f\right]g \right\|_{L^2(\R^2)} \leq C \left( \|f\|_{H^{2+\epsilon}(\R^2)}\left\|\frac{\D^s}{ m(\Lambda)}g\right\|_{L^2(\R^2)}
  + \|f\|_{H^{s +2+\epsilon}(\R^2)} \|g\|_{L^2(\R^2)}\right),
\end{equation}
where $C>0$ is a constant depending only on $s,\alpha$, $\epsilon$ and $b_0,b_1,b_2$.
\item
Let us denote by $\mathcal{M}_s(\Lambda)$ the multiplier operator with the symbol $\mathcal{M}_s(|\zeta|)$ which is given by
\begin{equation}\label{Ms}
  \mathcal{M}_s(|\zeta|)\equiv
  \begin{cases}
    \frac{m(\zeta)}{|\zeta|^s},\quad &\mathrm{if}\; |\zeta|\geq 1, \\
    m(1),\quad &\mathrm{if}\; |\zeta|\leq 1.
  \end{cases}
\end{equation}
Then, for every $j=1,2$ and every $\epsilon>0$, we get
\begin{equation}\label{eq:comm2}
  \left\|\mathcal{M}_s(\Lambda)\bigg(\left[\frac{\D^s\partial_j}{m(\Lambda)}, f\right]g\bigg) \right\|_{L^2(\R^2)}\leq C \|f\|_{H^{s+2+\epsilon}(\R^2)} \|g\|_{L^2(\R^2)},
\end{equation}
with $C>0$ a constant depending only on $s,\alpha$, $\epsilon$ and $b_0,b_1,b_2$.
\item
We have the following product estimates that for every $\epsilon>0$,
\begin{equation}\label{eq:prodEs}
  \left\|\frac{\D^s}{m(\Lambda)}(f g)\right\|_{L^2(\R^2)} \leq C \left(\left\|\frac{\D^s}{m(\Lambda)}f\right\|_{L^2(\R^2)} \|g\|_{H^{1+\epsilon}(\R^2)} + \|f\|_{L^2(\R^2)} \|g\|_{H^{s+1+\epsilon}(\R^2)}\right),
\end{equation}
and
\begin{equation}\label{eq:prodEs2}
  \left\|\frac{1}{\mathcal{M}_s(\Lambda)}(fg)\right\|_{L^2(\R^2)} \leq C \left(\left\|\frac{\D^s}{m(\Lambda)}f\right\|_{L^2(\R^2)} \|g\|_{H^{1+\epsilon}(\R^2)} + \|f\|_{L^2(\R^2)} \|g\|_{H^{s+1+\epsilon}(\R^2)}\right),
\end{equation}
with $C>0$ some constant depending only on $s$, $\epsilon$ and $b_1,b_2$.
\end{enumerate}
\end{lemma}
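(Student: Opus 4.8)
The plan is to reduce all four estimates \eqref{eq:comm}–\eqref{eq:prodEs2} to paraproduct/Bony-decomposition arguments in Besov–Sobolev spaces, using the kernel bounds of \LEM{lem:kernel}(2) and the Mikhlin-type property \LEM{lem:mBern}. The key structural observation is that the operator $\frac{\D^s\partial_j}{m(\Lambda)}$ has a symbol which, for $|\zeta|\geq 1$, behaves like $|\zeta|^{s+1}/m(\zeta)\approx |\zeta|^{s+1-\alpha}$ (by \eqref{m-fac2}, since $s>\alpha$ this is genuinely subordinate to $|\zeta|^{s+1}$ up to the Mikhlin-class symbol $m(\zeta)/|\zeta|^s$), while for $|\zeta|\leq 1$ it behaves like $|\zeta|^{1-\lambda}$ by \eqref{eq:cond4}, hence is a bounded multiplier near the origin after removing one derivative. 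Thus $\frac{\D^s\partial_j}{m(\Lambda)} = \mathcal{M}_s(\Lambda)^{-1}\,\Lambda^s\partial_j\,\Lambda^{-s}\cdot(\text{low-freq. correction})$ is, up to harmless Mikhlin factors, comparable to $\Lambda^{s}\partial_j$ divided by a Mikhlin multiplier, which is what makes the commutator gain the right number of derivatives.

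First I would prove the product estimates \eqref{eq:prodEs}–\eqref{eq:prodEs2}, since they are the cleanest and the commutator estimates are built on the same mechanism. Writing $fg$ via Bony's decomposition $fg = T_fg + T_gf + R(f,g)$, apply $\frac{\D^s}{m(\Lambda)}$ (respectively $\mathcal{M}_s(\Lambda)^{-1}$). For the paraproduct $T_gf$ one keeps the full $\frac{\D^s}{m(\Lambda)}$ on $f$ and loses only $L^\infty$-type norms of $g$, controlled by $\|g\|_{H^{1+\epsilon}}$ via the embedding $H^{1+\epsilon}(\R^2)\hookrightarrow L^\infty\cap \dot B^0_{\infty,1}$; here one uses that $\frac{\D^s}{m(\Lambda)}\Delta_j$ is a bounded operator up to the constant absorbed into $\|\D^s/m(\Lambda)\,\Delta_j f\|_{L^2}$ (for low frequencies one uses \eqref{eq:cond4} and $s>\lambda$ to see $\D^s/m(\Lambda)$ is bounded on $S_0$). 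For $T_fg$ and the remainder $R(f,g)$, the derivatives $\D^s/m(\Lambda)$ (order $\le s-\alpha$ at high frequency, order $\le s-\lambda$ at low frequency) fall on $g$, producing $\|f\|_{L^2}\|g\|_{H^{s+1+\epsilon}}$ after summing the geometric series in $j$ (the extra $\epsilon$ and the extra derivative compensating the $\ell^1$ summation in the remainder). Estimate \eqref{eq:prodEs2} is identical since $\mathcal{M}_s(\Lambda)^{-1}$ has symbol $\le |\zeta|^s/m(\zeta)$ at high frequency and $m(1)^{-1}$ at low frequency, i.e. it is dominated by $\D^s/m(\Lambda)$ plus a bounded piece.

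Next, for the commutator \eqref{eq:comm}, I would write
$$
\Bigl[\tfrac{\D^s\partial_j}{m(\Lambda)},f\Bigr]g = \tfrac{\D^s\partial_j}{m(\Lambda)}(fg) - f\,\tfrac{\D^s\partial_j}{m(\Lambda)}g,
$$
and split the operator $\frac{\D^s\partial_j}{m(\Lambda)}$ into its high-frequency part $\frac{\D^s\partial_j}{m(\Lambda)}\mathcal H_0$ and low-frequency part $\frac{\D^s\partial_j}{m(\Lambda)}S_0$. The low-frequency piece has a smooth rapidly decaying kernel (by \eqref{eq:cond4}, $s>\lambda$), so the commutator there is bounded trivially by $\|f\|_{H^{2+\epsilon}}\|g\|_{L^2}$ via Young/Schur. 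For the high-frequency piece I would use the kernel representation from \LEM{lem:kernel}(2): $\widetilde K_{\beta,j}$ (with $\beta$ replaced appropriately by $s$, i.e. the kernel of $\D^s\partial_j/m(\Lambda)$ obtained from the same computation) has the size bound $|\widetilde H(x)|\lesssim C_0 + |x|^{-(s+3)}/m(|x|^{-1})$ and gradient bound $|\nabla\widetilde K|\lesssim C_0' + |x|^{-(s+4)}/m(|x|^{-1})$ — essentially \eqref{kernel3}–\eqref{kernel4} with the exponent shifted. Then the standard commutator identity
$$
\Bigl(\Bigl[\tfrac{\D^s\partial_j}{m(\Lambda)}\mathcal H_0,f\Bigr]g\Bigr)(x) = \int \widetilde K(x-y)\,\bigl(f(y)-f(x)\bigr)\,g(y)\,\dd y
$$
together with the mean-value bound $|f(y)-f(x)|\le |x-y|\,\|\nabla f\|_{L^\infty}$ for $|x-y|$ small and $|f(y)-f(x)|\le 2\|f\|_{L^\infty}$ for $|x-y|$ large, plus the decay of $\nabla\widetilde K$, gives — via Schur's test, using that $\int_{|z|\le 1}|z|\cdot|z|^{-(s+3)}m(|z|^{-1})^{-1}\dd z<\infty$ when $s<1$... this is where care is needed. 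Actually the cleaner route, and the one I would follow, is purely Littlewood–Paley: decompose $f = S_0 f + \sum_{k\ge 0}\Delta_k f$ and $g = S_0 g + \sum\Delta_\ell g$, expand the commutator into paraproduct-type pieces $[\cdot,\Delta_k f]\Delta_\ell g$, and for the diagonal and high–high pieces use \LEM{lem:mBern} to see $\frac{\D^s\partial_j}{m(\Lambda)}$ acts on frequency $2^k$ with norm $\lesssim 2^{k(s+1)}/m(2^k)\lesssim 2^{k(s+1-\alpha)}$, so that the $s$ derivatives distribute between $f$ and $g$; a commutator between a Calderón–Zygmund-type operator of order $s+1$ and multiplication by $\Delta_k f$ gains one derivative, leaving order $s$, and this $s$ is then moved onto $g$ (giving $\|g\|_{H^s}$, absorbed into $\|g\|_{L^2}$-with-$s$-derivatives terms) or split with $f$ (giving $\|\D^s/m(\Lambda)\,g\|_{L^2}$). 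Summing in $k,\ell$ requires the extra $\epsilon$-regularity on $f$ to make $\{2^{k(2+\epsilon)}\|\Delta_k f\|_{L^\infty}\}\in\ell^1$, i.e. $\|\nabla^2 f\|_{\dot B^{\epsilon}_{\infty,1}}\lesssim \|f\|_{H^{2+\epsilon}}$, and likewise $\|f\|_{H^{s+2+\epsilon}}$ for the term where $s$ derivatives land on $f$.

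Finally, for \eqref{eq:comm2}, apply the extra smoothing operator $\mathcal{M}_s(\Lambda)$ to the commutator. Since $\mathcal{M}_s(\Lambda)$ has symbol $m(\zeta)/|\zeta|^s \approx |\zeta|^{-(s-\alpha)}$ at high frequency (genuinely smoothing of order $s-\alpha>0$) and is bounded at low frequency, composing it with the order-$s$ commutator from the previous step produces an operator of order $\le \alpha < 1$, hence the single commutator derivative gain already makes it bounded; more precisely, $\mathcal{M}_s(\Lambda)\frac{\D^s\partial_j}{m(\Lambda)} = \D^{-s}m(\Lambda)\cdot\frac{\D^s\partial_j}{m(\Lambda)} = \partial_j$ at high frequency (up to the $S_0$-correction), so $\mathcal{M}_s(\Lambda)\bigl[\frac{\D^s\partial_j}{m(\Lambda)},f\bigr]g$ is morally $[\partial_j,f]g = (\partial_j f)g$ plus lower-order/commutator remainders, which is bounded by $\|\nabla f\|_{L^\infty}\|g\|_{L^2}\lesssim \|f\|_{H^{2+\epsilon}}\|g\|_{L^2}$; the remainder terms coming from the non-commutativity of $\mathcal{M}_s(\Lambda)$ with multiplication are handled exactly as in step for \eqref{eq:comm} and produce the $\|f\|_{H^{s+2+\epsilon}}\|g\|_{L^2}$ term.

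The main obstacle I anticipate is the bookkeeping in the high–high frequency interaction of the commutator \eqref{eq:comm}: one must verify that after the commutator's one-derivative gain, the remaining $s$ derivatives can always be routed so as to land on either $g$ alone (absorbed into $\|g\|_{L^2}$ at the cost of $\|f\|_{H^{s+2+\epsilon}}$) or split as $\D^s/m(\Lambda)$ on $g$ (giving the $\|\D^s/m(\Lambda)\,g\|_{L^2}$ term at the cost of only $\|f\|_{H^{2+\epsilon}}$), and that the operator $\D^s/m(\Lambda)$ acting on a single dyadic block obeys the norm bound $2^{k s}/m(2^k)$ uniformly — which is exactly \LEM{lem:mBern} applied to $1/m$, requiring the Mikhlin bounds \eqref{eq:cond3} for $m$ to transfer to $1/m$ (straightforward, using $m>0$ and the quotient rule, which is why five derivatives were assumed in (A2)). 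The low-frequency regime is where the hypothesis $s>\lambda$ and \eqref{eq:cond4} are needed to ensure $\D^s/m(\Lambda)$ and $1/\mathcal{M}_s(\Lambda)$ are bounded on $S_0$; this is routine but must be stated.
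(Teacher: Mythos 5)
Your proposal takes a genuinely different route from the paper. The paper uses no Littlewood--Paley decomposition, no paraproducts and no kernel bounds here: it works entirely on the Fourier side, writing each quantity as a frequency-space convolution, e.g. $\mathcal{F}\bigl(\bigl[\tfrac{\D^s\partial_j}{m(\Lambda)},f\bigr]g\bigr)(\zeta)=\int\bigl(\tfrac{|\zeta|^s}{m(\zeta)}\zeta_j-\tfrac{|\eta|^s}{m(\eta)}\eta_j\bigr)\hat f(\zeta-\eta)\hat g(\eta)\,\dd\eta$, proving the pointwise mean-value bound \eqref{eq:Fes0} on the symbol difference (via (A2) and the monotonicity of $|\zeta|^{-\rho}m(|\zeta|)$ from Lemma~\ref{lem:m}), trading $\tfrac{|\zeta|^s}{m(\zeta)}$ for $\tfrac{|\zeta-\eta|^s}{m(\zeta-\eta)}+\tfrac{|\eta|^s}{m(\eta)}+C$, and closing with Plancherel and Young's convolution inequality; the norms $H^{2+\epsilon}$ and $H^{s+2+\epsilon}$ enter only through $\||\zeta|\hat f\|_{L^1}$ and $\|\tfrac{|\zeta|^{s+1}}{m(\zeta)}\hat f\|_{L^1}$. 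For \eqref{eq:comm2} the paper simply multiplies the same symbol bound by $\mathcal{M}_s(|\zeta|)$ and uses $\mathcal{M}_s(|\zeta|)\le b_2$ and $\mathcal{M}_s(|\zeta|)\tfrac{|\zeta|^s}{m(\zeta)}\le b_2^2$ to eliminate every $\eta$-dependent weight on $\hat g$. This is shorter and more elementary than your paraproduct scheme; your scheme is more robust (it would yield $L^p$ variants) at the cost of heavier bookkeeping.

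That said, three points in your sketch need repair before it closes. First, Lemma~\ref{lem:kernel}(2) is proved only for the operator $\tfrac{\D^{2-\beta}\partial_j}{m(\Lambda)}$ with $\beta\in(0,1]$; it is not available ``with $\beta$ replaced by $s$'' for general $s>\max\{\alpha,\lambda\}$, so the kernel/Schur route you start down would require re-deriving those bounds (you do abandon it, but the Littlewood--Paley alternative should then carry the full weight). Second, your argument for \eqref{eq:comm2} asserts that the remainders from the non-commutativity of $\mathcal{M}_s(\Lambda)$ with multiplication are ``handled exactly as in'' part (1); but part (1) produces the term $\|f\|_{H^{2+\epsilon}}\|\tfrac{\D^s}{m(\Lambda)}g\|_{L^2}$, which must \emph{not} survive in \eqref{eq:comm2}. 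One has to show explicitly that the $s-\alpha$ orders of smoothing in $\mathcal{M}_s(\Lambda)$ cancel the $\tfrac{\D^s}{m(\Lambda)}$ falling on $g$ in the low-high interaction (this is exactly what the inequality $\mathcal{M}_s(|\zeta|)\tfrac{|\zeta|^s}{m(\zeta)}\le b_2^2$, combined with the triangle-type trade of $\tfrac{|\eta|^s}{m(\eta)}$ for $\zeta$- and $(\zeta-\eta)$-factors, accomplishes in the paper); as written this is a genuine gap. Third, the embedding you invoke, $\|\nabla^2 f\|_{\dot B^{\epsilon}_{\infty,1}}\lesssim\|f\|_{H^{2+\epsilon}}$, is false in $\R^2$ (Bernstein costs a full derivative, so that quantity needs $H^{3+\epsilon}$); what the argument actually requires is only $\|\nabla f\|_{B^0_{\infty,1}}\lesssim\|f\|_{H^{2+\epsilon}}$, equivalently $\||\zeta|\hat f\|_{L^1}\lesssim\|f\|_{H^{2+\epsilon}}$, which is true and is the estimate the paper uses in \eqref{eq:CTes3}.
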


\begin{proof}[{\bf{Proof of Lemma \ref{lem:comm}}}]
(1) We shall first prove \eqref{eq:comm}. By using the Fourier transform, we have
\begin{equation}\label{eq:commFT}
  \mathcal{F}\left(\left[\frac{\D^s}{m(\Lambda)}\partial_j,f\right]g\right)(\zeta)=
  \int_{\R^2} \left(\frac{|\zeta|^s}{m(\zeta)} \zeta_j - \frac{|\eta|^s}{ m(\eta)} \eta_j\right)\hat{f}(\zeta-\eta) \hat{g}(\eta) \dd \eta.
\end{equation}
By a direct computation, we find that
\begin{equation}\label{eq:Fes0}
  \left|\frac{|\zeta|^s }{m(\zeta)} \zeta_j- \frac{|\eta|^s}{ m(\eta)}\eta_j\right|\leq (s+1+b_0) \max\left\{b_2b_1^s,\frac{|\zeta|^s}{m(\zeta)}, \frac{|\eta|^s}{ m(\eta)}\right\} |\zeta-\eta|.
\end{equation}
Indeed, by setting $F(\zeta)=\frac{|\zeta|^s\zeta_j}{m(\zeta)}$, one observes that, by using the assumption (A2),
$$|\partial_{\zeta_k} F(\zeta)|\leq (s+1) \frac{|\zeta|^s}{m(\zeta)} + \frac{|\zeta|^{s+1} m'(|\zeta|)}{m(\zeta)^2}\leq  (s+1+b_0) \frac{|\zeta|^s}{m(\zeta)},\quad \textrm{for  $k=1,2$,}$$ we see that
\begin{align}\label{eq:Fes1}
  |F(\zeta)-F(\eta)| & =\left|\int_0^1 \frac{\partial}{\partial \tau}F(\tau \zeta +(1-\tau)\eta) \dd \tau \right| \nonumber \\
  & \leq \int_0^1 |(\zeta-\eta)\cdot\nabla F(\tau \zeta + (1-\tau)\eta)| \dd \tau \nonumber\\
  & \leq (s+1+b_0)|\zeta-\eta| \,\int_0^1 \frac{|\tau \zeta +(1-\tau)\eta|^s}{ m(\tau\zeta+(1-\tau)\eta)} \dd \tau.
\end{align}
If $|\tau\zeta +(1-\tau)\eta|\leq b_1$, by considering the cases $|\tau\zeta+(1-\tau)\tau|\leq 1$ and $1\leq |\tau \zeta +(1-\tau)\eta|\leq b_1$ separately, we have
\begin{align*}
  \frac{|\tau\zeta +(1-\tau)\eta|^s}{m(\tau\zeta +(1-\tau)\eta)} \leq \max\{b_2,b_2 b_1^s\}=b_2 b_1^s;
\end{align*}
if $|\tau\zeta +(1-\tau)\eta|\geq b_1$ and $|\zeta|\geq |\eta|$, we get $|\tau \zeta +(1-\tau)\eta|\leq |\zeta|$, thus \eqref{m-fac2} implies that
\begin{equation*}
  \frac{|\tau \zeta + (1-\tau)\eta|^s}{m(\tau \zeta +(1-\tau)\eta)}= \frac{|\tau \zeta + (1-\tau)\eta|^{s-\alpha}}{|\tau\zeta +(1-\tau)\eta|^{-\alpha}m(\tau \zeta +(1-\tau)\eta)}
  \leq \frac{|\tau \zeta +(1-\tau)\eta|^{s-\alpha}}{|\zeta|^{-\alpha}m(\zeta)}\leq \frac{|\zeta|^s}{m(\zeta)};
\end{equation*}
while if $|\tau\zeta +(1-\tau)\eta|\geq b_1$ and $|\eta|\geq |\zeta|$ it is not difficult to obtain the same inequality (with $\eta$ instead of $\zeta$); then we conclude that
\begin{equation}\label{eq:Fes2}
  \frac{|\tau \zeta + (1-\tau)\eta|^s}{m(\tau \zeta +(1-\tau)\eta)}\leq \max\left\{b_2b_1^s,\frac{|\zeta|^s}{m(\zeta)}, \frac{|\eta|^s}{m(\eta)} \right\}.
\end{equation}
Hence, by using \eqref{eq:Fes2} to control \eqref{eq:Fes1} one obtains  the desired estimate \eqref{eq:Fes0}. \\

\noindent Then, we observe that if $|\zeta|\geq b_1$,
\begin{equation}\label{eq:fact3}
  \frac{|\zeta|^s}{m(\zeta)} =\frac{|\zeta|^{s-\alpha}}{|\zeta|^{-\alpha}m(\zeta)} \leq \frac{(|\zeta-\eta|+|\eta|)^s}{m(|\zeta-\eta|+|\eta|)}
  \leq \frac{C_s(|\zeta-\eta|^s + |\eta|^s)}{m(|\zeta-\eta|+|\eta|)}\leq C_s \left(\frac{|\zeta-\eta|^s}{m(\zeta-\eta)} + \frac{|\eta|^s}{m(\eta)}\right),
\end{equation}
while if $|\zeta|\leq b_1$,
\begin{equation}\label{eq:fact4}
\frac{|\zeta|^s}{m(\zeta)}\leq b_2b_1^s,
\end{equation}
therefore, by using \eqref{eq:commFT} and \eqref{eq:Fes0}, we infer that
\begin{equation}
\begin{split}
  \left|\mathcal{F}\left(\left[\frac{\D^s\partial_j}{m(\Lambda)},f\right]g\right)(\zeta)\right| \leq
  C'_s \int_{\R^2} \left(\frac{|\zeta-\eta|^{s+1}}{m(\zeta-\eta)} + |\zeta-\eta| \frac{|\eta|^s}{m(\eta)} + |\zeta-\eta|\right) |\hat f(\zeta-\eta)|\, |\hat g(\eta)|\,\dd \eta.
\end{split}
\end{equation}
Using Plancherel's theorem and Young's inequality for convolution, we obtain
\begin{equation}\label{eq:CTes1}
\begin{split}
  \left\| \left[\frac{\D^s\partial_j}{m(\Lambda)},f\right]g\right\|_{L^2(\R^2)} \leq & C_s' \left\|\frac{|\zeta|^{s+1}}{m(\zeta)}|\hat f(\zeta)|\right\|_{L^1(\R^2)} \|g\|_{L^2}
  + C_s' \big\||\zeta||\hat f(\zeta)|\big\|_{L^1(\R^2)} \left\|\frac{\D^s}{m(\Lambda)}g\right\|_{L^2(\R^2)} \\
  & + C_s' \big\||\zeta||\hat f(\zeta)|\big\|_{L^1(\R^2)} \|g\|_{L^2(\R^2)} .
\end{split}
\end{equation}
Now, by using the assumption (A4) along with the nondecreasing property of $m$ and then the H\"older inequality, it follows that for all $\epsilon>0$,
\begin{equation}\label{eq:CTes2}
\begin{split}
  \left\|\frac{|\zeta|^{s+1}}{m(\zeta)}|\hat f(\zeta)|\right\|_{L^1(\R^2)} & = \int_{|\zeta|\leq 1} \frac{|\zeta|^{s+1}}{m(\zeta)} |\hat f(\zeta)|\dd \zeta
  + \int_{|\zeta|\geq 1} \frac{|\zeta|^{s+1}}{m(\zeta)} |\hat f(\zeta)|\dd \zeta \\
  & \leq b_2 \int_{|\zeta|\leq 1} |\zeta|^{s+1-\lambda} |\hat f(\zeta)| \dd \zeta + b_2 \int_{|\zeta|\geq 1} |\zeta|^{s+1} |\hat f(\zeta)| \dd \zeta \\
  & \leq C_0b_2\|f\|_{L^2(\R^2)} + b_2 \Big(\int_{|\zeta|\geq 1} |\zeta|^{-2-2\epsilon}\dd \zeta\Big)^{1/2} \|f\|_{H^{s+2+\epsilon}(\R^2)} \\
  & \leq C_\epsilon b_2 \|f\|_{H^{s+2+\epsilon}(\R^2)} ,
\end{split}
\end{equation}
as well,
\begin{equation}\label{eq:CTes3}
  \||\zeta| |\hat f(\zeta)|\|_{L^1(\R^2)} \leq C_\epsilon \|f\|_{H^{2+\epsilon}(\R^2)}.
\end{equation}
Hence, the estimates \eqref{eq:CTes1}-\eqref{eq:CTes3} allow us to obtain the inequality \eqref{eq:comm}. \\

(2) By taking the Fourier transform, we see that
\begin{equation*}
  \mathcal{F}\left(\mathcal{M}_s(\Lambda)\left(\left[\frac{\D^s}{m(\Lambda)}\partial_j,f\right]g\right)\right)(\zeta)=
  \mathcal{M}_s(|\zeta|)\int_{\R^2} \left(\frac{|\zeta|^s}{m(\zeta)} \zeta_j - \frac{|\eta|^s}{ m(\eta)} \eta_j\right)\hat{f}(\zeta-\eta) \hat{g}(\eta) \dd \eta.
\end{equation*}
Using  \eqref{Ms}, \eqref{eq:Fes0} and the following inequality $\frac{|\eta|^s}{m(\eta)} \leq C_s \left(b_2b_1^s+\frac{|\zeta|^s}{m(\zeta)} + \frac{|\zeta-\eta|^s}{m(\zeta-\eta)}\right)$, we find
\begin{equation*}
\begin{split}
  \mathcal{M}_s(|\zeta|) \left|\frac{|\zeta|^s}{m(\zeta)}\zeta_j - \frac{|\eta|^s}{m(\eta)}\eta_j \right| & \leq C \mathcal{M}_s(|\zeta|)\left(1+ \frac{|\zeta|^s}{m(\zeta)} + \frac{|\zeta-\eta|^s}{m(\zeta-\eta)}\right)|\zeta-\eta| \\
  & \leq C \left(1 + \frac{|\zeta-\eta|^s}{m(\zeta-\eta)}\right) |\zeta-\eta|,
\end{split}
\end{equation*}
where in the last estimate we used the inequalities $\mathcal{M}_s(|\zeta|)\leq m(1)\leq b_2$ and $\mathcal{M}_s(|\zeta|) \frac{|\zeta|^s}{m(\zeta)}\leq b_2^2$.
Hence,  as we did for \eqref{eq:CTes1}, by collecting the above estimates with \eqref{eq:CTes2} and \eqref{eq:CTes3} one gets the desired estimate \eqref{eq:comm2}. \\

(3) We only prove \eqref{eq:prodEs}, and the proof of \eqref{eq:prodEs2} will follow using essentially the same argument. By using, once again, the Fourier transform and \eqref{eq:fact3}-\eqref{eq:fact4}, we get
\begin{equation*}
\begin{split}
  & \left|\mathcal{F}\left(\frac{\D^s}{m(\Lambda)} (fg)\right) (\zeta)\right| =\left| \frac{|\zeta|^s}{m(\zeta)} \int_{\R^2} \hat f(\zeta-\eta) \hat g(\eta) \dd \eta \right| \\
  \leq & C_s \left(\int_{\R^2} \frac{|\zeta-\eta|^s}{m(\zeta-\eta)} |\hat f(\zeta-\eta)| |\hat g(\eta)|\dd \eta + \int_{\R^2} |\hat f(\zeta-\eta)| \frac{|\eta|^s}{m(\eta)} |\hat g(\eta)| \dd \eta
  +\int_{\R^2} |\hat f(\zeta-\eta)| |\hat g(\eta)| \dd \eta\right).
\end{split}
\end{equation*}
The Plancherel theorem and Young's inequality give, for all $\epsilon>0$, the following control
\begin{equation*}
\begin{split}
  \left\| \frac{\D^s}{m(\Lambda)}(fg)\right\|_{L^2(\R^2)} & \leq C_s \left( \left\|\frac{\D^s}{m(\Lambda)}f\right\|_{L^2(\R^2)} \|\hat g\|_{L^1(\R^2)}
  + \|f\|_{L^2} \left\|\frac{|\zeta|^s}{m(\zeta)} |\hat g|\right\|_{L^1(\R^2)} + \|f\|_{L^2} \| \hat{g}\|_{L^1(\R^2)}\right) \\
  & \leq C_{s,\epsilon} \left( \left\|\frac{\D^s}{m(\Lambda)}f\right\|_{L^2(\R^2)} \| g\|_{H^{1+\epsilon}(\R^2)}
  + \|f\|_{L^2(\R^2)} \left\| g\right\|_{H^{s+1+\epsilon}(\R^2)} \right),
\end{split}
\end{equation*}
where in the second line we have used \eqref{eq:CTes2}-\eqref{eq:CTes3} applied to $g$ instead of $f$.
\end{proof}

\subsection{Modulus of continuity}\label{subsec:MOC}

In this subsection we give the definition of a modulus of continuity, and then collect some useful results related to the modulus of continuity.
\begin{definition}
  A function $\omega:(0,\infty) \rightarrow (0,\infty)$ is called a modulus of continuity if $\omega$ is continuous on $(0,\infty)$, nondecreasing, concave, and
  piecewise $C^2$ with one-sided derivatives defined at every point in $(0,\infty)$.
  We say a function $f:\mathbb{R}^d\rightarrow \mathbb{R}^l$ obeys the modulus of continuity $\omega$ if $|f(x)-f(y)| < \omega(|x-y|)$
  for every $x\neq y\in \mathbb{R}^d$.
\end{definition}

First we recall the general criterion of the nonlocal maximum principle for the drift-diffusion equation
(for the proof see e.g. \cite[Th 2.2]{Kis} and \cite[Prop 3.1]{MX11}).

\begin{proposition}\label{prop-GC}
 Let $\theta(x,t)\in C([0,\infty);H^s(\R^d))$, $s>\frac{d}{2}+1$ be a smooth solution of the following whole-space drift-diffusion equation
\begin{equation}\label{appActS}
 \partial_t \theta + u\cdot\nabla \theta +  \nu \D^\beta \theta -\epsilon \Delta \theta=0,\quad \theta(0,x)=\theta_0(x), \; x\in\mathbb{R}^d,
\end{equation}
with $\nu\geq 0$, $\epsilon\geq 0$. Assume that
\\
(1) for every $t\geq 0$, $\omega(\xi,t)$ is a modulus of continuity and satisfies that its inverse function $\omega^{-1}((2+\epsilon_0)\|\theta(\cdot,t)\|_{L^\infty_x},t)<\infty$ (with some $\epsilon_0>0$);
\\
(2) for every fixed point $\xi$, $\omega(\xi,t)$ is piecewise $C^1$ in the time variable with one-sided derivatives defined at each point,
 and that for all $\xi$ near infinity, $\omega(\xi,t)$ is continuous in $t$ uniformly in $\xi$;
\\
(3) $\omega(0+,t)$ and $\partial_\xi \omega(0+,t)$ are continuous in $t$ with values in $\R\cup\{\pm \infty\}$,
 and satisfy that for every $t\geq 0$, either $\omega(0+,t)>0$ or $\partial_\xi\omega( 0+,t)=\infty$ or $\partial_{\xi\xi}\omega (0+,t) =-\infty$.

Let the initial data $\theta_0(x)$ obey $\omega(\xi,0)$, then for some $T>0$, $\theta(x,T)$ obeys the modulus of continuity $\omega(\xi,T)$
provided that for all $t\in ]0, T]$ and $\xi\in \set{\xi>0:\omega(\xi,t)\leq 2\| \theta(\cdot,t)\|_{L^\infty_x}}$, $\omega(\xi,t)$ satisfies
\begin{equation}\label{keyGC}
  \partial_t \omega(\xi,t)> \Omega(\xi,t)\,\partial_\xi\omega(\xi,t) + \nu D(\xi,t) + 2\epsilon \partial_{\xi\xi}\omega(\xi,t),
\end{equation}
where $\Omega(\xi,t)$ and $D(\xi,t)$ (we suppress the dependence on $x,e$ in $\Omega(\xi,t)$, $D(\xi,t)$) are respectively defined as follows,  for all $x\in\mathbb{R}^d$
and all unit vector $e\in \mathbb{S}^{d-1}$ in \eqref{scena}, we set
\begin{equation}\label{GC-Om}
  \qquad\quad\Omega(\xi,t)= |(u(x+\xi e,t)-u(x,t))\cdot e|, \quad \textrm{and }
\end{equation}
\begin{equation}\label{GC-D}
  D(\xi,t)= -\big( \D^\beta \theta(x,t) - \D^\beta\theta(x+\xi e,t)\big),
\end{equation}
under the scenario that
\begin{equation}\label{scena}
\begin{split}
  \theta(x,t)-\theta(x+\xi e ,t) = \omega(\xi,t),\quad &\textrm{and}\\
  |\theta(y,t)-\theta(z,t)| \leq \omega(|y-z|,t), \quad &\forall y,z\in\R^d .
\end{split}
\end{equation}

In \eqref{keyGC}, at the points where $\partial_t\omega(\xi,t)$ (or $\partial_\xi\omega(\xi,t)$) does not exist,
the smaller (or larger) value of the one-sided derivative should be taken.
\end{proposition}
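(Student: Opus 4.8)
The plan is to run the classical \emph{modulus of continuity propagation} argument of Kiselev--Nazarov--Volberg (see \cite{KNV,Kis,MX11}), adapted to the time-dependent modulus $\omega(\cdot,t)$, via a continuity--contradiction scheme. Introduce
\[
g(t):=\sup_{x\neq y}\bigl[\theta(x,t)-\theta(y,t)-\omega(|x-y|,t)\bigr].
\]
First I would show that $g(0)<0$: $\theta_0$ obeys $\omega(\cdot,0)$ by assumption, and this strict pointwise inequality upgrades to $g(0)<0$ with the supremum attained at a pair $x_0\neq y_0$ lying at bounded, nonzero distance --- separation from $|x-y|\to\infty$ comes from hypothesis (1) (so that $\omega(\xi,0)$ exceeds $2\|\theta(\cdot,0)\|_{L^\infty}$ by a fixed margin for large $\xi$), and separation from $|x-y|\to0$ from hypothesis (3) together with the H\"older regularity $C^{1,\gamma}$, $\gamma=s-\tfrac d2-1>0$, of $\theta_0\in H^s$. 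Since $\theta$ (and, by parabolic/fractional smoothing, its derivatives for $t>0$) and $\omega$ depend continuously on $t$---uniformly in $\xi$ near infinity by hypothesis (2)---the map $t\mapsto g(t)$ is continuous, so that $t_1:=\inf\{t\in[0,T]:g(t)=0\}$ satisfies $t_1>0$ and $g(t)<0$ for all $t<t_1$.

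Suppose for contradiction that $t_1\le T$. Then $g(t_1)=0$, and by the same decay analysis the supremum is attained at a genuinely separated pair: there are $x_0\neq y_0$ with $y_0=x_0+\xi_0 e$, $|e|=1$, $\xi_0\in(0,\infty)$, such that the scenario \eqref{scena} holds at time $t_1$. Moreover $\omega(\xi_0,t_1)=\theta(x_0,t_1)-\theta(y_0,t_1)\le 2\|\theta(\cdot,t_1)\|_{L^\infty}$, so $\xi_0$ lies in the set on which \eqref{keyGC} is assumed.

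Next I would extract pointwise data from \eqref{scena}. Since the nonnegative function $z\mapsto\omega(|x_0-z|,t_1)-\theta(x_0,t_1)+\theta(z,t_1)$ attains its minimum $0$ at $z=y_0$, its gradient there vanishes, giving $\nabla\theta(y_0,t_1)=-\partial_\xi\omega(\xi_0,t_1)\,e$; likewise $x\mapsto\omega(\xi_0,t_1)-\theta(x,t_1)+\theta(x+\xi_0 e,t_1)$ is minimized (at value $0$) at $x=x_0$, giving $\nabla\theta(x_0,t_1)=\nabla\theta(y_0,t_1)$ and $\nabla^2\theta(x_0,t_1)\le\nabla^2\theta(y_0,t_1)$ as symmetric matrices; and a second-derivative analysis of the constraint along $e$, based at $x_0$ and at $y_0$, gives $e\cdot\nabla^2\theta(x_0,t_1)\,e\le\partial_{\xi\xi}\omega(\xi_0,t_1)$ and $e\cdot\nabla^2\theta(y_0,t_1)\,e\ge-\partial_{\xi\xi}\omega(\xi_0,t_1)$, so that, summing over an orthonormal basis containing $e$, $\Delta\theta(x_0,t_1)-\Delta\theta(y_0,t_1)\le 2\,\partial_{\xi\xi}\omega(\xi_0,t_1)$. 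Using \eqref{appActS} at $x_0$ and at $y_0$ and subtracting, the drift contribution to $\frac{d}{dt}\bigl[\theta(x_0,t)-\theta(y_0,t)\bigr]\big|_{t=t_1}$ equals $-\partial_\xi\omega(\xi_0,t_1)\,(u(y_0,t_1)-u(x_0,t_1))\cdot e\le\Omega(\xi_0,t_1)\,\partial_\xi\omega(\xi_0,t_1)$ (by $\partial_\xi\omega\ge0$), the dissipative contribution is exactly $\nu D(\xi_0,t_1)$ (see \eqref{GC-D}), and the viscous contribution is $\le 2\epsilon\,\partial_{\xi\xi}\omega(\xi_0,t_1)$. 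On the other hand, for $h>0$ one has $g(t_1-h)<0=g(t_1)$ and $g(t_1-h)\ge\theta(x_0,t_1-h)-\theta(y_0,t_1-h)-\omega(\xi_0,t_1-h)$; dividing by $h$ and letting $h\to0^+$ gives $\partial_t\omega(\xi_0,t_1)\le\frac{d}{dt}\bigl[\theta(x_0,t)-\theta(y_0,t)\bigr]\big|_{t=t_1}$, with left one-sided derivatives at any corner of $\omega$. Combining these estimates yields $\partial_t\omega(\xi_0,t_1)\le\Omega(\xi_0,t_1)\,\partial_\xi\omega(\xi_0,t_1)+\nu D(\xi_0,t_1)+2\epsilon\,\partial_{\xi\xi}\omega(\xi_0,t_1)$, which---with the one-sided conventions of the statement---contradicts \eqref{keyGC}. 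Hence $t_1>T$, so $g(t)<0$ for all $t\in[0,T]$; in particular $\theta(\cdot,T)$ obeys $\omega(\cdot,T)$.

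The pointwise identities and the drift/dissipation/viscosity bookkeeping of the third paragraph are the routine heart of the method; I would only sketch them, taking care at corners of $\omega$ (so that the one-sided derivatives enter as prescribed) and noting that for $t>0$ the solution of \eqref{appActS} is as smooth as needed (or the relevant term carries a vanishing coefficient). The genuine obstacle is the first two paragraphs: establishing that both $g(0)<0$ and the supremum at $t_1$ are realized at a pair with $0<|x_0-y_0|<\infty$. Separation from $|x-y|\to\infty$ is the soft consequence of hypotheses (1)--(2); separation from $|x-y|\to0$ is the delicate point, and is exactly why three alternatives appear in hypothesis (3)---one invokes the lemma that an $H^s$ function ($s>\tfrac d2+1$) obeying a modulus $\omega$ with $\omega(0+)>0$, or $\partial_\xi\omega(0+)=\infty$, or $\partial_{\xi\xi}\omega(0+)=-\infty$, cannot realize $\sup_{x\neq y}[\theta(x)-\theta(y)-\omega(|x-y|)]$ in the limit $|x-y|\to0$. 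For this I would follow \cite[Th.~2.2]{Kis} (cf.\ \cite[Prop.~3.1]{MX11}).
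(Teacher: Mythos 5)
Your proposal is correct and follows essentially the same route as the paper, which does not prove Proposition \ref{prop-GC} itself but defers to \cite[Th 2.2]{Kis} and \cite[Prop 3.1]{MX11}; your continuity--contradiction scheme for $g(t)=\sup_{x\neq y}[\theta(x,t)-\theta(y,t)-\omega(|x-y|,t)]$, the first/second-order touching conditions, and the drift/dissipation/viscosity bookkeeping are exactly the argument of those references, adapted as they do to time-dependent moduli. The delicate points you flag (ruling out loss of the supremum as $|x-y|\to 0$ or $\to\infty$, using hypotheses (1)--(3), and the one-sided derivative conventions at corners) are precisely the ones the cited proofs handle, so nothing essential is missing.
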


The following classical result is an estimate of the dissipative part in terms of the modulus of continuity under the scenario \eqref{scena} (e.g. \cite{Kis}).
\begin{lemma}\label{lem-mocdiss}
We have the following estimate on $D(\xi,t)$ defined by \eqref{GC-D} under the scenario \eqref{scena} that for any $\xi>0$,
\begin{equation}\label{D2}
\begin{split}
  D(\xi,t)\leq &
  \,C_1 \int_{0}^{\frac{\xi}{2}}  \frac{\omega(\xi+2\eta,t)+\omega (\xi-2\eta,t) -2\omega (\xi,t)}{\eta^{1+\beta}}{d} \eta \\
  & + C_1 \int_{\frac{\xi}{2}}^{\infty}  \frac{\omega (2\eta+\xi,t)-\omega (2\eta-\xi,t) -2\omega (\xi,t)}{\eta^{1+\beta}}{d}\eta,
\end{split}
\end{equation}
where $C_1>0$ is a constant depending only on $\beta$.

\end{lemma}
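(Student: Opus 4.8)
\emph{Overview.} Under the scenario \eqref{scena}, $-D(\xi,t)=\D^\beta\theta(x,t)-\D^\beta\theta(x+\xi e,t)$ is a difference of two fractional Laplacians evaluated at the two extremal points $x,\,x+\xi e$, and the whole point is to turn it, up to harmless losses, into a one–dimensional expression along the line $\ell=\{x+se:s\in\R\}$ joining them; this is exactly the type of computation behind \cite{KNV} and \cite{Kis}, which I would follow here.

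\emph{Step 1: an integral identity and nonnegativity.} Using $\D^\beta g(x)=C_\beta\,\mathrm{P.V.}\!\int_{\R^2}\frac{g(x)-g(x-y)}{|y|^{2+\beta}}\,\dd y$ for both terms and the first line of \eqref{scena}, one gets
\[
-D(\xi,t)=C_\beta\,\mathrm{P.V.}\!\int_{\R^2}\frac{\omega(\xi,t)-\big(\theta(x-y,t)-\theta(x+\xi e-y,t)\big)}{|y|^{2+\beta}}\,\dd y .
\]
Since $x-y$ and $x+\xi e-y$ are always at distance $\xi$, the second line of \eqref{scena} makes the integrand pointwise $\ge 0$ (and it vanishes at $y=0$, so given the smoothness of $\theta$ the principal value is an ordinary absolutely convergent integral); in particular $D(\xi,t)\le 0$. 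Comparing instead $x-y$ with $x+\xi e$ and $x+\xi e-y$ with $x$, and using \eqref{scena} once more, gives the sharper pointwise lower bound
\[
\omega(\xi,t)-\big(\theta(x-y,t)-\theta(x+\xi e-y,t)\big)\ \ge\ 2\omega(\xi,t)-\omega(|y+\xi e|,t)-\omega(|y-\xi e|,t).
\]

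\emph{Step 2: localisation on $\ell$ and the two regimes.} I would discard the nonnegative contribution of the $y$'s lying outside a fixed double cone around $\R e$, write $y=se+z$ with $z\perp e$ and $|z|\le c_0|s|$, so that $|y|\simeq|s|$, and use the concavity of $\omega$ (subadditivity $\omega(r+h)\le\omega(r)+\omega(h)$ and $\omega(\lambda r)\le\lambda\omega(r)$ for $\lambda\ge1$) to replace $|y\pm\xi e|$ by $|s\pm\xi|$ up to a multiplicative constant. Then one splits according to whether $x-y$ lies between the extremal points along $\ell$ ($|s|<\xi$) or not ($|s|>\xi$): in the first regime the bound of Step~1 yields the symmetric second difference $2\omega(\xi,t)-\omega(\xi+|s|,t)-\omega(\xi-|s|,t)$, while in the second regime the right estimate is obtained by a telescoping argument, bounding $\theta(x-y,t)-\theta(x+\xi e-y,t)$ through a string of $\xi$–translates of the extremal pair (this is where the global sublinearity of $\omega$, encoded in $\omega(a+\xi)-\omega(a)\le\omega(\xi)$, enters) and gives $2\omega(\xi,t)+\omega(|s|-\xi,t)-\omega(|s|+\xi,t)$; both quantities are $\ge0$ by concavity. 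The transverse integration over $|z|\le c_0|s|$ supplies the missing power $|s|$, turning $|y|^{-2-\beta}\simeq|s|^{-2-\beta}$ into $|s|^{-1-\beta}$; the substitution $|s|=2\eta$ then produces precisely the two integrals in \eqref{D2}, the constant $C_1$ absorbing $C_\beta$, the aperture $c_0$ and the universal constants from the concavity estimates, and depending only on $\beta$.

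\emph{Main obstacle.} The real work is entirely in Step~2. Since the target integrands are \emph{second differences} of the concave (hence possibly almost affine) function $\omega$, they can be arbitrarily small, so one cannot afford the crude bound $\sqrt{p^2+q^2}\le|p|+|q|$ in the transverse variable — it would swallow the main term; one must instead use $\sqrt{p^2+q^2}\le|p|+q^2/(2|p|)$ (legitimate since $|y+\xi e|\ge\xi>0$) together with subadditivity of $\omega$, so that the transverse discrepancy is a genuine multiplicative constant rather than a competing term. Likewise, the outer regime $|s|>\xi$ has to be treated by the telescoping/averaging device rather than by a naive pointwise comparison, and the transition $|s|=\xi$ (where $\omega(\xi-|s|,t)\to\omega(0^+,t)$) needs a small separate argument. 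These are all elementary estimates; everything else is bookkeeping, and the outcome is \eqref{D2}.
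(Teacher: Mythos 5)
Your Step 1 is correct, and you have correctly located the two danger points of the argument, but neither of your proposed resolutions works. For the inner regime: the integrand of the first integral in \eqref{D2} is a second difference of $\omega$ and vanishes identically whenever $\omega$ is affine on $[\xi-2\eta,\xi+2\eta]$ --- which actually occurs for the moduli \eqref{moc2}--\eqref{moc3} to which the lemma is later applied, since these are piecewise affine. Your transverse correction $\sqrt{p^2+q^2}\le |p|+q^2/(2|p|)$ followed by subadditivity (or by $\omega(a+h)-\omega(a)\le h\,\omega'(a)$) still produces an \emph{additive} error of size roughly $\omega'(|s\pm\xi|)\,|z|^2/|s\pm\xi|$, which is strictly positive whenever $\omega'>0$ and therefore cannot be dominated by any multiple of a main term that may vanish, no matter how small the cone aperture $c_0$ is (it also degenerates as $|s|\to\xi$ with $|z|$ bounded below). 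The correct device is to pair $y=se+z$ with $-se+z$, keeping the transverse component fixed: in the cross differences $\theta(x-se-z)-\theta(x+\xi e+se-z)$ and $\theta(x+se-z)-\theta(x+\xi e-se-z)$ the separations are exactly $(\xi+2s)e$ and $(\xi-2s)e$, so the bounds $\omega(\xi+2s)$ and $\omega(|\xi-2s|)$ hold with no transverse loss at all, and the only discrepancy left is in the kernel $|se+z|^{-2-\beta}\simeq |s|^{-2-\beta}$, which genuinely is a multiplicative constant after integrating over $|z|\le c_0|s|$.

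The outer regime is where the strategy itself, not just its execution, breaks down: the pointwise inequality you need, namely $\theta(x-y)-\theta(x+\xi e-y)\le \omega(|s|+\xi)-\omega(|s|-\xi)-\omega(\xi)$, is false. Take $\omega(r)=\sqrt{r}$, $\xi=1$, $e=e_1$, $x=0$ and $\theta(w)=\max\big(0,\min(1,1-w_1)\big)$: the scenario \eqref{scena} holds, yet for $y=10e$ the left-hand side equals $\theta(-10e)-\theta(-9e)=0$ while the right-hand side is $\sqrt{11}-3-1<0$. No telescoping through $\xi$-translates can rescue this, because $\omega(a+\xi)-\omega(a)\le\omega(\xi)$ only yields the bound $\omega(|s|+\xi)+\omega(|s|-\xi)-\omega(\xi)$ with a \emph{plus} sign, which is useless for large $|s|$. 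The second integral of \eqref{D2} is intrinsically an integrated statement (it encodes that $\theta$ cannot realize near-extremal increments on many disjoint $\xi$-translates of the extremal pair simultaneously), and the proof the paper points to via [Kis], [KNV] and Remark \ref{rmk-moc} extracts it differently: one tests $\theta$ against the \emph{difference} of the two kernels of $e^{-h\Lambda^\beta}$ centred at $x$ and at $x+\xi e$, reflects the integration variable across the perpendicular bisector of the segment (under which reflection the transverse components cancel identically and the kernel difference has a sign by radial monotonicity), integrates out the transverse variable --- whence the one-dimensional constant $c_{\beta,1}$ of Remark \ref{rmk-moc} --- and lets $h\to0$; both integrals of \eqref{D2} then emerge from a single change of variables. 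You should replace Step 2 by that argument.
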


\begin{remark}\label{rmk-moc}
  Let $\mathcal{P}^{\beta,d}_h(x)$ be the $d$-dimensional kernel of the semigroup operator $e^{-h \Lambda^\beta}$,
then we have $\mathcal{P}^{\beta,d}_h(x)= h^{-\frac{d}{\beta}} \mathcal{P}^{h,d} ( h^{-\frac{1}{\beta}}x)$ and $\mathcal{P}^{h,d}(x)=\mathcal{F}^{-1}(e^{-|\zeta|^\beta})(x)$
satisfies that (see \cite[Th 3.1]{BSS})
\begin{equation}
  \frac{c_{\beta,d}}{1+ |x|^{d+\beta}}\leq \mathcal{P}^{\beta,d}(x) \leq \frac{C_{\beta,d}}{1+ |x|^{d+\beta}},
\end{equation}
with $c_{\beta,d}$ and $C_{\beta,d}$ are positive constants depending only on $\beta,d$.
Note that the constant $C_1$ in \eqref{D2} and the constant $c_{\beta,1}$ are the same.
Besides, according to the proof of Th 3.1 in \cite{BSS}, if $\beta$ satisfies $0<c_0 \leq \beta \leq1$ with some explicit constant $c_0\in(0,1)$, then $C_1$ will not depend on $\beta$.
\end{remark}

Next we consider the estimation of \eqref{GC-Om} under the scenario \eqref{scena}.
\begin{lemma}\label{lem-mocdrf}
  Let $u= \nabla^\perp \D^{\beta-2} m(\Lambda)\theta $, we have the following estimates on $\Omega(\xi,t)$ under scenario \eqref{scena}.
\begin{enumerate}[(1)]
\item
If $m$ is a nondecreasing function satisfying the assumptions (A1)(A2)(A4)(A5), then for all $\xi>0$,
\begin{equation}\label{Ome-es1}
\begin{split}
  \Omega(\xi,t)  \leq -C_2\xi m(\xi^{-1}) D(\xi,t) + C_2 \xi\int_\xi^\infty \frac{\omega(\eta,t)m(\eta^{-1})}{\eta^{1+\beta}}{d}\eta +C_2 \xi^{1-\beta} m(\xi^{-1})\omega(\xi,t) ,
\end{split}
\end{equation}
with $C_2= \frac{C_0}{\beta}$ with $C_0$ a fixed constant.
\item
If $m$ is a nondecreasing function satisfying the assumptions (A1)-(A4) and if we only consider $\xi \leq \frac{1}{2 b_1} $, then
\begin{equation}\label{Ome-es1-2}
\begin{split}
  \Omega(\xi,t)  \leq -C_2\xi m(\xi^{-1}) D(\xi,t) + C_2 \xi\int_\xi^\infty \frac{\omega(\eta,t)m(\eta^{-1})}{\eta^{1+\beta}}{d}\eta +C_2 \xi^{1-\beta} m(\xi^{-1})\omega(\xi,t) .
\end{split}
\end{equation}
\item
We also have
\begin{equation}\label{Ome-es2}
\begin{split}
  \Omega(\xi,t)  \leq  C \int_0^\xi \frac{\omega(\eta,t) m(\eta^{-1})}{\eta^\beta} \dd \eta + C \xi\int_\xi^\infty \frac{\omega(\eta,t)m(\eta^{-1})}{\eta^{1+\beta}}\mathrm{d}\eta,
\end{split}
\end{equation}
with $C>0$ depending only on $\beta$.
\end{enumerate}
\end{lemma}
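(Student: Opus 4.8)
The plan is to represent the velocity by a singular integral, exploit the oddness of its kernel together with the scenario \eqref{scena}, and split into an inner region $\{|z-x|<2\xi\}$ and an outer region $\{|z-x|\ge 2\xi\}$; the outer region will produce the second (integral) term in \eqref{Ome-es1}--\eqref{Ome-es1-2} and in \eqref{Ome-es2}, while the inner region will be compared with $D(\xi,t)$ for parts (1)--(2) and estimated directly for part (3).

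First I would write $u=\nabla^\perp\Lambda^{\beta-2}m(\Lambda)\theta=\mathcal{K}_\beta\ast\theta$, where $\mathcal{K}_\beta=(-K_{\beta,2},K_{\beta,1})$ is the vector kernel of Lemma~\ref{lem:kernel}(1): it is odd and, by \eqref{kernel1}--\eqref{kernel2}, $|\mathcal{K}_\beta(w)|\le C|w|^{-1-\beta}m(|w|^{-1})$ and $|\nabla\mathcal{K}_\beta(w)|\le C|w|^{-2-\beta}m(|w|^{-1})$, the constant $C$ being independent of $\alpha,\beta$ (since $C_0'$ is). Writing $y=x+\xi e$ and using that the principal value of $\int\mathcal{K}_\beta$ vanishes by oddness, one has, in the principal-value sense,
\[
(u(x)-u(y))\cdot e=\int_{\R^2}\big(\mathcal{K}_\beta(x-z)-\mathcal{K}_\beta(y-z)\big)\cdot e\,\big(\theta(z)-\theta(x)\big)\,dz ,
\]
which I split over $B:=\{|z-x|<2\xi\}$ and $B^c$ (on $B^c$ one has $\tfrac12|x-z|\le|y-z|\le\tfrac32|x-z|$). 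On $B^c$ I use $|\mathcal{K}_\beta(x-z)-\mathcal{K}_\beta(y-z)|\le\xi\sup_{[x-z,\,y-z]}|\nabla\mathcal{K}_\beta|\le C\xi|x-z|^{-2-\beta}m(|x-z|^{-1})$ together with $|\theta(z)-\theta(x)|\le\omega(|z-x|)$ from \eqref{scena}, and integrate radially; this contributes at most $C\xi\int_\xi^\infty\frac{\omega(\eta)m(\eta^{-1})}{\eta^{1+\beta}}\,d\eta$.

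For part (3) it then only remains to bound the inner integral over $B$: estimating $|\mathcal{K}_\beta(x-z)\cdot e|\le C|x-z|^{-1-\beta}m(|x-z|^{-1})$, integrating against $\omega(|x-z|)$, and using the concavity of $\omega$ and the monotonicity of $m$ gives $C\int_0^\xi\frac{\omega(\eta)m(\eta^{-1})}{\eta^\beta}\,d\eta$; the $\mathcal{K}_\beta(y-z)$ term is handled the same way after recentering at $y$, and together with the outer bound this yields \eqref{Ome-es2}. For parts (1)--(2) I instead compare the inner region with the dissipation. The key observation is that, by \eqref{m-fac2}--\eqref{m-fac3} of Lemma~\ref{lem:m} applied with $\rho=1$, the map $r\mapsto r^{-1}m(r)$ is non-increasing on $(0,\infty)$ under (A5) (namely \eqref{eq:cond2}), and non-increasing on $(b_1,\infty)$ under (A3) (namely \eqref{eq:cond5}); consequently — globally in case (1), and in case (2) because $\xi\le\frac1{2b_1}$ forces $|w|^{-1}\ge b_1$ whenever $|w|\le 2\xi$ — the function $|w|\mapsto|w|\,m(|w|^{-1})$ is non-decreasing on $(0,2\xi]$, so $|w|\,m(|w|^{-1})\le 2\xi\,m((2\xi)^{-1})\le 2\xi\,m(\xi^{-1})$, and hence
\[
|\mathcal{K}_\beta(w)|\le C\,|w|^{-2-\beta}\big(|w|\,m(|w|^{-1})\big)\le C\,\xi\,m(\xi^{-1})\,|w|^{-2-\beta},\qquad |w|<2\xi .
\]
That is, on the inner region the velocity kernel is pointwise dominated by $\xi\,m(\xi^{-1})$ times the kernel $|w|^{-2-\beta}$ governing $\Lambda^\beta$. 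Feeding this bound into the inner integral while retaining the principal-value cancellations of $\theta(z)-\theta(x)$ (so that nothing is lost when $\omega$ is flat near $0$) and re-expressing the outcome via the scenario \eqref{scena}, the inner part of $(u(x)-u(y))\cdot e$ is bounded by $C\xi\,m(\xi^{-1})$ times a quantity of precisely the type appearing on the right-hand side of \eqref{D2}; invoking Lemma~\ref{lem-mocdiss}, which controls $-D(\xi,t)$ from below by exactly such a quantity, and absorbing the remaining lower-order terms into $\xi^{1-\beta}m(\xi^{-1})\omega(\xi)$, I obtain $-C_2\xi\,m(\xi^{-1})D(\xi,t)+C_2\xi^{1-\beta}m(\xi^{-1})\omega(\xi)$. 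Combined with the outer bound this gives \eqref{Ome-es1} (resp. \eqref{Ome-es1-2}), with $C_2=C_0/\beta$ once the $\beta$-dependence of the radial integrations is tracked and the $\alpha,\beta$-independence of $C_0'$ is used.

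The step I expect to be the main obstacle is precisely this inner comparison with $D(\xi,t)$: after the kernel bound one is left with $\theta(z)-\theta(x)$ integrated against $|x-z|^{-2-\beta}$ and $|y-z|^{-2-\beta}$ over $B$, and this must be matched with the \emph{specific} form of the upper bound \eqref{D2}, which requires carefully keeping the oscillatory cancellation and showing that the remainder terms — coming from recentering at $y$, from the annulus $\xi\le|z-x|<2\xi$, and from the fact that here $\omega$ is allowed to be \emph{unbounded} — are controlled by $\xi^{1-\beta}m(\xi^{-1})\omega(\xi)$ and by the outer integral rather than by $\|\theta\|_{L^\infty}$. The restriction $\xi\le\frac1{2b_1}$ in part (2) is exactly what makes the needed monotonicity of $r\mapsto r^{-1}m(r)$ available from the weaker hypothesis (A3), which is why (A5) is used for all $\xi$ in part (1) but can be relaxed to (A3) once $\xi$ is small.
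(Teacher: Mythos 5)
Your overall architecture coincides with the paper's: split the difference $u(x)-u(x+\xi e)$ at $|y|=2\xi$, use the kernel bounds of Lemma \ref{lem:kernel} for the outer part, and on the inner part use the monotonicity of $r\mapsto r\,m(r^{-1})$ (valid for all $r>0$ under (A5), and for $0<r\le b_1^{-1}$ under (A3), which is exactly why part (2) restricts to $\xi\le\frac{1}{2b_1}$) to dominate the velocity kernel pointwise by $C\,\xi\,m(\xi^{-1})\,|y|^{-2-\beta}$. Your reading of where (A5) versus (A3) enters, and of how $C_2=C_0/\beta$ arises, matches the paper.

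The step you yourself flag as the main obstacle is, however, a genuine gap as proposed. You want to bound the inner contribution by $C\xi m(\xi^{-1})$ times ``a quantity of precisely the type appearing on the right-hand side of \eqref{D2}'' and then invoke Lemma \ref{lem-mocdiss}. But \eqref{D2} is an upper bound for $D(\xi,t)$ obtained from particular second-difference configurations of the modulus, and the inner velocity integral, after the kernel domination, does not organize itself into those configurations without essentially re-deriving \eqref{D2} for a different kernel; this is why your route stalls. The paper instead compares with the \emph{exact} integral representation \eqref{Dexp} of the dissipation (from \cite{DI,CC}),
\begin{equation*}
D(\xi,t)=C_\beta\int_{\R^2}\frac{\theta(x-y)-\theta(x+\xi e-y)-\omega(\xi,t)}{|y|^{2+\beta}}\,\dd y,
\end{equation*}
whose integrand is pointwise nonpositive under the scenario \eqref{scena}. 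Writing the inner term $I_1$ (via the oddness of the kernel) as the integral of the same nonpositive quantity against $\frac{y^\perp}{|y|}H_\beta(y)$ over $|y|\le 2\xi$, the whole comparison $I_1+B\,\xi m(\xi^{-1})D\le 0$ reduces to the one-line pointwise inequality $C_0'\frac{m(|y|^{-1})}{|y|^{1+\beta}}\le C_\beta B\,\frac{\xi m(\xi^{-1})}{|y|^{2+\beta}}$ on $|y|\le 2\xi$ (take $B=4C_0'/C_\beta$), the restriction of the $D$-integral to $|y|\le 2\xi$ being harmless precisely by sign-definiteness. This exact representation is the ingredient missing from your proposal. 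A smaller bookkeeping point: in the paper the term $C_2\xi^{1-\beta}m(\xi^{-1})\omega(\xi,t)$ comes from the \emph{outer} part $I_2$ (the mismatch between the domains $\{|x-z|\ge 2\xi\}$ and $\{|x+\xi e-z|\ge 2\xi\}$), whereas your outer bound as stated omits it and you instead attribute it to inner remainders.
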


\begin{proof}[{\bf{Proof of Lemma \ref{lem-mocdrf}}}]
We first prove \eqref{Ome-es1}. For simplicity, we suppress the time variable in $\omega(\xi,t)$, $\Omega(\xi,t)$ and $D(\xi,t)$.
Using lemma \ref{lem:kernel}, we find
\begin{equation*}
\begin{split}
  |u(x)-u(y)| & = \left|{P.V.}\int_{\mathbb{R}^2}\frac{y^\bot}{|y|}H_\beta(y)f(x-y){d}y
  - {P.V.}\int_{\mathbb{R}^2}\frac{y^\bot}{|y|}H_\beta(y)f(x+\xi e -y){d}y\right| \\
  & \leq  |I_{1}(\xi)| + |I_2(\xi)|,
\end{split}
\end{equation*}
where $y^\perp=(-y_2,y_1)$, $H_\beta$ is a radial-valued scalar function satisfying \eqref{kernel1}, and
\begin{equation}\label{Ixi}
  I_1(\xi) \equiv {P.V.}\, \int_{|y|\leq 2\xi} \frac{y^\bot}{|y|}H_\beta(y)\theta(x-y) \dd y - {P.V.} \,\int_{|y|\leq 2\xi} \frac{y^\bot}{|y|}H_\beta(y) \theta(x+\xi e- y) \dd y ,
\end{equation}
and
\begin{equation*}
\begin{split}
  I_2(\xi) \equiv &\int_{|y|\geq 2\xi} \frac{y^\bot}{|y|}H_\beta(y) \theta(x-y) \dd y - \int_{|y|\geq 2\xi} \frac{y^\bot}{|y|}H_\beta(y) \theta(x+\xi e- y) \dd y \\
  = & \int_{|x-y|\geq 2\xi} \frac{(x-y)^\bot}{|x-y|}H_\beta(x-y) \theta(y) \dd y - \int_{|x+\xi e-y|\geq 2\xi} \frac{(x+\xi e-y)^\bot}{|x+\xi e-y|}H_\beta(x+\xi e-y) \theta(y) \dd y.
\end{split}
\end{equation*}
The term $I_2(\xi) $ is controlled as usually (see e.g. \cite{DKV}), and it is bounded by
\begin{equation}\label{IIes}
  C_0\xi\int_{\xi}^\infty \frac{\omega(\eta)m(\eta^{-1})}{\eta^{1+\beta}}{d}\eta + C_0 \xi^{1-\beta}m(\xi^{-1})\omega(\xi),
\end{equation}
where $C_0$ is a fixed constant that does not depend on $\alpha,\beta$.
To estimate $I_1(\xi)$, we observe that, thanks to the zero-average property of $\frac{y^\bot}{|y|}H_\beta(y)$ and the scenario \eqref{scena}, we have
\begin{equation*}
\begin{split}
  I_1(\xi) & =\, \int_{|y|\leq 2\xi} \frac{y^\bot}{|y|}H_\beta(y)(\theta(x-y)-\theta(x)) \dd y
  -\,\int_{|y|\leq 2\xi} \frac{y^\bot}{|y|}H_\beta(y) (\theta(x+\xi e- y)-\theta(x+\xi e)) \dd y  \\
  & = \, \int_{|y|\leq 2\xi} \frac{y^\bot}{|y|}H_\beta(y)\big(\theta(x-y)-\theta(x+\xi e -y) -\omega(\xi)\big) \dd y ,
\end{split}
\end{equation*}
where the integrals have to be understood in the principle value sense if needed.
Recalling that $D(\xi)$ defined by \eqref{GC-D} can be rewritten as (\cite[Thm 1]{DI} or \cite[Prop 2.1]{CC})
\begin{equation}\label{Dexp}
\begin{split}
  D(\xi) & = C_\beta \left( {P.V.} \int_{\R^2} \frac{\theta(x-y)-\theta(x)}{|y|^{2+\beta}} \dd y
  - {P.V.}\int_{\R^2} \frac{\theta(x+\xi e-y )-\theta(x+\xi e)}{|y|^{2+\beta}} \dd y\right)\\
  & = C_\beta\int_{\R^2} \frac{1}{|y|^{2+\beta}} \big( \theta(x-y)-\theta(x+\xi e- y)-\omega(\xi)\big)\dd y ,
\end{split}
\end{equation}
with $C_\beta= \frac{\beta\, \Gamma(1+\beta/2)}{2\pi^{1+\beta} \Gamma(1-\beta/2)}(\geq \frac{\beta}{C_0})$, we obtain that for some constant $B >0$ to be chosen later,
\begin{equation*}
\begin{split}
   I_1(\xi)+B \xi m(\xi^{-1}) D(\xi) \leq \, & \, \int_{|y|\leq 2\xi} \left( \frac{y^\bot}{|y|}H_\beta(y) - C_\beta B \xi m(\xi^{-1}) \frac{1}{|y|^{2+\beta}}\right)
  \big(\omega(\xi) +\theta(x+\xi e -y)-\theta(x-y)\big) \dd y \\
  \leq \, & \, \int_{|y|\leq 2\xi} \left(C_0' \frac{m(|y|^{-1})}{|y|^{1+\beta}} -   C_\beta B \frac{ \xi m(\xi^{-1})}{|y|^{2+\beta}}\right)
  \big(\omega(\xi) +\theta(x+\xi e -y)-\theta(x-y)\big) \dd y \\
  \leq \, & \, \int_{|y|\leq 2\xi} \left( 2C_0' - C_\beta B\right)\frac{\xi m( \xi^{-1})}{|y|^{2+\beta}}
  \big(\omega(\xi) +\theta(x+\xi e +y)-\theta(x+y)\big) \dd y,
\end{split}
\end{equation*}
where in the third line we used that
\begin{equation}\label{eq:fact-m}
  |y| m(|y|^{-1}) \leq (2\xi) m((2\xi)^{-1})\leq 2 \xi m(\xi^{-1}) \quad \textrm{for all  } 0<|y|\leq 2\xi.
\end{equation}
Thus by choosing $B=\frac{4C_0'}{C_\beta} $, we get
\begin{equation}\label{I-es1}
  |I_1(\xi)| \leq - B \xi m(\xi^{-1}) D(\xi),
\end{equation}
which combined with \eqref{IIes} leads to the desired inequality \eqref{Ome-es1}. \\

In order to prove (2), we first observe that since only the case $\xi\leq \frac{1}{2b_1}$ is considered, by using \eqref{m-fac} we see that we still have \eqref{eq:fact-m} and \eqref{I-es1}, thus collecting \eqref{I-es1} and \eqref{IIes} yields \eqref{Ome-es1-2}.

 The proof of \eqref{Ome-es2} is classical, see e.g. \cite{DKV}, and we omit the details.
\end{proof}

\section{Proof of Theorem \ref{thm:GE}: Global existence of weak solution for the inviscid equation \eqref{eq:gSQG}}\label{sec:ws}

We consider the following approximated system (by adding a vanishing viscosity term) of the $(gSQG)_\beta$ equation \eqref{eq:gSQG} in the inviscid case (that is  $\nu=0$):
\begin{equation}\label{ap-gSQG}
  \partial_t\theta^\epsilon + u^\epsilon \cdot\nabla \theta^\epsilon - \epsilon \Delta\theta^\epsilon =0,
  \quad u^\epsilon=\nabla^\perp \D^{\beta-2}m(\Lambda)(\theta^\epsilon),
  \quad  \theta^\epsilon_0=\phi_\epsilon*\theta_0,
\end{equation}
where $\phi_\epsilon(x)=\epsilon^{-d}\phi(\epsilon^{-1}x)$,
and $\phi\in C^\infty_c(\R^2)$ is a radial test function satisfying $\int_{\R^2}\phi=1$. \\
 For the initial data, we observe that since $\theta_0\in L^1\cap L^2(\R^2)$, by Young's inequality we have
$$\|\theta_0^\epsilon\|_{L^1\cap L^2(\R^2)} \leq \|\theta_0\|_{L^1\cap L^2(\R^2)} \ \ {\rm{and}} \ \ \|\theta_0^\epsilon\|_{H^s}\lesssim_{\epsilon,s}\|\theta_0\|_{L^2}, \ \ {\rm{for \ every}} \ s>0. $$

\noindent We have the following global well-posedness result for the approximated system \eqref{ap-gSQG}.
\begin{proposition}\label{prop:glob}
  Let $\epsilon>0$, $\beta\in (0,1]$, and $m$ satisfying (A1)-(A4).
Then, the Cauchy problem for the approximated drift-diffusion equation \eqref{ap-gSQG} admits a unique solution $\theta^\epsilon(x,t)$ such that
$\theta^\epsilon\in C([0,\infty); H^s(\R^2))\cap C^\infty((0,\infty)\times \R^2)$ where $s>2$.
\end{proposition}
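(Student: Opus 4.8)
The plan is to treat \eqref{ap-gSQG} as a semilinear parabolic equation and argue in four stages: local well-posedness in $H^s(\R^2)$ for $s>2$, $L^p$ a priori bounds coming from the divergence-free structure of the drift, global continuation, and $C^\infty$ smoothing.

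\textbf{Local existence and uniqueness.} I would write \eqref{ap-gSQG} in its mild (Duhamel) form
\begin{equation*}
  \theta^\epsilon(t)=e^{\epsilon t\Delta}\theta_0^\epsilon-\int_0^t e^{\epsilon(t-\tau)\Delta}\,\nabla\!\cdot\!\big(u^\epsilon(\tau)\,\theta^\epsilon(\tau)\big)\,d\tau ,
\end{equation*}
using $u^\epsilon\cdot\nabla\theta^\epsilon=\nabla\cdot(u^\epsilon\theta^\epsilon)$ since $\nabla\cdot u^\epsilon=0$, and run a contraction-mapping argument in a ball of $C([0,T];H^s(\R^2))$, tracking in addition the $L^1$ norm of the iterates. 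The heat semigroup gains one derivative at the integrable cost $\|e^{\epsilon r\Delta}\nabla\!\cdot\! F\|_{H^s}\lesssim(\epsilon r)^{-1/2}\|F\|_{H^s}$, and the velocity $u^\epsilon=\na^\perp\Lambda^{\beta-2}m(\Lambda)\theta^\epsilon$ has a Fourier symbol which, by (A1)--(A3) and \LEM{lem:mBern}, is of order $\beta+\alpha-1<1$ at high frequencies and behaves like $|\zeta|^{\beta+\lambda-1}$ with $\beta+\lambda-1>-1$ near the origin by (A4); since $\theta_0^\epsilon=\phi_\epsilon*\theta_0\in L^1\cap H^{s'}$ for every $s'$, the $L^1$ control keeps $\widehat{\theta^\epsilon}$ bounded so that the low-frequency singularity of $u^\epsilon$ is harmless, and one obtains $\|u^\epsilon\|_{H^s}\lesssim\|\theta^\epsilon\|_{H^s}+\|\theta^\epsilon\|_{L^1\cap L^2}$. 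Combined with the product rule in $H^s$ (valid since $s>1$) this bounds the nonlinearity, yielding a unique solution on $[0,T]$ with $T=T(\epsilon,\|\theta_0^\epsilon\|_{H^s\cap L^1})>0$; uniqueness on the common interval follows from an $L^2$ estimate for the difference of two solutions and Grönwall's inequality. (Alternatively one could use a Friedrichs/Galerkin truncation together with uniform bounds and compactness.)

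\textbf{A priori bounds and global continuation.} Because $u^\epsilon$ is divergence free, multiplying \eqref{ap-gSQG} by $|\theta^\epsilon|^{p-2}\theta^\epsilon$ and integrating gives $\|\theta^\epsilon(t)\|_{L^p}\le\|\theta_0^\epsilon\|_{L^p}\le\|\theta_0\|_{L^p}$ for all $p\in[1,\infty]$ on the whole life span, so $\theta^\epsilon$ stays bounded in $L^1\cap L^2\cap L^\infty$. To preclude finite-time blow-up I would run the $\dot H^s$ energy estimate, which after the divergence-free commutator identity reads
\begin{equation*}
  \tfrac12\tfrac{d}{dt}\|\theta^\epsilon\|_{\dot H^s}^2+\epsilon\|\theta^\epsilon\|_{\dot H^{s+1}}^2=-\int\big([\Lambda^s,u^\epsilon]\!\cdot\!\nabla\theta^\epsilon\big)\,\Lambda^s\theta^\epsilon\,dx ,
\end{equation*}
and a Kato--Ponce type commutator estimate bounds the right-hand side by a constant times $\big(\|\nabla u^\epsilon\|_{L^\infty}\|\theta^\epsilon\|_{\dot H^s}+\|\Lambda^s u^\epsilon\|_{L^2}\|\nabla\theta^\epsilon\|_{L^\infty}\big)\|\theta^\epsilon\|_{\dot H^s}$. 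Each factor $\|\nabla u^\epsilon\|_{L^\infty}$, $\|\Lambda^s u^\epsilon\|_{L^2}$, $\|\nabla\theta^\epsilon\|_{L^\infty}$ is controlled, via the mapping properties of $\na^\perp\Lambda^{\beta-2}m(\Lambda)$ (order $<1$ at high frequencies, plus the low-frequency part bounded by $\|\theta^\epsilon\|_{L^1\cap L^2}$) and Gagliardo--Nirenberg interpolation against the a priori $L^\infty\cap L^2$ bounds, by $\|\theta^\epsilon\|_{\dot H^{s+1}}^{a}$ times an a priori bounded quantity with $a<1$, provided $s$ is taken large enough (depending only on $\alpha,\beta$). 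Young's inequality then absorbs the top order contribution into $\tfrac{\epsilon}{2}\|\theta^\epsilon\|_{\dot H^{s+1}}^2$ and leaves $\tfrac{d}{dt}\|\theta^\epsilon\|_{H^s}^2\le C_\epsilon\big(1+\|\theta^\epsilon\|_{H^s}^2\big)$, so Grönwall makes $\|\theta^\epsilon(t)\|_{H^s}$ finite on every $[0,T]$. The local solution therefore extends to $[0,\infty)$ for this large $s$, hence for every $2<s'\le s$ by uniqueness; being in $H^{s}$ for every large $s$, the solution lies in $C([0,\infty);C^\infty(\R^2))$, and differentiating the equation in time gives $\theta^\epsilon\in C^\infty((0,\infty)\times\R^2)$ (in fact up to $t=0$ since the data is smooth).

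\textbf{Main obstacle.} The only point beyond routine parabolic theory is the control of $u^\epsilon$: the operator $\na^\perp\Lambda^{\beta-2}m(\Lambda)$ is of positive order (up to, but strictly less than, $1$) at high frequencies and singular at low frequencies, so $u^\epsilon$ is genuinely not lower order than $\theta^\epsilon$. The low-frequency singularity is dispatched by propagating the $L^1$ bound together with (A4); the high-frequency behaviour is handled by \LEM{lem:mBern} and the kernel bounds of \LEM{lem:kernel}; and the resulting supercritical term in the $\dot H^s$ estimate is tamed only through a careful interpolation against the a priori $L^\infty\cap L^2$ bounds, balanced against the $\epsilon\Delta$ smoothing. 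Once this is set up, the remaining arguments are standard.
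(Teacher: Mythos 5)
Your proposal is correct in outline but follows a genuinely different route from the paper. For local existence the paper simply cites Prop.~3.1 of \cite{XZ}, and its entire written proof (in the Appendix) concerns global continuation, which is done not by energy estimates but by the nonlocal maximum principle of Proposition \ref{prop-GC}: one introduces the stationary moduli of continuity $\omega_\lambda(\xi)=\lambda^{2-\alpha-\beta}\omega(\lambda\xi)$ with $\omega(\xi)=\xi-\xi^{3/2}$ near the origin, checks that $\theta_0^\epsilon$ obeys $\omega_\lambda$ once $\lambda$ is chosen as in \eqref{eq:lambda}, and shows preservation because the viscous contribution $\epsilon\,\omega''(\xi)=-\tfrac34\epsilon\,\xi^{-1/2}$ dominates the drift contribution $\Omega(\xi)\omega'(\xi)\lesssim \delta^{2-\alpha-\beta}$ on $(0,\delta/2)$; this yields $\sup_{t<T^*}\|\nabla\theta^\epsilon\|_{L^\infty}\le\lambda$ uniformly in time, and the regularity criterion of \cite{XZ} closes the argument. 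Your $\dot H^s$ energy estimate with a Kato--Ponce commutator, Gagliardo--Nirenberg interpolation against the conserved $L^2\cap L^\infty$ norms, and absorption into $\epsilon\|\theta^\epsilon\|_{\dot H^{s+1}}^2$ also works: the exponents close precisely because the total order $\alpha+\beta$ of $\nabla u^\epsilon$ is strictly below $2$, at the price of taking $s$ larger than roughly $\frac{\alpha+\beta}{2-\alpha-\beta}$, which is harmless since the mollified data lies in every $H^s$. The paper's route reuses the modulus-of-continuity machinery of Section 3 and gives a time-uniform Lipschitz bound; yours is more elementary and self-contained but depends on $\epsilon$ through Gr\"onwall.

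One inaccuracy to repair in your local step: the bound $\|u^\epsilon\|_{H^s}\lesssim\|\theta^\epsilon\|_{H^s}+\|\theta^\epsilon\|_{L^1\cap L^2}$ is false when $\alpha+\beta>1$ (which the hypotheses allow, since $\alpha$ may be close to $1$ and $\beta=1$ is admitted): by Lemma \ref{lem:mBern} the high-frequency part of $\nabla^\perp\Lambda^{\beta-2}m(\Lambda)$ has \emph{positive} order $\alpha+\beta-1$, so the correct estimate is $\|u^\epsilon\|_{H^s}\lesssim\|\theta^\epsilon\|_{H^{s+\alpha+\beta-1}}+\|\theta^\epsilon\|_{L^1\cap L^2}$. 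As written, your contraction in $C([0,T];H^s)$ with a one-derivative gain from the heat semigroup does not close. The fix is immediate: have the semigroup absorb $\alpha+\beta$ derivatives rather than one, at the cost $(\epsilon r)^{-(\alpha+\beta)/2}$, which remains integrable in $r$ since $\alpha+\beta<2$; alternatively, cite \cite{XZ} for local existence as the paper does.
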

Since the local existence part has already been done in \cite{XZ} (see Prop 3.1), we just need to prove the global existence part, and this is done in the Appendix section. \\

Using the usual $L^p$-estimate for transport-diffusion equation (see e.g. \cite{CC}), we have
\begin{equation}\label{L1L2bdd}
  \|\theta^\epsilon(t)\|_{L^1\cap L^2(\R^2)} \leq \|\theta_0\|_{L^1\cap L^2(\R^2)} \quad \textrm{for all  $t\geq 0$,}
\end{equation}
that is, $\theta^\epsilon\in L^\infty(\R^+; L^1\cap L^2(\R^2))$ uniformly in $\epsilon$. Since $L^\infty(\R^+; L^2(\R^2))$ is the dual space of the separable Banach space $L^1(\R^+; L^2(\R^2))$,
we can extract a subsequence $\{\theta^{\epsilon_k}\}_{k\geq0}$ from these solutions $\{\theta^\epsilon\}_{\epsilon>0}$ so that $\theta^{\epsilon_k}$
 converges $*$-weakly to some function $\theta$ in $L^\infty(\R^+; L^2(\R^2))$ (as $k\rightarrow \infty$ and $\epsilon_k\rightarrow 0$) and also in $\mathcal{D}'(\R^+\times \R^2)$.
Actually, the weak convergence of $\theta^{\epsilon_k}$ is not enough to conclude, since we only have $\partial_t\theta + \displaystyle\lim_{k\rightarrow \infty} (\nabla\cdot (u^{\epsilon_k} \theta^{\epsilon_k}))=0$ in $\mathcal{D}'(\R^+\times\R^2)$.

Therefore, it remains to show that the nonlinear term $\nabla\cdot (u^{\epsilon_k}\theta^{\epsilon_k})$ converges to $\nabla\cdot(u\theta)$ in $\mathcal{D}'(\R^+\times\R^2)$ where $u=\nabla^\perp \D^{\beta-2} m(\Lambda)\theta$.
Let $\varphi\in \mathcal{D}(\R^+\times \R^2)$ be a test function, and there exist two nonnegative numbers $R,T$ such that the support of $\varphi$ is contained in $(0,T)\times B_R(0)$.
We will prove that as $k\rightarrow \infty$,
\begin{equation}\label{Targ-conv}
  \int_0^\infty \int_{\R^2} (\theta^{\epsilon_k} u^{\epsilon_k})\cdot\nabla \varphi(x,t) \dd x\dd t \rightarrow \int_0^\infty \int_{\R^2} (\theta u)\cdot\nabla \varphi(x,t) \dd x\dd t.
\end{equation}
\\

Let us set $\psi^\epsilon\equiv \D^{\beta-2} m(\Lambda) \theta^\epsilon$ which is the associated stream function,then, with this notation we infer that
\begin{equation}\label{u-psi-the}
  u^\epsilon =\nabla^\perp \psi^\epsilon,\quad \textrm{and}\quad\theta^\epsilon = \frac{\D^{2-\beta}}{m(\Lambda)}\psi^\epsilon.
\end{equation}
Recalling that $\mathcal{H}_0={Id}-S_0$ ($S_0$ is defined as \eqref{eq:LPop}), and since $\theta^\epsilon$ is a uniformly bounded sequence in the space $L^\infty(0,T; L^1\cap L^2(\R^2))$,
we find that
\begin{equation}\label{eq:psiEs3}
  \mathcal{H}_0\psi^\epsilon \in L^\infty(0,T; H^{2-\alpha-\beta}(\R^2))\quad \textrm{uniformly in $\epsilon$}.
\end{equation}
As a matter of fact, by using the property of the support of the Fourier transform of $\mathcal{H}_0 \psi^\epsilon$ and \eqref{m-fac2}, we obtain
\begin{equation}\label{eq:psiEs2}
\begin{split}
  \|\mathcal{H}_0\psi^\epsilon(t)\|_{H^{2-\alpha-\beta}(\R^2)} & \leq  C_0 \||\zeta|^{2-\alpha-\beta}\widehat{\psi^\epsilon}(\zeta,t)\|_{L^2(B_{1/2}^c)} \\
  & \leq C_0 \||\zeta|^{-\alpha}m(\zeta) \widehat{\theta^\epsilon}(\zeta,t)\|_{L^2(B_{1/2}^c)}  \\
  & \leq C_0 \||\zeta|^{-\alpha}m(\zeta) \widehat{\theta^\epsilon}(\zeta,t)\|_{L^2(\{1/2\leq |\zeta|\leq b_1\})} +  C_0 \||\zeta|^{-\alpha}m(\zeta) \widehat{\theta^\epsilon}(\zeta,t)\|_{L^2(B_{b_1}^c)} \\
  & \leq C_0 m(b_1) \|\theta^\epsilon(t)\|_{L^2(\R^2)} + C_0 b_1^{-\alpha}m(b_1)  \|\theta^\epsilon(t)\|_{L^2(\R^2)} \\
  & \leq C_0 m(b_1)    \|\theta_0\|_{L^2(\R^2)}.
\end{split}
\end{equation}
We then claim that,
\begin{equation}\label{the-td2}
  \partial_t \theta^\epsilon \in L^\infty(0,T; H^{-5}(\R^2))\quad \textrm{uniformly in $\epsilon$}.
\end{equation}
Indeed, by integrating by parts we infer that for any $\phi\in \mathcal{D}(\R^2)$, we have
\begin{align}\label{the-td-es1}
  \left|\int_{\R^2} \partial_t \theta^\epsilon(x,t) \phi(x)\dd x\right| & \leq \left| \int_{\R^2} u^\epsilon\cdot\nabla \theta^\epsilon(x,t)\, \phi(x) \dd x \right|
  + \epsilon \left|\int_{\R^2} \Delta \theta^\epsilon(x,t) \,\phi(x)\dd x \right| \nonumber\\
  & \leq \left| \int_{\R^2} \big(\theta^\epsilon u^\epsilon(x,t)\big)\cdot\nabla \phi(x) \dd x \right|
  + \epsilon \left|\int_{\R^2} \theta^\epsilon(x,t) \,\Delta\phi(x)\dd x \right| \\
  & \leq  \left| \int_{\R^2} \big(\theta^\epsilon u^\epsilon(x,t)\big)\cdot\nabla \phi(x) \dd x \right| + \|\theta_0\|_{L^2(\R^2)} \|\Delta\phi\|_{L^2(\R^2)}.\nonumber
\end{align}
Using \eqref{u-psi-the} and by integrating by parts, we find the following decomposition
\begin{align}\label{eq:decom2}
  & \int_{\R^2}  \theta^\epsilon u^\epsilon \cdot\nabla\phi \,\dd x \nonumber \\
  = & \int_{\R^2}  \theta^\epsilon \,(S_0 u^\epsilon \cdot \nabla )\phi\,\dd x +
  \int_{\R^2} S_0 \theta^\epsilon \,(\mathcal{H}_0 u^\epsilon \cdot \nabla )\phi\,\dd x  +  \int_{\R^2} \mathcal{H}_0 \theta^\epsilon \,(\mathcal{H}_0 u^\epsilon \cdot \nabla )\phi\,\dd x \nonumber\\
  = & \int_{\R^2}  \theta^\epsilon \,(S_0 u^\epsilon \cdot \nabla )\phi\,\dd x -
  \int_{\R^2} \mathcal{H}_0 \psi^\epsilon \,(\nabla^\perp S_0 \theta^\epsilon \cdot \nabla )\phi\,\dd x  +  \int_{\R^2} \mathcal{H}_0 \theta^\epsilon \,(\mathcal{H}_0 u^\epsilon \cdot \nabla )\phi\,\dd x   .
\end{align}
It follows from \eqref{L1L2bdd} and the continuous embedding $L^1\cap L^2(\R^2)\hookrightarrow L^{\frac{2}{2-\lambda-\beta}}(\R^2)\hookrightarrow \dot H^{\lambda+\beta-1}(\R^2)$
valid for $\lambda\in [0,1-\beta]$, $\beta\in (0,1]$ that
\begin{align*}
  \left|\int_{\R^2} \theta^\epsilon \,(S_0 u^\epsilon \cdot \nabla) \phi\,\dd x\right|(t)
  & \leq \|\theta^\epsilon(t)\|_{L^2} \|\nabla^\perp \D^{\beta-2}m(\Lambda)S_0 \theta^\epsilon(t)\|_{L^2} \|\nabla \phi\|_{L^\infty(\R^2)} \\
  & \leq C_0 \|\theta_0\|_{L^2} \|\D^{\lambda+\beta-1}S_0 \theta^\epsilon(t)\|_{L^2} \|\phi\|_{H^3(\R^2)} \\
  & \leq
  \begin{cases}
  C_0 \|\theta_0\|_{L^1\cap L^2(\R^2)}^2 \|\phi\|_{H^3(\R^2)}, \quad & \textrm{for  }\lambda\in [0,1-\beta], \beta\in (0,1],\\
  C_0 \|\theta_0\|_{L^2(\R^2)}^2 \|\phi\|_{H^3(\R^2)},\quad & \textrm{for  }\lambda\in [1-\beta,1), \beta\in (0,1],
  \end{cases}
\end{align*}
For the second term, using H\"older's inequality, Plancherel's theorem and \eqref{eq:psiEs2}, we find
\begin{align*}
   \left|\int_{\R^2} \mathcal{H}_0 \psi^\epsilon (\nabla^\perp S_0 \theta^\epsilon  \cdot \nabla) \phi\,\dd x\right|(t)
  & \leq \|\mathcal{H}_0\psi^\epsilon(t)\|_{L^2} \|\nabla^\perp S_0 \theta^\epsilon(t)\|_{L^2} \|\nabla \phi\|_{L^\infty(\R^2)} \\
  & \leq C_0 \|\mathcal{H}_0 \theta^\epsilon(t)\|_{L^2} \|\theta^\epsilon(t)\|_{L^2} \|\phi\|_{H^3(\R^2)} \\
  & \leq C_0 \|\theta_0\|_{L^2(\R^2)}^2 \|\phi\|_{H^3(\R^2)}.
\end{align*}
In order to estimate the third term in \eqref{eq:decom2}, we need the following lemma which gives a new expression of the convection term in terms of the stream function via a controlled commutator.
\begin{lemma}\label{lem:exp0}
   For every $\phi\in\mathcal{D}(\R^2)$, we have the following equality
\begin{equation}\label{eq:Exp}
  \int_{\R^2}  \theta^\epsilon u^\epsilon \cdot\nabla\phi \,\dd x =
  \frac{1}{2} \int_{\R^2} \psi^\epsilon \, \left(\left[\frac{\D^{2-\beta}\nabla^\perp \cdot}{m(\Lambda)},\nabla\phi \right]\psi^\epsilon \right)\,\dd x.
\end{equation}
\end{lemma}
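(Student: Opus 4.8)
The plan is to derive \eqref{eq:Exp} from an elementary symmetrization. Recall from \eqref{u-psi-the} that $u^\epsilon=\nabla^\perp\psi^\epsilon$ and $\theta^\epsilon=P\psi^\epsilon$, where $P:=\frac{\D^{2-\beta}}{m(\Lambda)}$ is the scalar Fourier multiplier with real, even symbol $\frac{|\zeta|^{2-\beta}}{m(|\zeta|)}$; in particular $P$ is formally self-adjoint on $L^2(\R^2)$. The only pointwise identity needed is that, for any scalar $g$ and any $\phi\in\mathcal{D}(\R^2)$,
\begin{equation*}
  \nabla^\perp g\cdot\nabla\phi=\nabla^\perp\cdot(g\nabla\phi),
\end{equation*}
which holds because $\nabla^\perp\cdot\nabla\phi=-\partial_2\partial_1\phi+\partial_1\partial_2\phi=0$, so that when one expands the divergence only the term in which the derivative hits $g$ survives.

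Using this identity with $g=\psi^\epsilon$, I would first write $\int_{\R^2}\theta^\epsilon u^\epsilon\cdot\nabla\phi\,\dd x=\int_{\R^2}(P\psi^\epsilon)\,\nabla^\perp\cdot(\psi^\epsilon\nabla\phi)\,\dd x$, and then evaluate this integral in two ways. Integrating by parts (there is no boundary term since $\psi^\epsilon\nabla\phi$ is compactly supported) gives $-\int_{\R^2}\psi^\epsilon\,\nabla\phi\cdot\nabla^\perp(P\psi^\epsilon)\,\dd x$; alternatively, moving $P$ onto the other factor by self-adjointness and then applying the pointwise identity once more gives $\int_{\R^2}\psi^\epsilon\,P\big(\nabla\phi\cdot\nabla^\perp\psi^\epsilon\big)\,\dd x$. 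Averaging the two expressions and recognizing the commutator,
\begin{equation*}
  \int_{\R^2}\theta^\epsilon u^\epsilon\cdot\nabla\phi\,\dd x=\frac12\int_{\R^2}\psi^\epsilon\,\big[P,\,\nabla\phi\cdot\nabla^\perp\big]\psi^\epsilon\,\dd x .
\end{equation*}
Finally, using $\nabla\phi\cdot\nabla^\perp\psi^\epsilon=\nabla^\perp\cdot(\psi^\epsilon\nabla\phi)$ in the first term of the commutator and $P=\frac{\D^{2-\beta}}{m(\Lambda)}$ throughout, one has $\big[P,\nabla\phi\cdot\nabla^\perp\big]\psi^\epsilon=\frac{\D^{2-\beta}}{m(\Lambda)}\nabla^\perp\cdot(\psi^\epsilon\nabla\phi)-\nabla\phi\cdot\nabla^\perp\big(\tfrac{\D^{2-\beta}}{m(\Lambda)}\psi^\epsilon\big)$, which is precisely $\big[\frac{\D^{2-\beta}\nabla^\perp\cdot}{m(\Lambda)},\nabla\phi\big]\psi^\epsilon$ with $\nabla\phi$ acting by multiplication (turning the scalar $\psi^\epsilon$ into the vector $\psi^\epsilon\nabla\phi$). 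This is \eqref{eq:Exp}.

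The computation itself is short, so the only point requiring care — and the step I would be most careful about — is the legitimacy of the integration by parts and of the self-adjointness step, i.e.\ the absence of boundary terms and the validity of the Plancherel manipulation. Here I would invoke Proposition \ref{prop:glob}: since $\theta^\epsilon=P\psi^\epsilon\in C^\infty\big((0,\infty)\times\R^2\big)$ with $\theta^\epsilon(t)\in L^1\cap L^2\cap H^s(\R^2)$ for every $s$, and the symbol $|\zeta|^{\beta-2}m(|\zeta|)$ is controlled by $|\zeta|^{\beta+\lambda-2}$ near the origin (by (A4)) and by a fixed power of $|\zeta|$ at infinity (by \eqref{m-fac2}), the stream function $\psi^\epsilon(t)$ is a $C^1$ function; since $\phi$ is compactly supported, all the integrands above then have compact support, and a routine density/Fourier argument justifies the two steps. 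I would only indicate this verification briefly rather than carry it out in detail, as it is standard given the regularity already established.
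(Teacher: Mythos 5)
Your argument is correct and is essentially the paper's own proof: the paper likewise evaluates $\int\theta^\epsilon u^\epsilon\cdot\nabla\phi$ in two ways (once by integrating $\nabla^\perp$ by parts onto $\theta^\epsilon=\frac{\D^{2-\beta}}{m(\Lambda)}\psi^\epsilon$, once by moving the self-adjoint multiplier onto the other factor and using $\nabla^\perp\psi^\epsilon\cdot\nabla\phi=\nabla^\perp\cdot(\psi^\epsilon\nabla\phi)$) and sums the two identities to produce the commutator. Your added remark on justifying the integration by parts and the Plancherel step via the regularity from Proposition \ref{prop:glob} is a harmless (and reasonable) supplement to what the paper leaves implicit.
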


\begin{proof}[{\bf{Proof of Lemma \ref{lem:exp0}}}]
On one hand, by using to \eqref{u-psi-the}, we see that
\begin{eqnarray}\label{eq:vt-es1}
   \int_{\R^2} \theta^\epsilon u^\epsilon \cdot\nabla\phi\,\dd x &=& \nonumber \int_{\R^2} \big(\theta^\epsilon\nabla^\perp \psi^\epsilon\big)\cdot\nabla \phi \, \dd x  \\
  & =& \nonumber-\int_{\R^2} \psi^\epsilon\, \nabla^\perp \theta^\epsilon \cdot \nabla \phi\, \dd x \\
  &=&  -\int_{\R^2} \psi^\epsilon \,\left(\frac{\D^{2-\beta} \nabla^\perp}{m(\Lambda)}\psi^\epsilon\right)\cdot \nabla \phi \, \dd x,
\end{eqnarray}
On the other hand, we have
\begin{equation}\label{eq:vt-es2}
\begin{split}
  & \int_{\R^2} \theta^\epsilon u^\epsilon \cdot\nabla\phi\,\dd x
  =  \int_{\R^2} \left(\frac{\D^{2-\beta}}{m(\Lambda)}\psi^\epsilon\right)\, \nabla^\perp\psi^\epsilon \cdot\nabla \phi\,\dd x  \\
  & =  \int_{\R^2} \psi^\epsilon\, \left(\frac{\D^{2-\beta}}{m(\Lambda)}\left(\nabla^\perp\psi^\epsilon\cdot \nabla \phi\right)\right)\,\dd x
  =  \int_{\R^2} \psi^\epsilon\, \left(\frac{\D^{2-\beta}\nabla^\perp\cdot}{m(\Lambda)}\big( \psi^\epsilon \nabla \phi \big)\right)\,\dd x,
\end{split}
\end{equation}
To conclude, it suffices to sum the two previous equalities to find the desired commutator  \eqref{eq:Exp}.
\end{proof}
\noindent Then, thanks to Lemma \ref{lem:exp0}, \eqref{eq:comm}, \eqref{eq:psiEs2} and the fact $\frac{\Lambda^{2-\beta}}{m(\Lambda)}\psi^\epsilon= \theta^\epsilon$, we get
\begin{align*}
  \left|\int_{\R^2} \mathcal{H}_0 \theta^\epsilon \,\mathcal{H}_0 u^\epsilon \cdot \nabla \phi\,\dd x \right|(t)
  = & \frac{1}{2} \left|\int_{\R^2} \mathcal{H}_0\psi^\epsilon  \left(\left[\frac{\D^{2-\beta}\nabla^\perp \cdot}{m(\Lambda)},\nabla\phi \right]\mathcal{H}_0\psi^\epsilon \right)\dd x\right|(t) \\
  \leq & C\|\mathcal{H}_0\psi^\epsilon(t)\|_{L^2}\left(\|\mathcal{H}_0 \psi^\epsilon(t)\|_{L^2} + \|\theta^\epsilon(t)\|_{L^2} \right) \|\phi\|_{H^5(\R^2)} \\
  \leq & C \|\theta_0\|_{L^2(\R^2)}^2 \|\phi\|_{H^5(\R^2)}.
\end{align*}
Collecting \eqref{the-td-es1} and the above estimates allows us to conclude that for all $t\in [0,T]$, we have
\begin{align}
  \left|\int_{\R^2} \partial_t \theta^\epsilon(x,t) \phi(x)\dd x\right|\leq C \|\phi\|_{H^5(\R^2)},
\end{align}
which gives \eqref{the-td2}. \\

\noindent Then, using \eqref{the-td2}, one may further show that
\begin{equation}\label{psi-td2}
  \partial_t \mathcal{H}_0\psi^\epsilon \in L^\infty(0,T; H^{-5}(\R^2))\quad \textrm{uniformly in $\epsilon$}.
\end{equation}
Indeed, for all $\phi\in H^5(\R^2)$, using \eqref{the-td2} and the fact that $m(\zeta)\leq C_{b_1} |\zeta|^\alpha$ for all $|\zeta|\geq 1/2$, we find
\begin{equation*}
\begin{split}
  \left|\int_{\R^2} \partial_t \mathcal{H}_0\psi^\epsilon(x,t)\, \phi(x) \dd x\right| & = \left|\int_{\R^2} \partial_t \theta^\epsilon(x,t) \, \D^{\beta-2}m(\Lambda)\mathcal{H}_0\phi(x) \dd x\right| \\
  & \leq  \|\partial_t\theta^\epsilon(t)\|_{H^{-5}(\R^2)} \|\D^{\beta-2}m(\Lambda)\mathcal{H}_0\nabla\phi\|_{H^4(\R^2)} \\
  & \leq C \|\Lambda^{\beta+\alpha-1} \mathcal{H}_0 \phi\|_{H^4(\R^2)} \leq C \|\phi\|_{H^5(\R^2)},
\end{split}
\end{equation*}
therefore, we find \eqref{psi-td2}.\\

\noindent According to \eqref{eq:psiEs3}, \eqref{psi-td2}, the Ascoli's theorem and the weak convergence of $\theta^\epsilon$, we infer that $\mathcal{H}_0\psi^{\epsilon_k}$
up to a subsequence (still denoted $\mathcal{H}_0\psi^{\epsilon_k}$) satisfies, as $k\rightarrow \infty$, that
\begin{equation}\label{psiConv2}
  \mathcal{H}_0\psi^{\epsilon_k} \rightarrow \mathcal{H}_0\psi = \D^{\beta-2}m(\Lambda)\mathcal{H}_0\theta,\quad \textrm{in $L^\infty(0,T;L^2_{\mathrm{loc}}(\R^2))$},
\end{equation}
with $\mathcal{H}_0\psi \in L^\infty(0,T; H^{2-\alpha-\beta}(\R^2))$. \\

Next, we shall prove \eqref{Targ-conv}. For the right-hand side of \eqref{Targ-conv}, we use the decomposition \eqref{eq:decom2}.
Since we work on the whole space $\R^2$, and the strong convergence result \eqref{psiConv2} only holds on a compact domain, we need to make a suitable splitting with respect to the space variable.
Recalling that the support of the test function $\varphi$ lies in $(0,T)\times B_R$,
we shall define 3 other test functions namely $\eta$, $\rho$ and $\chi$ as follows. We first introduce $\eta\in \mathcal{D}(\R^2)$ such that $\eta\equiv 1$ on $B_R$ and $\mathrm{supp}\, \eta \subset B_{2R}$;
and then $\rho\in \mathcal{D}(\R^2)$ such that $\rho\equiv 1$ on $B_{2R}$ and $\mathrm{supp}\, \rho \subset B_{4R}$, and finally $\chi \in \mathcal{D}(\R^2)$ which is such that $\chi\equiv 1$ on $B_{8R}$.
Using these notations, we have
\begin{align}\label{eq:decom3}
  \int_0^T\int_{\R^2} \theta^{\epsilon_k} \,(S_0 u^{\epsilon_k} \cdot \nabla) \varphi\,\dd x \dd t =
  & \int_0^T\int_{\R^2}\theta^{\epsilon_k} \,\eta\left(\nabla^\perp \D^{\beta-2}m(\Lambda)\big( (S_0 \theta^{\epsilon_k}) (1-\chi)\big)\right) \cdot \nabla\varphi\,\dd x \dd t \nonumber\\
  & + \int_0^T\int_{\R^2} \chi\theta^{\epsilon_k} \,\left(\nabla^\perp \D^{\beta-2}m(\Lambda) \big( (S_0\theta^{\epsilon_k})\chi\big)\right) \cdot \nabla\varphi\,\dd x \dd t \nonumber\\
\equiv &\,\mathcal{I}_1(\theta^{\epsilon_k}) + \mathcal{I}_2(\theta^{\epsilon_k}),
\end{align}
and
\begin{align}\label{eq:decom4}
  \int_0^T\int_{\R^2} \mathcal{H}_0 \psi^{\epsilon_k}\,(\nabla^\perp S_0 \theta^{\epsilon_k}  \cdot \nabla) \varphi\,\dd x\dd t = & \int_0^T\int_{\R^2} (\mathcal{H}_0\psi^{\epsilon_k})
  \,\eta \left(\nabla^\perp S_0 \big(\theta^{\epsilon_k} (1-\chi)\big)\right) \cdot \nabla\varphi\,\dd x \dd t \nonumber \\
  & + \int_0^T\int_{\R^2} \eta(\mathcal{H}_0\psi^{\epsilon_k}) \,\big(\nabla^\perp S_0 ( \theta^{\epsilon_k}\chi)\big) \cdot \nabla\varphi\,\dd x \dd t \nonumber \\
 \equiv & \, \mathcal{I}_3(\theta^{\epsilon_k},\psi^{\epsilon_k}) + \mathcal{I}_4(\theta^{\epsilon_k},\psi^{\epsilon_k}).
\end{align}
We may rewrite the last term of the right-hand side of \eqref{eq:decom2}. Indeed, we have the following lemma.
\begin{lemma}\label{lem:exp2}
We have the following equality
\begin{align}\label{eq:Exp2}
  \int_0^T\int_{\R^2} \mathcal{H}_0\theta^{\epsilon_k}\, \mathcal{H}_0u^{\epsilon_k} \cdot\nabla \varphi\,\dd x \dd t
  = \,& \frac{1}{2} \int_0^T \int_{\R^2} (\mathcal{H}_0\psi^{\epsilon_k}) \rho  \left(\left[\frac{\D^{2-\beta}\nabla^\perp\cdot}{m(\Lambda)} , \nabla \varphi \right]\big( (\mathcal{H}_0\psi^{\epsilon_k})\chi\big)\right)\, \dd x \dd t \nonumber\\
  & -  \int_0^T \int_{\R^2} (\mathcal{H}_0\psi^{\epsilon_k}) \rho \left(\frac{\D^{2-\beta}\nabla^\perp}{m(\Lambda)}(\mathcal{H}_0\psi^{\epsilon_k})(1-\chi)\right) \cdot \nabla \varphi\, \dd x \dd t \nonumber \\
  & -  \int_0^T \int_{\R^2}  (\mathcal{H}_0\psi^{\epsilon_k}) \eta \left(\frac{\D^{2-\beta}\nabla^\perp}{m(\Lambda)}(\mathcal{H}_0\psi^{\epsilon_k}) (1-\rho)\right)\cdot \nabla \varphi\, \dd x \dd t \nonumber \\
  \equiv\, & \mathcal{I}_5(\psi^{\epsilon_k}) + \mathcal{I}_6(\psi^{\epsilon_k}) + \mathcal{I}_7(\psi^{\epsilon_k}).
\end{align}
\end{lemma}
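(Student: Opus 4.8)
The plan is to reduce \eqref{eq:Exp2} to a purely algebraic identity by running the proof of Lemma \ref{lem:exp0} with the high-frequency stream function $g:=\mathcal{H}_0\psi^{\epsilon_k}$ in the role of $\psi^\epsilon$, and then inserting the cut-off functions $\chi,\rho,\eta$ into the various copies of $g$; since for each fixed $k$ the approximate solution $\theta^{\epsilon_k}$ (hence $\psi^{\epsilon_k}$ and $g$) is smooth on $\mathrm{supp}\,\varphi$ and lies in a high-order Sobolev space, every integration by parts below is justified.

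First I would observe that $\mathcal{H}_0=\mathrm{Id}-S_0$ is a Fourier multiplier, hence commutes with $\nabla^\perp\D^{\beta-2}m(\Lambda)$ and with $\frac{\D^{2-\beta}}{m(\Lambda)}$, so that $\mathcal{H}_0u^{\epsilon_k}=\nabla^\perp g$ and $\mathcal{H}_0\theta^{\epsilon_k}=\frac{\D^{2-\beta}}{m(\Lambda)}g$. Repeating verbatim the two computations \eqref{eq:vt-es1}--\eqref{eq:vt-es2} — which used only the structural relations $u=\nabla^\perp\psi$, $\theta=\frac{\D^{2-\beta}}{m(\Lambda)}\psi$, the self-adjointness of $\frac{\D^{2-\beta}}{m(\Lambda)}$, and $\nabla^\perp\!\cdot\nabla\varphi=0$ — with $g$ for $\psi^\epsilon$ and $\varphi(\cdot,t)$ for $\phi$, I obtain at each fixed $t$ that $\int_{\R^2}\mathcal{H}_0\theta^{\epsilon_k}\,\mathcal{H}_0u^{\epsilon_k}\cdot\nabla\varphi\,\dd x$ equals both $-\int_{\R^2}g\,\bigl(\frac{\D^{2-\beta}\nabla^\perp}{m(\Lambda)}g\bigr)\cdot\nabla\varphi\,\dd x$ and $\int_{\R^2}g\,\frac{\D^{2-\beta}\nabla^\perp\cdot}{m(\Lambda)}(g\nabla\varphi)\,\dd x$; combining the two as in Lemma \ref{lem:exp0} and integrating in $t$ then yields the commutator representation $\int_0^T\!\!\int_{\R^2}\mathcal{H}_0\theta^{\epsilon_k}\,\mathcal{H}_0u^{\epsilon_k}\cdot\nabla\varphi\,\dd x\dd t=\frac12\int_0^T\!\!\int_{\R^2}g\,\bigl(\bigl[\frac{\D^{2-\beta}\nabla^\perp\cdot}{m(\Lambda)},\nabla\varphi\bigr]g\bigr)\,\dd x\dd t$.

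Next I would insert the cut-offs. Since $\eta\equiv\rho\equiv\chi\equiv1$ on $B_R\supset\mathrm{supp}\,\varphi$ and $(1-\chi)\nabla\varphi\equiv0$, decomposing the inner copy of $g$ as $g=g\chi+g(1-\chi)$ inside the commutator makes the $(1-\chi)$-part lose its $\nabla\varphi$-factor in the term $\frac{\D^{2-\beta}\nabla^\perp\cdot}{m(\Lambda)}(g\nabla\varphi)$ and collapse to $-\bigl(\frac{\D^{2-\beta}\nabla^\perp}{m(\Lambda)}(g(1-\chi))\bigr)\cdot\nabla\varphi$, while the $g\chi$-part retains the full commutator structure. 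Decomposing next the outer copy of $g$ as $g=g\rho+g(1-\rho)$ (and rewriting the appropriate outer factor as $g\eta$, using $\eta,\rho\equiv1$ near $\mathrm{supp}\,\nabla\varphi$), I would move the far-field pieces carrying $1-\rho$ by the duality $\int_{\R^2}h\,\frac{\D^{2-\beta}\nabla^\perp\cdot}{m(\Lambda)}v\,\dd x=-\int_{\R^2}\bigl(\frac{\D^{2-\beta}\nabla^\perp}{m(\Lambda)}h\bigr)\cdot v\,\dd x$ into the shape of $\mathcal{I}_7$, and the $(1-\chi)$-piece into the shape of $\mathcal{I}_6$; collecting the terms reproduces \eqref{eq:Exp2}.

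The main obstacle here is not any single estimate but the bookkeeping: one must carefully track which cut-off multiplies which copy of $g$ and keep in mind that $\frac{\D^{2-\beta}\nabla^\perp\cdot}{m(\Lambda)}(g\nabla\varphi)$ fails to be compactly supported even though the factor $\nabla\varphi$ does localise the outermost integration to $B_R$. The particular grouping into $\mathcal{I}_5+\mathcal{I}_6+\mathcal{I}_7$ is dictated by the limiting argument to follow — $\mathcal{I}_5$ is a commutator controllable through \eqref{eq:comm2} together with the strong $L^2_{\mathrm{loc}}$-convergence \eqref{psiConv2} of $\mathcal{H}_0\psi^{\epsilon_k}$, whereas in $\mathcal{I}_6,\mathcal{I}_7$ the operator $\frac{\D^{2-\beta}\nabla^\perp}{m(\Lambda)}$ acts on functions supported at positive distance from $B_R$, so that the kernel bounds of Lemma \ref{lem:kernel} make it a smoothing operator there and the limit can be taken directly — but for the present identity only the algebraic manipulation above is needed.
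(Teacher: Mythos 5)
Your argument is correct and is essentially the paper's own proof: the paper inserts the cut-offs into each of the two representations \eqref{eq:vt-es1} and \eqref{eq:vt-es2} separately and then sums, whereas you sum first to obtain the clean commutator identity of Lemma \ref{lem:exp0} and localize afterwards, but the algebra is identical. One small caveat: carried out carefully, both routes produce coefficients $\tfrac{1}{2}$ in front of $\mathcal{I}_6$ and $\mathcal{I}_7$ (each far-field term arises from only one of the two representations, while the left-hand side is counted twice), so the identity as displayed in \eqref{eq:Exp2} is off by these harmless constants — a discrepancy already present in the paper's own derivation and irrelevant for the subsequent passage to the limit.
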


\begin{proof}[{\bf Proof of Lemma \ref{lem:exp2}}]
From \eqref{eq:vt-es1}, we see that
\begin{align*}
  \int_0^T\int_{\R^2} \mathcal{H}_0\theta^{\epsilon_k}\, \mathcal{H}_0 u^{\epsilon_k}\cdot\nabla \varphi\,\dd x \dd t
  = & - \int_0^T \int_{\R^2} (\mathcal{H}_0\psi^{\epsilon_k}) \rho \left(\frac{\D^{2-\beta}\nabla^\perp}{m(\Lambda)} \mathcal{H}_0\psi^{\epsilon_k}\right) \cdot \nabla \varphi\, \dd x\dd t \\
  = & - \int_0^T \int_{\R^2} (\mathcal{H}_0\psi^{\epsilon_k}) \rho \left(\frac{\D^{2-\beta}\nabla^\perp}{m(\Lambda)} (\mathcal{H}_0\psi^{\epsilon_k}) \chi\right) \cdot \nabla \varphi \dd x\dd t\\
  &\,-  \int_0^T \int_{\R^2} (\mathcal{H}_0\psi^{\epsilon_k}) \rho \left(\frac{\D^{2-\beta}\nabla^\perp}{m(\Lambda)}(\mathcal{H}_0\psi^{\epsilon_k})(1-\chi)\right) \cdot \nabla \varphi \dd x\dd t.
\end{align*}
On the other hand, from \eqref{eq:vt-es2}, we also get
\begin{equation*}
\begin{split}
  \int_0^T\int_{\R^2} \mathcal{H}_0\theta^{\epsilon_k} \, \mathcal{H}_0 u^{\epsilon_k} \cdot\nabla \varphi\,\dd x \dd t
  = & \int_0^T \int_{\R^2} \mathcal{H}_0\psi^{\epsilon_k}  \left(\frac{\D^{2-\beta}\nabla^\perp\cdot}{m(\Lambda)}\big(\mathcal{H}_0\psi^{\epsilon_k} \nabla \varphi\,\big)\right) \dd x\dd t \\
  = & \int_0^T \int_{\R^2} (\mathcal{H}_0\psi^{\epsilon_k}) \rho \left( \frac{\D^{2-\beta}\nabla^\perp\cdot}{m(\Lambda)}\big( (\mathcal{H}_0\psi^{\epsilon_k})\chi \,\nabla \varphi\big)\right) \dd x\dd t \\
  & - \int_0^T \int_{\R^2} \left(\frac{\D^{2-\beta}\nabla^\perp}{m(\Lambda)} (\mathcal{H}_0\psi^{\epsilon_k}) (1-\rho)\right) \cdot\big((\mathcal{H}_0\psi^{\epsilon_k}) \nabla \varphi\big) \dd x\dd t .
\end{split}
\end{equation*}
Summing the above two equalities yields the desired formula \eqref{eq:Exp2}.
\end{proof}

\noindent Thus the left-hand side of \eqref{Targ-conv} can be decomposed as
\begin{equation}\label{eq:decom0}
  \int_0^T \int_{\R^2} (\theta^{\epsilon_k} u^{\epsilon_k})\cdot\nabla \varphi \dd x\dd t = \sum_{i=1,2} \mathcal{I}_i(\theta^{\epsilon_k})
  + \sum_{i=3,4}\mathcal{I}_i(\theta^{\epsilon_k},\psi^{\epsilon_k}) + \sum_{i=5,6,7} \mathcal{I}_i(\psi^{\epsilon_k}).
\end{equation}
Similarly the right-hand side of \eqref{Targ-conv} has the following decomposition
\begin{equation}\label{eq:decom1}
  \int_0^T \int_{\R^2} \theta u \cdot\nabla \varphi \dd x\dd t = \sum_{i=1,2} \mathcal{I}_i(\theta)
  + \sum_{i=3,4}\mathcal{I}_i(\theta,\psi) + \sum_{i=5,6,7} \mathcal{I}_i(\psi).
\end{equation}
We first prove that $\mathcal{I}_1(\theta^{\epsilon_k})\rightarrow \mathcal{I}_1(\theta)$.
We observe that
\begin{equation}\label{I1decom}
\begin{split}
  \mathcal{I}_1(\theta^{\epsilon_k})- \mathcal{I}_1(\theta)= & \int_0^\infty \int_{\R^2}\theta^{\epsilon_k} \eta \left(\nabla^\perp \D^{\beta-2}m(\Lambda) \big((1-\chi)(S_0\theta^{\epsilon_k}-S_0\theta)\big) \right)\cdot\nabla \varphi(x,t)\, \dd x\dd t\\
  &\, + \int_0^\infty \int_{\R^2}(\theta^{\epsilon_k}-\theta) \eta \left(\nabla^\perp \D^{\beta-2}m(\Lambda) \big((1-\chi)S_0\theta\big) \right)\cdot\nabla \varphi(x,t)\, \dd x\dd t \\
  \equiv\, & I^{\epsilon_k}_{1,1} + I^{\epsilon_k}_{1,2}.
\end{split}
\end{equation}
Let us set $h_j^{\epsilon_k}\equiv \eta \left(\partial_j \D^{\beta-2}m(\Lambda)\big((1-\chi) S_0\theta^{\epsilon_k}\big) \right)$ for $j=1,2$, then we claim that as $k\rightarrow\infty$, we have the following convergence (up to subsequence that is still denoted $h_j^{\epsilon_k}$)
\begin{equation}\label{eq:claim}
  \textrm{$h_j^{\epsilon_k}$ strongly converges to $h_j\equiv \eta \big(\partial_j \D^{\beta-2}m(\Lambda)\big((1-\chi)S_0\theta\big) \big)$ in $L^\infty(0,T; L^2(B_R))$},
\end{equation}
Indeed, one first notices that $K_{\beta,j}$ ($j=1,2$) (which appeared in Lemma \ref{lem:kernel}) is the kernel function of $\partial_j \D^{\beta-2}m(\Lambda)$, then, thanks to \eqref{kernel1}-\eqref{kernel2} along with the support property, one finds that for all $x\in B_{2R}$,
\begin{align*}
  \left|\partial_j \D^{\beta-2} m(\Lambda)\big((1-\chi)S_0\theta^{\epsilon_k}\big)(x)\right| & = \left| \int_{\R^2} K_{\beta,j}(x-y) \big((1-\chi)S_0\theta^{\epsilon_k}\big)(y)\,\dd y\right| \\
  & \leq C \int_{|y|\geq 8R} \frac{m(|x-y|^{-1})}{|x-y|^{1+\beta}} |S_0\theta^{\epsilon_k}(y)| \,\dd y \\
  & \leq C m(R^{-1}) \int_{|y|\geq 8R} \frac{1}{|y|^{1+\beta}} |S_0\theta^{\epsilon_k}(y)| \, \dd y \\
  & \leq C \frac{m(R^{-1})}{R^\beta} \|S_0\theta^{\epsilon_k}\|_{L^2(\R^2)} \leq C \|\theta_0\|_{L^2(\R^2)},
\end{align*}
and
\begin{equation*}
\begin{split}
  \left|\nabla\partial_j \D^{\beta-2} m(\Lambda)\big((1-\chi)S_0\theta^{\epsilon_k}\big)(x)\right| & = \left| \int_{\R^2} \nabla K_{\beta,j}(x-y) \big((1-\chi)S_0\theta^{\epsilon_k}\big)(y)\,\dd y\right| \\
  & \leq C \int_{|y|\geq 8R} \frac{m(|x-y|^{-1})}{|x-y|^{2+\beta}} |S_0\theta^{\epsilon_k}(y)| \,\dd y \leq C \|\theta_0\|_{L^2(\R^2)},
\end{split}
\end{equation*}
thus, we obtain that
\begin{equation}\label{hj-es1}
\begin{split}
  \|h_j^{\epsilon_k}\|_{L^\infty(0,T;H^1(\R^2))}
  &\leq C_0\|\partial_j \D^{\beta-2} m(\Lambda)\big((1-\chi)S_0\theta^{\epsilon_k}\big)\|_{L^\infty(0,T;L^2(B_{2R}))}\\
  & \  \ +  C_0\|\nabla\partial_j \D^{\beta-2} m(\Lambda)\big((1-\chi)S_0\theta^{\epsilon_k}\big)\|_{L^\infty(0,T;L^2(B_{2R}))} \\
  &\leq C \|\theta_0\|_{L^2(\R^2)}.
\end{split}
\end{equation}
Then, in order to show the convergence of $h_j^{\epsilon_k}$, we need to prove some uniform bound on $\partial_t h_j^{\epsilon_k}$.
Let $\phi\in\mathcal{D}(\R^2)$ be a test function, thus
\begin{align}\label{hj-es2}
  \left|\int_{\R^2} \partial_t h_j^{\epsilon_k}(x,t)\,\phi(x) \dd x\right| &
  = \left|\int_{\R^2}\eta\left(\partial_j \D^{\beta-2}m(\Lambda) \big((1-\chi)S_0\partial_t\theta^{\epsilon_k}\big) \right)(x,t)\,\phi(x)\dd x  \right| \nonumber\\
  & = \left|\int_{\R^2} \partial_t\theta^{\epsilon_k}(x,t) \, S_0\big((1-\chi)\partial_j \D^{\beta-2}m(\Lambda)(\eta\phi)(x)\big)\, \dd x  \right| \\
  & \leq \|\partial_t \theta^{\epsilon_k}\|_{L^\infty(0,T; H^{-5}(\R^2))} \|S_0\big((1-\chi)\partial_j \D^{\beta-2}m(\Lambda)(\eta\phi)\big)\|_{H^5(\R^2)} \nonumber\\
  & \leq C \|\phi\|_{L^2(\R^2)}, \nonumber
\end{align}
where in the last line we have used the following estimate (using \eqref{kernel1}-\eqref{kernel2})
\begin{align*}
  \|S_0\big((1-\chi)\partial_j \D^{\beta-2}m(\Lambda)(\eta\phi)\big)\|_{H^5(\R^2)}
  \leq\,& C \left\|\int_{B_{2R}} |K_{\beta,j}(x-y)| \,|\eta \phi|(y)\dd y\right\|_{L^2_x(B_{8R}^c)} \\
  \leq \,& C \left\|\frac{m(|x|^{-1})}{|x|^{1+\beta}}\right\|_{L^2_x(B_{8R}^c)} \int_{B_{2R}} |\eta \phi|\dd y\leq C \|\phi\|_{L^2(\R^2)}.
\end{align*}
Hence, using  \eqref{hj-es1} and \eqref{hj-es2}, we observe that the strong convergence \eqref{eq:claim} follows from Ascoli's theorem together with the weak convergence of $\theta^{\epsilon_k}$.
Now for $I^{\epsilon_k}_{1,1}$ in \eqref{I1decom}, using H\"older's inequality and \eqref{eq:claim}, we get
\begin{equation}\label{eq:Iconv}
\begin{split}
  \lim_{k\rightarrow \infty}|I^{\epsilon_k}_{1,1}| & \leq C\lim_{k\rightarrow\infty} \|\theta^{\epsilon_k}\|_{L^2([0,T],L^2)} \|h_j^{\epsilon_k}-h_j\|_{L^2([0,T],L^2(B_R))} \\
  & \leq C T \|\theta_0\|_{L^2(\R^2)} \lim_{k\rightarrow \infty}\|h_j^{\epsilon_k}-h_j\|_{L^\infty([0,T],L^2(B_R))} =0.
\end{split}
\end{equation}
As for $I^{\epsilon_k}_{1,2}$, by \eqref{hj-es1}, we know that
 $$\left\|\eta \left(\nabla^\perp \D^{\beta-2}m(\Lambda) \big((1-\chi)S_0\theta\big) \right)\right\|_{L^2(0,T;L^2(\R^2))}\leq C,$$
hence, the weak convergence of $\theta^{\epsilon_k}$ in $L^2([0,T]\times \R^2)$ implies that $\lim_{k\rightarrow \infty} |I^{\epsilon_k}_{1,2}|=0$. By \eqref{I1decom} and these two convergence results, we obtain
\begin{equation}\label{eq:I1conv}
  \lim_{k\rightarrow \infty} \mathcal{I}_1(\theta^{\epsilon_k}) = \mathcal{I}_1(\theta).
\end{equation}

We then focus on the convergence of $\mathcal{I}_2(\theta^{\epsilon_k})$ defined in \eqref{eq:decom3}. We have
\begin{equation}\label{eq:I2conv}
\begin{split}
  |\mathcal{I}_2(\theta^{\epsilon_k}) - \mathcal{I}_2(\theta)| \leq \, & \left|\int_0^T \int_{\R^2} \chi\theta^{\epsilon_k} \left(\nabla^\perp \D^{\beta-2}m(\Lambda)
  \big( \chi S_0(\theta^{\epsilon_k}-\theta)\big) \right)\cdot\nabla \varphi(x,t) \,\dd x\dd t\right| \\
  \, & + \left|\int_0^T  \int_{\R^2} \chi(\theta^{\epsilon_k}-\theta) \left(\nabla^\perp \D^{\beta-2}m(\Lambda)
  \big( \chi (S_0\theta) \big) \right)\cdot\nabla \varphi(x,t) \,\dd x\dd t \right|.
\end{split}
\end{equation}
Following what we did in \eqref{eq:claim}, we claim that, as $k\rightarrow\infty$, we have the following convergence (up to a subsequence)
\begin{equation}\label{eq:claim2}
  \textrm{$\nabla^\perp \D^{\beta-2}m(\Lambda) \big(\chi (S_0\theta^{\epsilon_k})\big)\rightarrow\nabla^\perp \D^{\beta-2}m(\Lambda) \big(\chi( S_0\theta)\big)$ in $L^\infty([0,T]; L^2(B_R))$}.
\end{equation}
Observe that,  via \eqref{eq:claim2} and following the same arguments as what we previously did for $I^{\epsilon_k}_{1,1}$, $I^{\epsilon_k}_{1,2}$, we find
that the right-hand-sid of \eqref{eq:I2conv} converges to 0 as $k\rightarrow \infty$, this leads to
\begin{equation}
  \lim_{k\rightarrow \infty} \mathcal{I}_2(\theta^{\epsilon_k}) = \mathcal{I}_2(\theta).
\end{equation}
To estimate \eqref{eq:claim2}, we use Plancherel's theorem together with the assumption (A3) and \eqref{m-fac2}, we finally get by using Bernstein's inequality  along with \eqref{L1L2bdd}
\begin{align}\label{eq:inegalit1}
  & \left\|\nabla^\perp \D^{\beta-2}m(\Lambda) (\chi (S_0\theta^{\epsilon_k}))\right\|_{L^\infty([0,T]; H^1(\R^2))}\nonumber \\
  \leq & \,C_0 \|\D^{\beta-1}m(\Lambda) S_0(\chi(S_0 \theta^{\epsilon_k}))\|_{L^\infty([0,T]; L^2(\R^2))}
  + C_0 \|\D^{\beta-1} m(\Lambda) \mathcal{H}_0 (\chi S_0\theta^{\epsilon_k})\|_{L^\infty(0,T: H^1(\R^2))} \nonumber\\
  \leq&\nonumber \, C_0 \|\D^{\lambda+\beta-1} S_0(\chi(S_0 \theta^{\epsilon_k}))\|_{L^\infty(0,T; L^2(\R^2))}
  + C_0 \|\mathcal{H}_0 (\chi S_0\theta^{\epsilon_k})\|_{L^\infty(0,T: H^3(\R^2))} \\
  \leq & \nonumber
  \begin{cases}
  C_0 \|\chi (S_0\theta^{\epsilon_k})\|_{L^\infty(0,T;L^2)} + C_0\|\chi\|_{H^3} \|S_0 \theta^{\epsilon_k}\|_{L^\infty(0,T;H^3)},\quad & \textrm{for  }\lambda\in [1-\beta,1),\beta\in(0,1], \\
  C_0 \|\chi (S_0\theta^{\epsilon_k})\|_{L^\infty(0,T;L^{\frac{2}{2-\lambda-\beta}})} + C_0\|\chi\|_{H^3} \|S_0 \theta^{\epsilon_k}\|_{L^\infty(0,T;H^3)},\quad & \textrm{for  }\lambda\in [0,1-\beta],\beta\in(0,1],
  \end{cases}
  \\
  \leq & \|\chi\|_{H^3(\R^2)} \|\theta^{\epsilon_k}\|_{L^\infty(0,T; L^2(\R^2))}
  \leq C \|\theta_0\|_{L^2(\R^2)}.
\end{align}
Then, since  $\partial_t \theta^{\epsilon_k}\in L^\infty(0,T; H^{-5}(\R^2))$ uniformly in $\epsilon_k$, and for all $\phi\in\mathcal{D}(\R^2)$,
\begin{align*}
  & \left|\int_{\R^2} \nabla^\perp \D^{\beta-2}m(\Lambda)\big (\chi (S_0\partial_t\theta^{\epsilon_k})\big) \phi \,\dd x\right|
   \,= \left|\int_{\R^2} \partial_t \theta^{\epsilon_k}S_0 \left(\chi \big(\nabla^\perp \D^{\beta-2}m(\Lambda)\phi\big)\right)\dd x \right|,
\end{align*}
thus, we may estimate the right hand side of the previous inequality as follows,
\begin{align}\label{eq:inegalit2}
  & \left|\int_{\R^2} \partial_t \theta^{\epsilon_k}S_0 \left(\chi \big(\nabla^\perp \D^{\beta-2}m(\Lambda)\phi\big)\right)\dd x \right| \nonumber \\
  \leq\, & \|\partial_t\theta^{\epsilon_k}\|_{L^\infty(0,T; H^{-5}(\R^2))} \|S_0\big(\chi  \big(\nabla^\perp \D^{\beta-2}m(\Lambda)\phi\big)\big) \|_{H^5(\R^2)} \nonumber \\
  \nonumber \leq\, & C \|S_0(\chi (\nabla^\perp \D^{\beta-2}m(\Lambda) S_0 \phi))\|_{L^2(\R^2)} + C \|S_0 (\chi (\nabla^\perp \D^{\beta-2}m(\Lambda)\mathcal{H}_0 \phi))\|_{L^2(\R^2)} \\
  \nonumber\leq \, &
  \begin{cases}
  C \| \D^{\lambda+\beta-1} S_0 \phi\|_{L^2(\R^2)} + C \|\D^{\alpha+\beta-1}\mathcal{H}_0 \phi\|_{L^2(\R^2)},\quad & \textrm{for  }  \lambda\in [1-\beta,1), \beta\in(0,1], \\
  C \| \D^{\lambda+\beta-1}  S_0 \phi\|_{L^{\frac{2}{\lambda+\beta}}(\R^2)} + C \|\D^{\alpha+\beta-1}\mathcal{H}_0 \phi\|_{L^2(\R^2)},\quad & \textrm{for  }  \lambda\in [0,1-\beta],\beta\in(0,1],
  \end{cases}
  \\
  \leq\, & C \|S_0 \phi\|_{L^2(\R^2)} + C \|\phi\|_{H^1(\R^2)} \leq C \|\phi\|_{H^1(\R^2)}.
\end{align}
Hence, these two estimates \eqref{eq:inegalit1}-\eqref{eq:inegalit2} and the use of Ascoli's theorem along with the weak convergence of $\theta^{\epsilon_k}$ allow us to conclude the proof of the claim \eqref{eq:claim2}. \\

\noindent Now we turn to the estimation of $\mathcal{I}_3(\theta^{\epsilon_k},\psi^{\epsilon_k})$ and $\mathcal{I}_4(\theta^{\epsilon_k},\psi^{\epsilon_k})$ in \eqref{eq:decom4}. Noticing that for $j=1,2$, we have
\begin{align*}
  \partial_j S_0 f(x)= \big(\partial_j \mathcal{F}^{-1}(\widetilde{\chi})\big)*f(x),\quad \textrm{with  $\mathcal{F}^{-1}(\widetilde{\chi})\in \mathcal{S}(\R^2)$,}
\end{align*}
therefore by using \eqref{eq:psiEs3}, \eqref{psiConv2} and following the same steps as we did for the passage to the limit of $\mathcal{I}_1(\theta^{\epsilon_k})$ in \eqref{eq:I1conv}, one finds that
$\mathcal{I}_3(\theta^{\epsilon_k},\psi^{\epsilon_k})\rightarrow \mathcal{I}_3(\theta,\psi)$ as  $k\rightarrow \infty$, that is,
\begin{align}\label{eq:I3conv}
  \iint (\mathcal{H}_0\psi^{\epsilon_k})
  \,\eta \,\nabla^\perp S_0 \big(\theta^{\epsilon_k} (1-\chi)\big) \cdot \nabla\varphi\,\dd x \dd t\rightarrow \iint (\mathcal{H}_0\psi)
  \,\eta \,\nabla^\perp S_0 \big(\theta (1-\chi)\big) \cdot \nabla\varphi\,\dd x \dd t.
\end{align}
Moreover, following the same ideas as the control of $\mathcal{I}_2(\theta^{\epsilon_k})$  (see \eqref{eq:I2conv}), one analogously finds that $\mathcal{I}_4(\theta^{\epsilon_k},\psi^{\epsilon_k})\rightarrow \mathcal{I}_4(\theta,\psi)$ as  $k\rightarrow \infty$,
that is,
\begin{align}\label{eq:I4conv}
  \int_0^T\int_{\R^2} \eta(\mathcal{H}_0\psi^{\epsilon_k}) \,\big(\nabla^\perp S_0 ( \theta^{\epsilon_k}\chi)\big) \cdot \nabla\varphi\,\dd x \dd t\rightarrow\int_0^T\int_{\R^2} \eta(\mathcal{H}_0\psi) \,\big(\nabla^\perp S_0 ( \theta\chi)\big) \cdot \nabla\varphi\,\dd x \dd t.
\end{align}

Next we consider the convergence of $\mathcal{I}_i(\psi^{\epsilon_k})$, $i=5,6,7$ in \eqref{eq:Exp2}. We may write the difference as
\begin{align}\label{eq:decom5}
  \sum_{i=5,6,7}\big(\mathcal{I}_i(\psi^{\epsilon_k})- \mathcal{I}_i(\psi)\big)
  =& \frac{1}{2} \int_0^T \int_{\R^2} (\mathcal{H}_0\psi^{\epsilon_k} -\mathcal{H}_0\psi)\rho\, \left[\frac{\D^{2-\beta}\nabla^\perp \cdot}{m(\Lambda)},\nabla\varphi \right]\big((\mathcal{H}_0\psi^{\epsilon_k})\chi\big)\, \dd x \dd t \nonumber \\
  & + \frac{1}{2} \int_0^T \int_{\R^2} (\mathcal{H}_0\psi) \rho\, \left[\frac{\D^{2-\beta}\nabla^\perp \cdot}{m(\Lambda)},\nabla\varphi \right]
  \big((\mathcal{H}_0\psi^{\epsilon_k}-\mathcal{H}_0\psi)\chi\big)\, \dd x \dd t\nonumber \\
  & -  \int_0^T \int_{\R^2} (\mathcal{H}_0\psi^{\epsilon_k}-\mathcal{H}_0\psi) \rho \left(\frac{\D^{2-\beta}\nabla^\perp}{m(\Lambda)}
  \big((\mathcal{H}_0\psi^{\epsilon_k})(1-\chi)\big)\right) \cdot \nabla \varphi\, \dd x\dd t \nonumber\\
  & -  \int_0^T \int_{\R^2} (\mathcal{H}_0\psi) \rho \left(\frac{\D^{2-\beta}\nabla^\perp}{m(\Lambda)}
  \big((\mathcal{H}_0\psi^{\epsilon_k}-\mathcal{H}_0\psi)(1-\chi)\big)\right) \cdot \nabla \varphi \,\dd x\dd t \nonumber \\
  & -  \int_0^T \int_{\R^2} (\mathcal{H}_0\psi^{\epsilon_k}-\mathcal{H}_0\psi) \eta \left(\frac{\D^{2-\beta}\nabla^\perp}{m(\Lambda)}
  \big((\mathcal{H}_0\psi^{\epsilon_k})(1-\rho)\big)\right) \cdot \nabla \varphi\, \dd x\dd t \nonumber \\
  & -  \int_0^T \int_{\R^2} (\mathcal{H}_0\psi) \eta \left(\frac{\D^{2-\beta}\nabla^\perp}{m(\Lambda)}
  \big((\mathcal{H}_0\psi^{\epsilon_k}-\mathcal{H}_0\psi)(1-\rho)\big)\right) \cdot \nabla \varphi \,\dd x\dd t \nonumber\\
  \equiv\,& J_1^{\epsilon_k} + J_2^{\epsilon_k} + J_3^{\epsilon_k}+ J_4^{\epsilon_k}+J_5^{\epsilon_k}+J_6^{\epsilon_k} .
\end{align}
For the term $J_1^{\epsilon_k}$, by  \eqref{eq:comm}, \eqref{eq:prodEs}, \eqref{eq:psiEs2} and $\theta^{\epsilon_k}= \frac{\D^{2-\beta}}{m(\Lambda)} \psi^{\epsilon_k}$, we have
\begin{equation*}
\begin{split}
  |J_1^{\epsilon_k}| & \leq \frac{1}{2} \|(\mathcal{H}_0\psi^{\epsilon_k} -\mathcal{H}_0\psi)\rho\|_{L^2([0,T], L^2(\R^2))}
  \left\|\left[\frac{\D^{2-\beta}\nabla^\perp \cdot}{m(\Lambda)},\nabla\varphi \right]\big((\mathcal{H}_0\psi^{\epsilon_k})\chi\big)\right\|_{L^2([0,T], L^2(\R^2))} \\
  & \leq C  \|(\mathcal{H}_0\psi^{\epsilon_k} -\mathcal{H}_0\psi)\rho\|_{L^2([0,T], L^2)} \left( \left\|(\mathcal{H}_0\psi^{\epsilon_k})\chi\right\|_{L^2([0,T],L^2)}
  + \Big\|\frac{\D^{2-\beta}}{m(\Lambda)}\big((\mathcal{H}_0\psi^{\epsilon_k}) \chi\big)\Big\|_{L^2([0,T],L^2)}\right) \\
  & \leq C  \|(\mathcal{H}_0\psi^{\epsilon_k} -\mathcal{H}_0\psi)\rho\|_{L^2(0,T; L^2)} \left( \left\|\mathcal{H}_0\psi^{\epsilon_k}\right\|_{L^2([0,T],L^2)}
  + \left\|\mathcal{H}_0\theta^{\epsilon_k} \right\|_{L^2([0,T],L^2)}\right) (1+ \|\chi\|_{H^3}) \\
  & \leq C  T \|(\mathcal{H}_0\psi^{\epsilon_k} -\mathcal{H}_0\psi)\rho\|_{L^\infty([0,T], L^2)} ,
\end{split}
\end{equation*}
thus the local convergence \eqref{psiConv2} implies that
\begin{equation}\label{J1conv}
  \lim_{k\rightarrow \infty} |J_1^{\epsilon_k}| \leq C T \lim_{k\rightarrow \infty}\|(\mathcal{H}_0\psi^{\epsilon_k} -\mathcal{H}_0\psi)\rho\|_{L^\infty([0,T], L^2)}=0 .
\end{equation}
For the term $J_2^{\epsilon_k}$, recalling that $\mathcal{M}_{2-\beta}(\Lambda)$ is the multiplier operator given by \eqref{Ms}, and by using \eqref{eq:comm2} and \eqref{eq:prodEs2}, we obtain
\begin{align*}
   |J_2^{\epsilon_k}| & = \frac{1}{2} \left|\int_0^T \int_{\R^2} \frac{1}{\mathcal{M}_{2-\beta}(\Lambda)}\big((\mathcal{H}_0\psi) \rho\big)\,\, \left(\mathcal{M}_{2-\beta}(\Lambda)\left[\frac{\D^{2-\beta}\nabla^\perp \cdot}{m(\Lambda)},\nabla\varphi \right]
   \big((\mathcal{H}_0\psi^{\epsilon_k}-\mathcal{H}_0\psi)\chi\big)\right)\, \dd x \dd t\right| \\
  & \leq \frac{1}{2} \left\|\frac{1}{\mathcal{M}_{2-\beta}(\Lambda)}\big((\mathcal{H}_0\psi) \rho\big)\right\|_{L^2([0,T], L^2)}
  \left\| \mathcal{M}_{2-\beta}(\Lambda)\left[\frac{\Lambda^{2-\beta}\nabla^\perp \cdot}{m(\Lambda)},\nabla\varphi \right]\big((\mathcal{H}_0\psi^{\epsilon_k}-\mathcal{H}_0\psi)\chi\big)\right\|_{L^2(0,T;L^2)} \\
  & \leq C \left(\|\mathcal{H}_0\psi\|_{L^2([0,T],L^2)} + \|\mathcal{H}_0\theta\|_{L^2([0,T],L^2)}\right)\|\rho\|_{H^3} \|\varphi\|_{L^\infty([0,T],H^5)}  \|(\mathcal{H}_0\psi^{\epsilon_k}-\mathcal{H}_0\psi)\chi\|_{L^2([0,T], L^2)} \\
  & \leq C T \|(\mathcal{H}_0\psi^{\epsilon_k}- \mathcal{H}_0 \psi)\chi\|_{L^\infty([0,T], L^2(\R^2))},
\end{align*}
then \eqref{psiConv2} implies that
\begin{equation}
  \lim_{k\rightarrow \infty} |J_2^{\epsilon_k}| \leq C T \lim_{k\rightarrow \infty} \|(\mathcal{H}_0\psi^{\epsilon_k}-\mathcal{H}_0\psi)\chi\|_{L^\infty([0,T],L^2(\R^2))} = 0 .
\end{equation}

\noindent For the term $J_3^{\epsilon_k}$, we use \eqref{kernel3} and \eqref{eq:psiEs2} to infer that for all $x\in B_R$,
\begin{align}\label{eq:Est1}
  \left|\frac{\D^{2-\beta} \partial_j}{m(\Lambda)}\big((\mathcal{H}_0\psi^{\epsilon_k})(1-\chi)\big)(x)\right|
  &= \left|\int_{\R^2} \widetilde{K}_{\beta,j}(x-y) \mathcal{H}_0\psi^{\epsilon_k}(y)\big(1-\chi(y)\big) \dd y \right| \nonumber\\
  & \leq C\int_{|y|\geq 8R} \frac{1}{|x-y|^{5-\beta}m(|x-y|^{-1})} |\mathcal{H}_0\psi^{\epsilon_k}(y)| \dd y \nonumber \\
  & \leq C \int_{|y|\geq 8R} \frac{1}{|y|^{5-\beta} m(|y|^{-1})} |\mathcal{H}_0\psi^{\epsilon_k}(y)| \dd y  \nonumber \\
  & \leq C \left(\int_{|y|\geq 8R} \frac{1}{\big(|y|^{5-\beta}m(|y|^{-1})\big)^2} \dd y\right)^{1/2}  \|\mathcal{H}_0\psi^{\epsilon_k}\|_{L^2(\R^2)} \nonumber \\
  &\leq C \|\theta_0\|_{L^2(\R^2)},
\end{align}
thus, thanks to H\"older's inequality and \eqref{psiConv2} we get
\begin{equation}
\begin{split}
  \lim_{k\rightarrow \infty}|J_3^{\epsilon_k}| & \leq \lim_{k\rightarrow\infty}\|(\mathcal{H}_0\psi^{\epsilon_k}-\mathcal{H}_0\psi) \rho\|_{L^2([0,T], L^2)} \left\|\frac{\D^{2-\beta} \nabla^\perp}{m(\Lambda)}\big((\mathcal{H}_0\psi^{\epsilon_k})(1-\chi)\big)\cdot\nabla \varphi\right\|_{L^2([0,T], L^2)} \\
  & \leq C \lim_{k\rightarrow \infty}\|(\mathcal{H}_0\psi^{\epsilon_k}-\mathcal{H}_0\psi) \rho\|_{L^2([0,T], L^2(\R^2))} =0.
\end{split}
\end{equation}

\noindent Then, we consider the term $J_4^{\epsilon_k}$, and we first prove that the sequence $g^{\epsilon_k}_j\equiv\rho \left(\frac{\D^{2-\beta}\partial_j}{m(\Lambda)}\big((\mathcal{H}_0\psi^{\epsilon_k})(1-\chi)\big)\right)$
 for $j=1,2$, is locally convergent.
To obtain this convergence, we follow the same argument as in \eqref{eq:Est1} and by using \eqref{kernel3}-\eqref{kernel4}, we get
\begin{equation*}
\begin{split}
 \|g^{\epsilon_k}_j\|_{L^\infty([0,T]; H^1(\R^2))}\leq & \, C \left\|\frac{\D^{2-\beta}\partial_j}{m(\Lambda)}\big( (\mathcal{H}_0\psi^{\epsilon_k}) (1-\chi)\big)\right\|_{L^\infty([0,T], L^2(B_{4R}))}
 \\ & \ + \ C \left\|\frac{\nabla \D^{2-\beta}\partial_j}{m(\Lambda)}\big( (\mathcal{H}_0\psi^{\epsilon_k}) (1-\chi)\big)\right\|_{L^\infty([0,T], L^2(B_{4R}))} \\
\leq & \, C.
\end{split}
\end{equation*}
Besides, for any test function $\phi \in \mathcal{D}(\R^2)$, we see that, thanks to \eqref{kernel3}, we may write that for any $l=0,1,\cdots,5$,
\begin{align*}
  \left\| \frac{\Lambda^{2-\beta}\partial_j}{m(\Lambda)}\nabla^l\big(\rho \phi\big)\right\|_{L^2(B_{8R}^c)} = &\, \left\| \int_{\R^2} \widetilde{K}_{\beta,j}(x-y)\nabla^l (\phi \rho) \dd y\right\|_{L^2(B_{8R}^c)} \\
  \leq &\, C \left\| \int_{B_{4R}}  \frac{1}{|x-y|^{5-\beta}m(|x-y|^{-1})} |\nabla^l (\phi \rho)(y)| \dd y\right\|_{L^2(B_{8R}^c)} \\
  \leq &\, C \left\| \frac{1}{|x|^{5-\beta} m(|x|^{-1})} \right\|_{L^2(B_{8R}^c)} \left|\int_{B_{4R}}| \nabla^l (\phi \rho)(y)| \dd y\right| \\
  \leq &\, C \|\phi\|_{H^5(\R^2)},
\end{align*}
and by using \eqref{psi-td2}, we infer that
\begin{align*}
  \left|\int_{\R^2} \partial_t g^{\epsilon_k}_j(x,t)\, \phi(x) \dd x\right| & = \left|\int_{\R^2} \rho \left( \frac{\D^{2-\beta}\partial_j}{m(\Lambda)}\big(\partial_t \psi^{\epsilon_k}(1-\chi)\big)\right) \phi\, \dd x \right|\\
  & = \left| \int_{\R^2} \partial_t \psi^{\epsilon_k} (1-\chi) \left( \frac{\D^{2-\beta}\partial_j}{m(\Lambda)}\big(\rho \phi\big)\right)\,\dd x \right| \\
  & \leq \|\partial_t \psi^{\epsilon_k}(t)\|_{H^{-5}(\R^2)} \left\|(1-\chi) \left(\frac{\D^{2-\beta}\nabla^\perp}{m(\Lambda)}\right)(\rho \phi)\right\|_{H^5(\R^2)}\\
  & \leq C \|\phi\|_{H^5(\R^2)},
\end{align*}
which ensures that, for $j=1,2$,
\begin{equation*}
  \partial_t g^{\epsilon_k}_j \in L^\infty([0,T], H^{-5}(\R^2)),\quad \textrm{uniformly in $\epsilon_k$}.
\end{equation*}
Hence, using to Ascoli's theorem  and the weak convergence of $\mathcal{H}_0\psi^{\epsilon_k}$ to $\mathcal{H}_0\psi$ (which is a consequence of the weak convergence of $\theta^{\epsilon_k}$),
we obtain that the sequence $g^{\epsilon_k}_j$ (up to a subsequence, still denoted $g^{\epsilon_k}_j$) satisfies
\begin{equation*}
  g_j^{\epsilon_k}\rightarrow g_j= \rho \Big(\frac{\D^{2-\beta}\partial_j}{m(\Lambda)}\big((\mathcal{H}_0\psi)(1-\chi)\big)\Big),\quad \textrm{in  $L^\infty([0,T],L^2(B_R))$},
\end{equation*}
and it implies that
\begin{equation}\label{J4conv}
  \lim_{k\rightarrow\infty} |J_4^{\epsilon_k}| \leq C \|\mathcal{H}_0\psi\|_{L^2([0,T],L^2(\R^2))} \lim_{k\rightarrow 0} \sup_{j=1,2}\big\|g_j^{\epsilon_k}-g_j\big\|_{L^2([0,T], L^2(B_R))}=0.
\end{equation}

\noindent The estimates of both $J_5^{\epsilon_k}$ and $J_6^{\epsilon_k}$ are the same as $J_3^{\epsilon_k}$ and $J_4^{\epsilon_k}$, and we have
\begin{equation}\label{J56conv}
  \lim_{k\rightarrow\infty } \left(|{J_5^{\epsilon_k}}|+ |{J_6^{\epsilon_k}}|\right) =0.
\end{equation}

Hence \eqref{eq:I1conv}, \eqref{eq:I2conv}, \eqref{eq:I3conv}, \eqref{eq:I4conv}, \eqref{J1conv}-\eqref{J56conv} and the decompositions \eqref{eq:decom0}-\eqref{eq:decom1}, \eqref{eq:decom5} allow us to get the desired convergence \eqref{Targ-conv}. \\

Therefore, we have proved \eqref{Targ-conv} for all $\alpha\in(0,1)$, $\beta\in(0,1]$, $\lambda\in [0,1)$, and this ends the proof of Theorem \ref{thm:GE}.
\qed

\section{Proof of Theorem \ref{thm:RegLog}}

Let $\epsilon>0$, and $\theta^\epsilon$ be a smooth solution to the following regularized equation 
\begin{equation}\label{ap-gSQG2}
(gSQG)_{\beta,\epsilon} : \begin{cases}
  \partial_t \theta^\epsilon + u^\epsilon\cdot\nabla \theta^\epsilon + \nu \Lambda^\beta\theta^{\epsilon} - \epsilon\Delta \theta= 0, &\quad (t,x)\in \mathbb{R}^+\times \mathbb{R}^2, \\
  u^{\epsilon} = \nabla^\perp \Lambda^{\beta-2} m(\Lambda)\theta^{\epsilon}, \\
  \theta^{\epsilon}(0,x)= \theta^{\epsilon}_0(x)*\phi_\epsilon, & \quad x\in\mathbb{R}^2,
\end{cases}
\end{equation}
with $\theta_0\in L^1\cap L^2(\R^2)$, $\phi_\epsilon(x) =\epsilon^{-2}\phi(\epsilon^{-1}x)$ and $\phi$ a mollifier.
Obviously, the global well-posedness of a solution $\theta^\epsilon$ of equation \eqref{ap-gSQG2} which is such that $\theta^\epsilon\in C([0,\infty);H^s(\R^2))\cap C^\infty([0,\infty)\times \R^2)$ with $s>2$  follows in a straightforward manner from Proposition \ref{prop:glob}.
Indeed, the additional dissipation term $\D^\beta \theta^\epsilon$ has a regularizing effect.
Since $\theta_0\in L^1\cap L^2(\R^2)$, we also have, uniformly in $\epsilon$, the following $L^p$-estimate, for all $T\geq0$
\begin{equation}\label{L1L2Est}
  \|\theta^\epsilon\|_{L^\infty([0,T];L^1\cap L^2(\R^2))}\leq \|\theta_0\|_{L^1\cap L^2(\R^2)},
\end{equation}
and the following energy estimate, uniformly in $\ep$,
\begin{equation}\label{EneEst}
  \|\theta^\epsilon(T)\|_{L^2(\R^2)}^2 + \int_0^T \|\theta^\epsilon(\tau)\|_{\dot H^{\beta/2}(\R^2)}^2\dd \tau \leq \|\theta_0\|_{L^2(\R^2)}^2,\quad \forall\; T\geq 0.
\end{equation}
Thanks to Theorem \ref{thm:GE}, we know that there exists a global weak solution $\theta$ to the $(gSQG)_\beta$ equation \eqref{eq:gSQG}.
Indeed, we have seen that we have enough compactness to pass to the limit as $\epsilon$ goes to 0, hence, {\it{a fortiori}} we can pass to the limit in \eqref{ap-gSQG2},
and both the $L^{1}\cap L^{2}$ maximum principle \eqref{L1L2Est} and the energy inequality \eqref{EneEst} obviously hold by replacing $\theta^\epsilon$ by its weak limit $\theta$. \\

In the sequel we divide the proof of Theorem \ref{thm:RegLog} into 4 subsections. Note that we first prove the point (2) of Theorem \ref{thm:RegLog}, 
that is, the eventual regularity result,
this will be done in the subsections \ref{subsec:EvReg}-\ref{subsec:T*}; and then the proof of point (1) of Theorem \ref{thm:RegLog} in the subsection \ref{subsec:grLog}. Before starting the proofs, we also give some regularity criteria for the weak solution to be smooth.

\subsection{Regularity criteria}\label{subsec:RC}

We first have the following regularity criteria for the global weak solution of the equation \eqref{eq:gSQG}.
\begin{lemma}\label{lem:RC}
  Let $\nu=1$, $\beta\in(0,1]$, $\theta \in L^\infty([0,\infty); L^1\cap L^2)\cap L^2([0,\infty); \dot H^{\beta/2})$ be the global weak solution to the $(gSQG)_\beta$ equation
\eqref{eq:gSQG} with $m$ satisfying (A1)-(A4).
Suppose that, for all times $T_1,T_2$ such that $0\leq T_1 < T_2 <\infty$, we have
\begin{equation}\label{eq:RCcd}
  \|\theta \|_{L^\infty([T_1,T_2]; \dot C^\sigma(\R^2))}<\infty,\quad \textrm{for some  }\sigma >\alpha,
\end{equation}
then we have
\begin{equation*}
  \theta \in C^\infty (\R^2\times(T_1,T_2]).
\end{equation*}
\end{lemma}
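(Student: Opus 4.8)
The strategy is a bootstrap argument based on the smoothing effect of the fractional dissipation $\Lambda^{\beta}$, exploiting that the velocity $u=\nabla^{\perp}\Lambda^{\beta-2}m(\Lambda)\theta$ gains almost $1-\beta-\alpha$ derivatives relative to $\theta$ by Lemma \ref{lem:mBern} and the Mikhlin-Hörmander bound (A2). Fix $0\le T_{1}<T_{2}<\infty$ and assume $\theta\in L^{\infty}([T_{1},T_{2}];\dot C^{\sigma})$ with $\sigma>\alpha$. First I would recast the equation on the interval $[T_{1},T_{2}]$ using Duhamel's formula for the operator $\partial_{t}+\Lambda^{\beta}$, i.e.\ $\theta(t)=e^{-(t-T_{1})\Lambda^{\beta}}\theta(T_{1})+\int_{T_{1}}^{t}e^{-(t-s)\Lambda^{\beta}}\big(-u\cdot\nabla\theta\big)(s)\,ds=e^{-(t-T_{1})\Lambda^{\beta}}\theta(T_{1})-\int_{T_{1}}^{t}e^{-(t-s)\Lambda^{\beta}}\,\mathrm{div}\big(u\,\theta\big)(s)\,ds$, using $\mathrm{div}\,u=0$. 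The first term is smooth for $t>T_{1}$ since $\theta(T_{1})\in L^{2}$ and $e^{-h\Lambda^{\beta}}$ is infinitely smoothing; the work is in the nonlinear integral term.

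Next I would run the bootstrap in Hölder (Besov $B^{s}_{\infty,\infty}$) spaces. Suppose at a given stage $\theta\in L^{\infty}([T_{1}',T_{2}];\dot C^{\gamma})$ for some $\gamma\ge\sigma$ and some $T_{1}'\in[T_{1},T_{2})$. Then $u=\nabla^{\perp}\Lambda^{\beta-2}m(\Lambda)\theta\in \dot C^{\gamma+1-\beta-\alpha}$ (up to an $\epsilon$-loss, absorbed by the logarithmic room in (A2)), and the product $u\theta$ lies in $\dot C^{\gamma'}$ with $\gamma'=\min\{\gamma,\gamma+1-\beta-\alpha\}=\gamma+1-\beta-\alpha$ when $1-\beta-\alpha<0$, or $\gamma$ otherwise; in all cases $u\theta\in\dot C^{\gamma''}$ for some $\gamma''>\gamma-\alpha-\beta$. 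Applying $\mathrm{div}$ costs one derivative and the operator $\int_{T_{1}'}^{t}e^{-(t-s)\Lambda^{\beta}}\,ds$ applied to a $\dot C^{\delta}$ function gains $\beta$ derivatives away from the initial time (the standard estimate $\|e^{-h\Lambda^{\beta}}f\|_{\dot C^{\delta+\beta}}\lesssim h^{-1}\|f\|_{\dot C^{\delta}}$, integrated in $h$), so that on $[T_{1}'',T_{2}]$ with $T_{1}''>T_{1}'$ one gains a net $\beta-1+\beta''$ where the precise count is: the new regularity exponent increases by a fixed amount $\eta:=\min\{\beta-\alpha,\, 1-\alpha\}>0$ (here the hypothesis $\sigma>\alpha$, equivalently $\eta>0$, is exactly what makes the iteration gain rather than lose). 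Iterating finitely many times over a telescoping sequence of times $T_{1}<t_{1}<t_{2}<\cdots$ shrinking toward $T_{1}$, one reaches $\theta\in L^{\infty}([t_{*},T_{2}];\dot C^{1+})$ for any $t_{*}>T_{1}$; once the velocity is Lipschitz one upgrades to $H^{s}$ for all $s$ by standard energy estimates along the characteristics (or a further Besov bootstrap), and then $\theta\in C^{\infty}(\R^{2}\times(T_{1},T_{2}])$ follows, e.g.\ from Proposition \ref{prop:glob}-type parabolic smoothing for the $\Lambda^{\beta}$-dissipative equation together with Schauder-type estimates.

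The main obstacle is making the gain-of-regularity estimate for the nonlinear term \emph{uniform} and \emph{quantitative} despite the singular, only logarithmically controlled, velocity: one must show that the derivative loss in $u\theta$ (namely $\alpha+\epsilon$ from $\Lambda^{\beta-2}m(\Lambda)$ combined with $\mathrm{div}$) is strictly compensated by the $\beta$-gain of the dissipation, which forces the condition $\sigma>\alpha$ and not merely $\sigma>0$. Concretely the delicate points are: (i) the paraproduct/Bony decomposition estimate $\|u\theta\|_{\dot C^{\gamma''}}\lesssim \|u\|_{\dot C^{\gamma+1-\beta-\alpha}}\|\theta\|_{L^{\infty}}+\|u\|_{L^{\infty}}\|\theta\|_{\dot C^{\gamma}}$ with the low-frequency parts of $u$ controlled via $\|\theta\|_{L^{1}\cap L^{2}}$ (this is where the $L^{1}\cap L^{2}$ assumption is used, exactly as in \eqref{eq:psiEs2}); (ii) keeping track of the $B^{s}_{\infty,1}$ versus $B^{s}_{\infty,\infty}$ endpoint issues when feeding the gained Hölder regularity back into the heat-type estimate, which is handled by sacrificing an arbitrarily small $\epsilon$ at each (finite) step; and (iii) verifying that the dissipative semigroup estimate $\|\Lambda^{a}e^{-h\Lambda^{\beta}}\|_{\dot C^{\delta}\to\dot C^{\delta}}\lesssim h^{-a/\beta}$ is integrable in $h$ near $0$ precisely when the total derivative demand $a=1+\alpha+\epsilon$ stays below $\beta\cdot(\text{something})$—which again is the arithmetic encoded in $\sigma>\alpha$. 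I would present the iteration with an explicit running exponent $\gamma_{n}=\sigma+n\eta$ and running time $t_{n}=T_{1}+(T_{2}-T_{1})2^{-n-1}$ added, stop once $\gamma_{n}>1$, and then invoke the classical higher-regularity theory for the dissipative transport equation to conclude $C^{\infty}$.
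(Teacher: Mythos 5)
The paper itself does not prove Lemma \ref{lem:RC}: it invokes the key Bernstein-type bound \eqref{eq:mBern} and defers to the Bony paradifferential method of \cite{CW} and \cite{MX11}, so your overall strategy (a H\"older/Besov bootstrap driven by the $\beta$-smoothing of the dissipation, with the low frequencies of $u$ controlled through $\theta\in L^1\cap L^2$ as in \eqref{eq:psiEs2}) is indeed the intended route. However, your regularity bookkeeping contains a genuine gap that makes the iteration fail as written. If you estimate $\mathrm{div}(u\theta)$ by the crude product rule $\|u\theta\|_{\dot C^{\gamma''}}\lesssim \|u\|_{\dot C^{\gamma+1-\alpha-\beta}}\|\theta\|_{L^\infty}+\|u\|_{L^\infty}\|\theta\|_{\dot C^{\gamma}}$, you get $\gamma''=\min\{\gamma,\gamma+1-\alpha-\beta\}$, then lose one derivative to $\mathrm{div}$ and regain only $\beta$ from the semigroup; the net change is $\beta-1\le 0$ when $\alpha+\beta\le 1$ and $-\alpha<0$ when $\alpha+\beta>1$, so each step \emph{loses} regularity. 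The missing idea is precisely the commutator/transport structure at the heart of \cite{CW} and \cite{MX11}: the dangerous paraproduct term $T_u\cdot\nabla\theta$ (low frequencies of $u$ against high frequencies of $\nabla\theta$) must not be estimated as a product but absorbed as a transport term, via the commutator $[\Delta_j,S_{j-1}u\cdot\nabla]$, which recovers one full derivative on that piece. With this, the remainder terms of the Bony decomposition live in $\dot C^{2\sigma-\alpha-\beta}$ (up to the usual $\epsilon$), and after the $\beta$-gain one lands in $\dot C^{2\sigma-\alpha}$, i.e.\ the correct per-step gain is $\sigma-\alpha>0$ --- which is exactly where the hypothesis $\sigma>\alpha$ enters.

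Relatedly, your stated gain $\eta=\min\{\beta-\alpha,\,1-\alpha\}$ and the claim that ``$\eta>0$ is equivalent to $\sigma>\alpha$'' are both incorrect: $\min\{\beta-\alpha,1-\alpha\}>0$ requires $\alpha<\beta$, which is neither assumed in the lemma nor true in the motivating example (supercritical SQG with $m(\Lambda)=\Lambda^{1-\beta}$ and $\beta<1/2$ has $\alpha=1-\beta>\beta$). Under the lemma's actual hypotheses your iteration would therefore stall or regress in that regime. Once the accounting is corrected to a gain of $\sigma_n-\alpha$ per step (so the gain improves as $\sigma_n$ grows and finitely many steps reach $\dot C^{1+}$), the rest of your outline --- telescoping the times toward $T_1$, upgrading to $H^s$ for all $s$ once $\nabla u\in L^\infty$, and concluding $C^\infty$ by parabolic smoothing --- is sound and consistent with what the cited references carry out.
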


If $m(\Lambda)=\D^{1-\beta}$ ($\beta\in (0,1]$), the equation \eqref{eq:gSQG} becomes the classical $(SQG)_\beta$ equation \eqref{qg} with either critical or supercritical dissipation,
and Lemma \ref{lem:RC} is nothing but  the well-known regularity criteria that can be found for instance in \cite{CV,CW,Silv}. If $m(\Lambda)=\D^\alpha$ ($\alpha\in(0,1)$) and $\alpha+\beta>1$, then
\eqref{eq:gSQG} becomes the generalized $(SQG)$ equation which has been studied in \cite{MX11}, and Lemma \ref{lem:RC} is a direct consequence of \cite{MX11} (see Prop 5.1).
For the $(gSQG)_\beta$ equation \eqref{eq:gSQG} we are interested in,
due to the key estimate \eqref{eq:mBern}, we may for instance apply the Bony's paradifferential method as it was done in \cite{CW} and \cite{MX11} to conclude Lemma \ref{lem:RC}, and we omit the details here. \\

In order to prove the eventual regularity, we have to show that the regularized solution $\theta^\epsilon$ of equation \eqref{ap-gSQG2}
verifies the regularity criteria \eqref{eq:RCcd} uniformly in $\ep$ after a finite time.
To this end, we shall present an H\"older regularity criterion in terms of a suitable modulus of continuity,
which reduces the problem to showing that the solution $\theta^\epsilon$ uniformly in $\epsilon$ obeys this modulus of continuity at some time $t_0>0$.
More precisely, we have the following lemma.
\begin{lemma}\label{lem:RCpres}
Let $\nu=1$, $\beta\in(0,1]$, and assume that $m$ satisfies  (A1)(A2) (A4)(A5) with $\alpha\in (0,1)$, $\lambda\in [0,1)$.
For all $\sigma\in(\alpha,\min\{\alpha+\beta,1\})$, we define the following modulus of continuity
\begin{equation}\label{moc1}
  \omega(\xi)=
  \begin{cases}
    \kappa \frac{1}{m(\delta^{-1})} \delta^{-\sigma} \xi^\sigma,\quad & \mathrm{for   }\;\,0<\xi\leq \delta, \\
    \kappa \frac{1}{m(\delta^{-1})} + \gamma \int_\delta^\xi \frac{1}{\eta\, m(\eta^{-1})}\dd \eta,\quad & \mathrm{for   }\;\,\xi>\delta,
  \end{cases}
\end{equation}
with $0<\gamma<\kappa<1$ and $\delta>0$. Then, under some smallness condition on $\gamma$ and $\kappa$, namely that
\begin{equation}\label{kg-cd1}
\begin{split}
  & 0<\kappa < \min\left\{\frac{C_1(1-\sigma)(\alpha+\beta-\sigma)}{16 C_2},\frac{1}{2C_2\sigma}\right\}, \quad \\
  & 0<\gamma < \min\left\{ \frac{1}{2C_2},\sigma\kappa,\frac{(1-C_\alpha)^\alpha}{2} \kappa,\frac{C_1 \beta C_\alpha (1-C_\alpha)}{12 C_2}\right\},
\end{split}
\end{equation}
with $C_\alpha=\frac{2^\alpha -1}{\alpha}$, and $C_1$, $C_2$ the constants appearing in Lemmas \ref{lem-mocdiss} and \ref{lem-mocdrf}, the following assertion holds true:
\begin{equation}\label{eq:RCcd2}
  \textrm{if there exits $T_0 \in [0,\infty)$ such that $\theta^\epsilon(T_0)$ uniformly in $\epsilon$ obeys $\omega(\xi)$},
\end{equation}
then, the solution $\theta^\epsilon(t)$ preserves this modulus of continuity $\omega(\xi)$ for all time $t\in [T_0,\infty)$,
consequently, $\theta^\epsilon \in L^\infty([T_0,\infty);\dot C^\sigma(\R^2))$ uniformly in $\epsilon$.
\end{lemma}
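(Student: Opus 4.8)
The plan is to run Kiselev's modulus-of-continuity method on the regularized equation \eqref{ap-gSQG2} (with $\nu=1$), using as comparison function the \emph{stationary} modulus $\omega(\xi,t)\equiv\omega(\xi)$ from \eqref{moc1}. First I would verify that $\omega$ is an admissible modulus of continuity in the sense of the Definition. It is continuous, piecewise $C^2$, and nondecreasing (both branches have positive derivative). For concavity: on $(0,\delta)$ one has $\omega''<0$ because $\sigma<1$; on $(\delta,\infty)$ one computes $\omega'(\xi)=\gamma\,(\xi\, m(\xi^{-1}))^{-1}$, so $\omega''\le 0$ amounts to $\xi\mapsto\xi\, m(\xi^{-1})$ being nondecreasing on all of $(0,\infty)$, which is exactly \eqref{m-fac3} of Lemma~\ref{lem:m} with $\rho=1$ --- this is precisely where assumption (A5), rather than the weaker (A3), is used, since $\delta$ is allowed to be arbitrary (cf. Remark~\ref{rmk:assum}); and at the corner $\xi=\delta$ the left one-sided derivative $\sigma\kappa\,(\delta\, m(\delta^{-1}))^{-1}$ exceeds the right one $\gamma\,(\delta\, m(\delta^{-1}))^{-1}$ precisely because $\gamma<\sigma\kappa$ in \eqref{kg-cd1}. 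Finally $\omega(0+)=0$ while $\omega'(0+)=+\infty$ (again $\sigma<1$), and $\omega$ is unbounded since $\omega'(\xi)\asymp\gamma\,\xi^{\lambda-1}$ as $\xi\to\infty$ by (A4) with $\lambda\in[0,1)$, hence $\int^{\infty}\omega'=+\infty$; thus hypotheses (1)--(3) of Proposition~\ref{prop-GC} hold for each fixed $\epsilon$ (note $\theta^\epsilon$ is smooth, so $\theta^\epsilon(\cdot,t)\in L^\infty$).

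Since $\partial_t\omega\equiv 0$ and $2\epsilon\,\partial_{\xi\xi}\omega\le 0$, the key inequality \eqref{keyGC} of Proposition~\ref{prop-GC} is implied by the $\epsilon$-independent bound $\Omega(\xi)\,\omega'(\xi)+D(\xi)<0$ for all $\xi>0$, under the breakthrough scenario \eqref{scena}. Here I would feed in the drift estimate \eqref{Ome-es1} of Lemma~\ref{lem-mocdrf}(1) (applicable because $m$ satisfies (A1)(A2)(A4)(A5)), multiply by $\omega'(\xi)>0$, and use
\[
\xi\, m(\xi^{-1})\,\omega'(\xi)\le\begin{cases}\sigma\kappa,& 0<\xi\le\delta,\\ \gamma,& \xi>\delta,\end{cases}
\]
where for $\xi\le\delta$ this follows from the monotonicity of $r\mapsto r^{\sigma} m(r^{-1})$ (Lemma~\ref{lem:m}, $\rho=\sigma\ge\alpha$). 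Since $D(\xi)<0$ strictly --- by Lemma~\ref{lem-mocdiss} and the strict concavity of $\omega$ --- and $C_2\sigma\kappa<\tfrac12$, $C_2\gamma<\tfrac12$ by \eqref{kg-cd1}, the leading term $-C_2\,\xi\, m(\xi^{-1})\,\omega'(\xi)\,D(\xi)$ is \emph{strictly} dominated by $-\tfrac12 D(\xi)$, and one is reduced to proving
\[
C_2\,\xi\,\omega'(\xi)\int_\xi^\infty\frac{\omega(\eta)\, m(\eta^{-1})}{\eta^{1+\beta}}\,\dd\eta+C_2\,\xi^{1-\beta}\, m(\xi^{-1})\,\omega(\xi)\,\omega'(\xi)\le\tfrac12\,|D(\xi)|.
\]

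This inequality is the heart of the matter, and I expect its verification to be the main obstacle. On the left, the second term equals $\xi^{-\beta}\omega(\xi)\cdot[\xi\, m(\xi^{-1})\omega'(\xi)]\le C_2\sigma\kappa\,\xi^{-\beta}\omega(\xi)$, and the first one is controlled by splitting $\int_\xi^\infty$ at $\delta$: on $[\xi,\delta]$ one writes $\omega(\eta) m(\eta^{-1})=\kappa\,\delta^{-\sigma} m(\delta^{-1})^{-1}\,\eta^{\sigma-\alpha}\,(\eta^{\alpha} m(\eta^{-1}))\le\kappa\,\delta^{\alpha-\sigma}\eta^{\sigma-\alpha}$, using now \eqref{m-fac3} with $\rho=\alpha$, so $\int_\xi^\delta$ carries the power $\eta^{\sigma-\alpha-1-\beta}$, integrable at $0$ precisely because $\sigma<\alpha+\beta$, which produces a factor $(\alpha+\beta-\sigma)^{-1}$; on $[\delta,\infty)$ one uses the explicit form of $\omega$ together with the doubling-type estimate $\omega(2\eta)-\omega(\eta)\le\gamma\, C_\alpha\, m(\eta^{-1})^{-1}$, $C_\alpha=(2^\alpha-1)/\alpha$, again a consequence of the monotonicity of $r^{\alpha} m(r^{-1})$. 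On the right, Lemma~\ref{lem-mocdiss} and the concavity of $\omega$ give, for $0<\xi\le\delta$, $|D(\xi)|\gtrsim C_1\,\sigma(1-\sigma)\,\omega(\xi)\,\xi^{-\beta}$ from a second-order Taylor expansion of $(1\pm t)^\sigma$ in the inner integral (with an analogous bound near the corner $\xi=\delta$, where the downward jump of $\omega'$ only reinforces concavity), and for $\xi>\delta$, $|D(\xi)|\gtrsim C_1\,\beta^{-1}\,\omega(\xi)\,\xi^{-\beta}$ (up to a $C_\alpha$-dependent factor) from the outer integral, once one observes that $\omega(2\eta+\xi)-\omega(2\eta-\xi)\le 2\xi\,\omega'(2\eta-\xi)$ becomes negligible against $2\omega(\xi)$ for $\eta\ge c(\sigma,\beta)\,\xi$. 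Comparing powers of $\xi$ (they match since $\delta^{\alpha-\sigma}\xi^{\sigma-\alpha}\le 1$ for $\xi\le\delta$) and the logarithmic factors arising for $\xi>\delta$, the inequality closes exactly under the remaining conditions in \eqref{kg-cd1}, namely $\kappa<\frac{C_1(1-\sigma)(\alpha+\beta-\sigma)}{16C_2}$ in the power regime and the $\gamma$-conditions involving $C_\alpha$ in the growing regime; the truly delicate point is to make the dissipation lower bound on $|D(\xi)|$ robust across the corner $\xi=\delta$ and through the nearly-logarithmic regime $\xi>\delta$, where the concavity gain is weakest, and to match it with the drift remainder term by term.

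Once \eqref{keyGC} is established for all $\xi>0$, Proposition~\ref{prop-GC} applied on the interval $[T_0,\infty)$ yields that $\theta^\epsilon(t)$ obeys $\omega(\xi)$ for every $t\ge T_0$ and every $\epsilon>0$. Reading \eqref{moc1} on $0<\xi\le\delta$ then gives $|\theta^\epsilon(x,t)-\theta^\epsilon(y,t)|\le\kappa\, m(\delta^{-1})^{-1}\delta^{-\sigma}|x-y|^\sigma$ whenever $|x-y|\le\delta$; combined, for $|x-y|>\delta$, with a uniform-in-$\epsilon$ bound $\|\theta^\epsilon(\cdot,t)\|_{L^\infty}\le C(\omega,\|\theta_0\|_{L^2})$ --- which holds because $\theta^\epsilon(t)$ obeys the sublinear modulus $\omega$ and satisfies $\|\theta^\epsilon(t)\|_{L^2}\le\|\theta_0\|_{L^2}$ --- this yields $\theta^\epsilon\in L^\infty([T_0,\infty);\dot C^\sigma(\R^2))$ uniformly in $\epsilon$, as claimed.
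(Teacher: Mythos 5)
Your proposal is correct and follows essentially the same route as the paper: verify that $\omega$ is an admissible modulus (with (A5) entering exactly where you say, via the monotonicity of $r\mapsto r\,m(r^{-1})$ on all of $(0,\infty)$), reduce via Proposition \ref{prop-GC} to $\Omega\,\omega'+D<0$, absorb the $-C_2\xi m(\xi^{-1})\omega'(\xi)D(\xi)$ term into $-\tfrac12 D$ using $\xi m(\xi^{-1})\omega'(\xi)\le\max\{\sigma\kappa,\gamma\}$, and close by the same two-regime power counting (the splitting of $\int_\xi^\infty$ at $\delta$, the bound $\eta^{\alpha}m(\eta^{-1})\le\delta^{\alpha}m(\delta^{-1})$, the doubling estimate $\omega(2\xi)\le\omega(\xi)+C_\alpha\gamma/m(\xi^{-1})$ combined with $\gamma/m(\xi^{-1})\lesssim\omega(\xi)$, and the concavity/outer-integral lower bounds on $|D|$), under precisely the conditions \eqref{kg-cd1}. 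The only cosmetic difference is the final passage to the uniform $\dot C^\sigma$ bound, where the paper shows $\xi\mapsto\omega(\xi)/\xi^\sigma$ is nonincreasing (using $\gamma<\sigma\kappa$) while you patch the large-scale regime with an $L^\infty$ bound; both are valid.
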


\begin{remark}\label{rmk:RCgene}
If additionally we have
\begin{equation}\label{eq:RCcd3}
  \sup_{t\in[T_0,\infty)}\|\theta^\epsilon(t)\|_{L^\infty(\R^2)}< \frac{1}{2}\omega\left(\frac{1}{2b_1}\right)
\end{equation}
where $b_1$ is the constant appearing in the condition (A3), then by replacing $\omega(\xi)$ in \eqref{moc1} with the following modulus of continuity
\begin{equation}\label{moc1.2}
  \omega(\xi)=
  \begin{cases}
    \kappa \frac{1}{m(\delta^{-1})} \delta^{-\sigma} \xi^\sigma,\quad & \mathrm{for   }\;\,0<\xi\leq \delta, \\
    \kappa \frac{1}{m(\delta^{-1})} + \gamma \int_\delta^\xi \frac{1}{\eta\, m(\eta^{-1})}\dd \eta,\quad & \mathrm{for   }\;\,\delta<\xi\leq \frac{1}{2b_1}, \\
    \omega(\frac{1}{2b_1}),\quad & \mathrm{for   }\;\, \xi\geq \frac{1}{2b_1},
  \end{cases}
\end{equation}
the same conclusion as  Lemma \ref{lem:RCpres} also holds for the  function $m$ under the assumptions (A1)-(A4).
Indeed, one can similarly check that $\omega(\xi)$ defined by \eqref{moc1.2} is a modulus of continuity satisfying the condition (3) of Proposition \ref{prop-GC};
then due to \eqref{eq:RCcd3}, we only need to check the inequality \eqref{Targ1} for all $0<\xi\leq \frac{1}{2b_1}$, futhermore since we have \eqref{Ome-es1-2} and $\omega'(\eta)=0$ for all $\eta>\frac{1}{2b_1}$,
we observe that the justification is the same as that of Lemma \ref{lem:claim}.
Moreover, \eqref{efact1} can also be obtained in a similar manner, thus the preservation of the modulus of continuity $\omega(\xi)$ implies the desired $\sigma$-H\"older regularity, namely
\begin{equation}\label{CsgmEs}
  \sup_{t\in [T_0,\infty)}\|\theta^\epsilon(t)\|_{\dot C^\sigma(\R^2)} \leq \frac{\kappa }{m(\delta^{-1})} \delta^{-\sigma}.
\end{equation}
\end{remark}

\begin{proof}[{\bf{Proof of Lemma \ref{lem:RCpres}}}]

We first check that $\omega(\xi)$ defined in \eqref{moc1} is indeed a modulus of continuity.
Let us show that $\omega$ is an increasing and concave function for all $\xi>0$. To do so, we observe that for every $\xi \in (0,\delta),$ one has
\begin{equation*}
  \omega'(\xi)= \frac{\kappa \sigma }{m(\delta^{-1})} \delta^{-\sigma} \xi^{\sigma-1}>0,\quad\textrm{and}\quad \omega''(\xi) = - \frac{\kappa \sigma (1-\sigma)}{m(\delta^{-1})} \delta^{-\sigma} \xi^{\sigma-2}<0,
\end{equation*}
and for all $\xi>\delta$,
\begin{equation}\label{omeg-fact1}
  \omega'(\xi) = \frac{\gamma}{\xi m(\xi^{-1})}>0,\quad \textrm{and}\quad \omega''(\xi) = - \frac{\gamma }{\xi^2 m(\xi^{-1})}\left(1-\frac{\xi^{-1} m'(\xi^{-1})}{m(\xi^{-1})}\right)\leq - \frac{\gamma (1-\alpha)}{\xi^2 m(\xi^{-1})}<0,
\end{equation}
and for $\xi =\delta$, then
\begin{equation*}
  \omega'(\delta-)= \frac{\kappa \sigma}{ \delta m(\delta^{-1})},\quad \textrm{and} \quad \omega'(\delta+)=\frac{\gamma}{\delta m(\delta^{-1})},
\end{equation*}
thus if $\gamma< \sigma\kappa$, we infer that $\omega(\xi)$ is increasing and concave for all $\xi>0$.

Besides, it is easy to see that $\omega(0+)=0$ and that
$\omega'(0+)=\displaystyle\lim_{\xi\rightarrow 0+}\kappa\sigma\frac{1}{m(\delta^{-1})}\delta^{-\sigma} \xi^{\sigma-1}= +\infty,$
hence, $\omega$ satisfies the condition (3) in Proposition \ref{prop-GC}. \\

Then, according to Proposition \ref{prop-GC}, it suffices to prove that for all $t>T_0$ and all $\xi\in\{\xi>0:\omega(\xi) \leq 2 \|\theta^\epsilon(t)\|_{L^\infty}\}$,
\begin{equation}\label{Targ1}
  \Omega(\xi,t)\omega'(\xi) + D(\xi,t) <0,
\end{equation}
where $\Omega(\xi,t)$, $D(\xi,t)$ are respectively defined by \eqref{GC-Om} and \eqref{GC-D}
under the scenario \eqref{scena} with $\omega(\cdot)$ in place of $\omega(\cdot,t)$. Using Lemma \ref{lem-mocdiss} and Lemma \ref{lem-mocdrf}, we obtain
\begin{equation*}
\begin{split}
  D(\xi,t) \leq
  \,C_1 \int_0^{\frac{\xi}{2}}  \frac{\omega(\xi+2\eta)+\omega (\xi-2\eta) -2\omega (\xi)}{\eta^{1+\beta}}{d} \eta
   + C_1 \int_{\frac{\xi}{2}}^\infty  \frac{\omega (2\eta+\xi)-\omega (2\eta-\xi) -2\omega (\xi)}{\eta^{1+\beta}}{d}\eta,
\end{split}
\end{equation*}
and
\begin{equation}\label{Omega2}
  \Omega(\xi,t) \leq -C_2\xi m(\xi^{-1}) D(\xi,t) + C_2 \xi\int_\xi^\infty \frac{\omega(\eta)m(\eta^{-1})}{\eta^{1+\beta}}\mathrm{d}\eta +C_2 \xi^{1-\beta} m(\xi^{-1})\omega(\xi).
\end{equation}

In order to show \eqref{Targ1}, we shall divide the study into two cases. \\

\noindent \textbf{Case 1: $0<\xi \leq \delta$.} \\

We first focus on the contribution coming from the velocity, and we see that,
\begin{align*}
 \mathcal{Q}(\xi) \equiv \int_\xi^\infty\frac{\omega(\eta)m(\eta^{-1})}{\eta^{1+\beta}}\dd \eta & =  -\int_\xi^\infty {\omega(\eta)m(\eta^{-1})}\partial_{\eta}\left\{\frac{\eta^{-\beta}}{\beta}\right\}\dd \eta \
\end{align*}
then we integrate by parts, and by using the following inequalities
\begin{equation}\label{eq:dom}
  (\omega(\eta)m(\eta^{-1}))'=\omega'(\eta)m(\eta^{-1})-\frac{1}{\eta^{2}}\omega(\eta)m'(\eta^{-1})\leq\omega'(\eta)m(\eta^{-1}),
\end{equation}
and
\begin{align}\label{eq:fact}
  \lim_{\eta\rightarrow \infty} \frac{\omega(\eta) m(\eta^{-1})}{\eta^\beta} & = \lim_{\eta\rightarrow \infty} \frac{\kappa m(\delta^{-1}) m(\eta^{-1})}{\eta^\beta}
  + \lim_{\eta\rightarrow \infty} \frac{\gamma m(\eta^{-1})}{\eta^\beta} \int_\delta^\eta \frac{1}{\tau m(\tau^{-1})}\dd \tau \nonumber \\
  & \leq \lim_{\eta\rightarrow \infty} \frac{\kappa m(\delta^{-1}) m(1)}{\eta^\beta}
  +  \lim_{\eta\rightarrow \infty}\frac{\gamma}{\eta^\beta} \int_\delta^\eta \frac{1}{\tau}\dd \tau =0,
\end{align}
we infer that
\begin{align*}
  \mathcal{Q}(\xi) \leq \frac{\omega(\xi) m(\xi^{-1})}{\beta \xi^\beta} + \frac{1}{\beta} \int_\xi^\infty \frac{\omega'(\eta) m(\eta^{-1})}{\eta^\beta} \dd \eta.
\end{align*}
Then, since $\omega'(\eta)= \kappa \sigma  \frac{1}{m(\delta^{-1})} \delta^{-\sigma} \eta^{\sigma-1}$ for $\eta\leq \delta$ and $\omega'(\eta)= \gamma \frac{1}{\eta\,m(\eta^{-1})}$ for $\eta>\delta$, we find that
\begin{align*}
 \mathcal{Q}(\xi) &\leq \frac{\omega(\xi) m(\xi^{-1})}{\beta\xi^\beta } + \frac{\kappa \sigma}{\beta \delta^\sigma m(\delta^{-1})} \int_\xi^\delta \frac{m(\eta^{-1})}{\eta^{1+\beta-\sigma}}\dd \eta +
  \gamma \int_\delta^\infty \frac{1}{\beta \eta^{1+\beta}} \dd \eta, \\
  &\leq \frac{\omega(\xi) m(\xi^{-1})}{\beta\xi^\beta } + \frac{\kappa \sigma \delta^{\alpha-\sigma}}{\beta } \int_\xi^\delta \frac{1}{\eta^{1+\beta+\alpha-\sigma}}\dd \eta +
  \frac{\gamma}{\beta^2 \delta^\beta} \\
  &\leq \frac{\omega(\xi) m(\xi^{-1})}{\beta\xi^\beta } + \frac{\kappa \sigma \delta^{\alpha-\sigma}}{\beta} \frac{1}{\alpha +\beta-\sigma}  \xi^{-(\alpha+\beta-\sigma)} +
  \frac{\gamma}{\beta^2 \delta^\beta}\\
  &\leq \frac{\omega(\xi) m(\xi^{-1})}{\beta\xi^\beta } + \frac{2\sigma\kappa }{\beta(\alpha+\beta-\sigma)}\frac{1}{\xi^\beta},
\end{align*}
where in the last line we used $\gamma<\sigma\kappa$.
By using the nondecreasing property of $r\mapsto r^\sigma m(r^{-1})$ ($\sigma>\alpha$), we get
\begin{equation*}
\begin{split}
  \xi^{1-\beta} m(\xi^{-1})\omega(\xi) \omega'(\xi) =\sigma\kappa^2 \frac{ \xi^\sigma m(\xi^{-1})\xi^{\sigma-\beta}}{ (\delta^\sigma m(\delta^{-1}))^2}
  \leq\sigma \kappa^2 \frac{\xi^{\sigma-\beta}}{\delta^\sigma m(\delta^{-1})},
\end{split}
\end{equation*}
and
\begin{equation*}
  \xi m(\xi^{-1}) \omega'(\xi) = \kappa \sigma\frac{\xi^\sigma m(\xi^{-1})}{\delta^\sigma m(\delta^{-1}) }\leq \kappa \sigma.
\end{equation*}
Then, \eqref{Omega2} and the above estimates allow us to conclude that, for all $\kappa \leq \frac{1}{2 C_2\sigma}$, we have
\begin{align*}
  \Omega(\xi,t) \omega'(\xi) & \leq  -C_2 \sigma\kappa D(\xi,t) + C_2\sigma \kappa \frac{1}{\delta^\sigma m(\delta^{-1})}
  \xi^{\sigma-\beta}\left(\frac{2 \kappa}{\beta} +\frac{2\sigma \kappa }{\beta(\alpha+\beta-\sigma)} \right) \\
  & \leq -\frac{1}{2} D(\xi,t) +  \frac{4 C_2\sigma \kappa^2}{\beta(\alpha+\beta-\sigma)}\frac{1}{\delta^\sigma m(\delta^{-1})}  \xi^{\sigma-\beta} .
\end{align*}

Then, we consider the contribution from the diffusion term. By using that $$\omega''(\xi)= -\kappa \sigma(1-\sigma)\frac{1}{\delta^\sigma m(\delta^{-1})} \xi^{\sigma-2},$$ we obtain
\begin{align*}
  D(\xi,t) & \leq C_1 \int_0^{\xi/2}\frac{\omega(\xi+2\eta)+\omega(\xi-2\eta)-2\omega(\xi)}{\eta^{1+\beta}}d\eta\\
  &\leq C_1 \int_0^{\xi/2}\frac{\omega''(\xi)2\eta^{2}}{\eta^{1+\beta}}d\eta \\
  & \leq -2 C_1  \kappa \sigma(1-\sigma)\frac{\xi^{\sigma-2}}{\delta^\sigma m(\delta^{-1})} \int_0^{\xi/2} \eta^{1-\beta} \dd \eta \\
  &\leq -2C_1 \sigma(1-\sigma)\kappa \frac{\xi^{\sigma-\beta}}{\delta^\sigma m(\delta^{-1})(2-\beta)2^{2-\beta}} \\
  & \leq -\frac{C_1 \sigma(1-\sigma)\kappa}{2} \frac{1}{\delta^\sigma m(\delta^{-1})}\xi^{\sigma-\beta}.
\end{align*}
Hence, we finally get
\begin{equation*}
  \Omega(\xi,t)\omega'(\xi) + D(\xi,t) \leq \sigma \kappa \frac{1}{\delta^\sigma m(\delta^{-1})}\xi^{\sigma-\beta}
  \left(\frac{4 C_2\kappa }{\beta(\alpha+\beta-\sigma)} - \frac{C_1 (1-\sigma)}{4}  \right)<0,
\end{equation*}
and the last inequality is ensured by choosing $\kappa < \min\left\{\frac{C_1\beta(1-\sigma)(\alpha+\beta-\sigma)}{16 C_2},\frac{1}{2C_2\sigma}\right\}$. \\

\noindent \textbf{Case 2: $\xi >\delta$.} \\

Once again, we first focus on the contribution coming from the velocity \eqref{Omega2}. We integrate by parts, and following what we did in Case 1, we find
\begin{equation}\label{est-omeg}
\begin{split}
  \int_\xi^\infty\frac{\omega(\eta)m(\eta^{-1})}{\eta^{1+\beta}} \dd \eta & \leq \frac{\omega(\xi)m(\xi^{-1})}{\beta\xi^\beta }
  +\frac{1}{\beta}\int_\xi^\infty\frac{\omega'(\eta)m(\eta^{-1})}{\eta^\beta } \dd \eta\\
  & \leq \frac{\omega(\xi)m(\xi^{-1})}{\beta\xi^\beta }+\frac{\gamma}{\beta}\int_\xi^\infty \frac{1}{\eta^{1+\beta}} \dd \eta
  \leq \frac{\omega(\xi)m(\xi^{-1})}{\beta\xi^\beta }+\frac{\gamma}{\beta^2\xi^\beta }.
\end{split}
\end{equation}
From \eqref{Omega2} and \eqref{est-omeg}, we see that
\begin{align*}
  \omega'(\xi)\Omega(\xi,t) &\leq-C_2\xi \omega'(\xi) m(\xi^{-1}) D(\xi,t) + C_2\omega'(\xi) \xi^{1-\beta}
  {\omega(\xi)m(\xi^{-1})}\left(\frac{1}{\beta}+\frac{\gamma}{\omega(\xi)m(\xi^{-1})\beta^2} +1\right),
\end{align*}
but we know that $\omega'(\xi) = \frac{\gamma}{\xi m(\xi^{-1})}$, thus $\omega'(\xi)\xi m(\xi^{-1})=\gamma$, and the last inequality becomes
\begin{eqnarray*}
\omega'(\xi)\Omega(\xi,t) &\leq&-C_2 \gamma D(\xi,t) + C_2 \gamma \xi^{-\beta}
{\omega(\xi)}\left(\frac{1}{\beta}+\frac{\gamma}{\omega(\xi)m(\xi^{-1})\beta^2} +1\right).
\end{eqnarray*}
Then, we may assume that $\gamma\leq \frac{1}{2C_2}$ so that
\begin{eqnarray}\label{Vitesse}
  \omega'(\xi)\Omega(\xi,t) &\leq&-\frac{1}{2}  D(\xi,t) + C_2 \gamma \xi^{-\beta}
  {\omega(\xi)}\left(\frac{1}{\beta}+\frac{\gamma}{\omega(\xi)m(\xi^{-1})\beta^2} +1\right).
\end{eqnarray}

On the other hand,  we have
\begin{eqnarray*}
  \omega(2\xi)&=&\omega(\xi)+\int_\xi^{2\xi}\frac{\gamma}{\eta m(\eta^{-1})}\dd \eta= \omega(\xi)+\int_\xi^{2\xi} \frac{1}{\eta^{1-\alpha}} \frac{\gamma}{\eta^{\alpha} m(\eta^{-1})} \dd \eta,
\end{eqnarray*}
since $\xi \mapsto \xi^{-\alpha} m(\xi^{-1})$ is an non-decreasing function (see \eqref{m-fac3}), we get
\begin{eqnarray}\label{10sip}
  \omega(2\xi)&\leq&\omega(\xi)+\frac{1}{\xi^\alpha m(\xi^{-1})}\int_\xi^{2\xi}\frac{\gamma}{\eta^{1-\alpha}}\dd\eta = \omega(\xi)+\frac{2^{\alpha}-1}{\alpha} \frac{\gamma}{m(\xi^{-1})}.
\end{eqnarray}
In the following, we need $\frac{2^{\alpha}-1}{\alpha} \frac{\gamma}{m(\xi^{-1})} <\omega(\xi)$, and to do so, we shall use the following lemma.
\begin{lemma}\label{lem:claim}
For all $\overline{C}>1$, let $\gamma$ be sufficiently small, namely
\begin{equation}\label{gam-cd}
  \gamma< \left(\frac{\overline{C}-1}{\overline{C}}\right)^\alpha\kappa,
\end{equation}
then we have
\begin{equation}\label{claim1}
  \frac{\gamma}{m(\xi^{-1})}\leq \overline{C}\omega(\xi),\qquad \forall \xi>\delta.
\end{equation}
\end{lemma}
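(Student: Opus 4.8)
The plan is to prove \eqref{claim1} by comparing $\xi$ with $\delta$ on two overlapping ranges, using in both cases only the monotonicity of $r\mapsto r^{\alpha}m(r^{-1})$ on $(0,\infty)$ coming from \eqref{m-fac3} (with $\rho=\alpha$), the explicit formula \eqref{moc1} for $\omega$ on $(\delta,\infty)$, and the smallness \eqref{gam-cd}. I would start by noting the trivial lower bound $\omega(\xi)\geq\omega(\delta)=\kappa/m(\delta^{-1})$ for all $\xi>\delta$, since the integral term in \eqref{moc1} is nonnegative.

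On the range $\delta<\xi\leq\frac{\overline{C}}{\overline{C}-1}\delta$ one has $\delta/\xi\geq\frac{\overline{C}-1}{\overline{C}}$, so monotonicity gives $m(\xi^{-1})\geq(\delta/\xi)^{\alpha}m(\delta^{-1})\geq\big(\tfrac{\overline{C}-1}{\overline{C}}\big)^{\alpha}m(\delta^{-1})$, whence by \eqref{gam-cd}
\[
  \frac{\gamma}{m(\xi^{-1})}\leq\frac{\gamma}{\big(\tfrac{\overline{C}-1}{\overline{C}}\big)^{\alpha}m(\delta^{-1})}<\frac{\kappa}{m(\delta^{-1})}=\omega(\delta)\leq\omega(\xi)\leq\overline{C}\,\omega(\xi).
\]
On the remaining range $\xi>\frac{\overline{C}}{\overline{C}-1}\delta$ I would set $\xi':=\frac{\overline{C}-1}{\overline{C}}\xi$, which is then $>\delta$, and use $\omega(\xi)\geq\gamma\int_{\xi'}^{\xi}\frac{d\eta}{\eta\,m(\eta^{-1})}$ together with the bound $m(\eta^{-1})\leq(\xi/\eta)^{\alpha}m(\xi^{-1})$ for $\eta\leq\xi$ (monotonicity again) to obtain
\[
  \omega(\xi)\geq\frac{\gamma}{\xi^{\alpha}m(\xi^{-1})}\int_{\xi'}^{\xi}\eta^{\alpha-1}\,d\eta=\frac{\gamma}{\alpha\,m(\xi^{-1})}\Big(1-\big(\tfrac{\overline{C}-1}{\overline{C}}\big)^{\alpha}\Big).
\]
It then remains to check the elementary scalar inequality $1-(1-t)^{\alpha}\geq\alpha t$ for $t,\alpha\in(0,1)$, which holds because $h(t)=1-(1-t)^{\alpha}-\alpha t$ satisfies $h(0)=0$ and $h'(t)=\alpha\big((1-t)^{\alpha-1}-1\big)>0$ on $(0,1)$; applied with $t=1/\overline{C}$ it gives $\frac1\alpha\big(1-(\tfrac{\overline{C}-1}{\overline{C}})^{\alpha}\big)\geq\frac1{\overline{C}}$, hence $\omega(\xi)\geq\frac{\gamma}{\overline{C}\,m(\xi^{-1})}$, i.e.\ \eqref{claim1}. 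Since the two ranges cover all $\xi>\delta$, the proof is complete.

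I do not expect any serious obstacle here; the only point needing a little care is the scalar inequality used on the second range. For the variant mentioned in Remark~\ref{rmk:RCgene} (function $m$ under (A1)--(A4) only), the argument is identical but restricted throughout to $\delta<\xi\leq\frac1{2b_1}$, where \eqref{m-fac2} supplies the same monotonicity. An equally short alternative is to argue directly that $g(\xi):=\overline{C}\,\omega(\xi)-\gamma/m(\xi^{-1})$ satisfies $g(\delta+)>0$ (since $\gamma<\kappa<\overline{C}\kappa$) and $g'(\xi)=\frac{\gamma}{\xi m(\xi^{-1})}\big(\overline{C}-\frac{\xi^{-1}m'(\xi^{-1})}{m(\xi^{-1})}\big)>0$ for $\xi>\delta$ by (A5); but the two-range computation above is what naturally produces the constant appearing in \eqref{gam-cd}.
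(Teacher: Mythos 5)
Your proof is correct and follows essentially the same route as the paper's: the same split of $(\delta,\infty)$ at $\frac{\overline{C}}{\overline{C}-1}\delta$, the same use of the monotonicity of $r\mapsto r^{\alpha}m(r^{-1})$ on both ranges, and the same lower bound on the integral over $[(1-1/\overline{C})\xi,\xi]$. The only (welcome) difference is that you make explicit the scalar inequality $1-(1-t)^{\alpha}\geq\alpha t$, which the paper uses silently in its final step $\frac{\gamma}{\alpha\,m(\xi^{-1})}\bigl(1-(1-1/\overline{C})^{\alpha}\bigr)\geq\overline{C}^{-1}\frac{\gamma}{m(\xi^{-1})}$.
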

\begin{proof}[{\bf Proof of Lemma \ref{lem:claim}}]
Indeed, for $\xi\in(\delta,\frac{\overline{C}}{\overline{C}-1}\delta)$, we infer, by \eqref{m-fac3}, that $m(\delta^{-1})\leq (\frac{\overline{C}}{\overline{C}-1})^\alpha m(\xi^{-1})$,
and by assuming \eqref{gam-cd}, we get
$\omega(\xi)\geq \omega(\delta)=\frac{\kappa}{m(\delta^{-1})}\geq\frac{(\overline{C}-1)^\alpha \kappa}{\overline{C}^\alpha m(\xi^{-1})}\geq \frac{ \gamma}{m(\xi^{-1})}$;
while for $\xi\in[\frac{\overline{C}}{\overline{C}-1}\delta,\infty)$, we find that
\begin{align*}
  \omega(\xi)\geq \gamma\int_\delta^{\xi}\frac{1}{\eta m(\eta^{-1})}\dd \eta
  &\geq\frac{\gamma}{\xi^\alpha m(\xi^{-1})}\int_\delta^{\xi}\frac{1}{\eta^{1-\alpha}}\dd \eta\\
  &\geq\frac{\gamma}{\xi^\alpha m(\xi^{-1})}\int_{(1-1/\overline{C})\xi}^{\xi}\frac{1}{\eta^{1-\alpha}}\dd \eta \geq \overline{C}^{-1}\frac{\gamma}{m(\xi^{-1})},
\end{align*}
which gives the claim \eqref{claim1} and ends the proof of Lemma \ref{lem:claim}.
\end{proof}
Then, we come back to \eqref{10sip}, and thanks to the previous lemma we obtain
\begin{equation*}
  \omega(2\xi)\leq\omega(\xi)+\frac{2^\alpha -1}{\alpha}\overline{C}\omega(\xi)
  =(1+C_\alpha \overline{C})\omega(\xi),
\end{equation*}
where we set $C_\alpha=\frac{2^\alpha -1}{\alpha}\in(0,1)$. Thus, by choosing $\overline{C}=\frac{1+C_\alpha }{2C_\alpha }>1$, we find
\begin{equation}\label{key-est1}
  \omega(2\xi)\leq(3/2+C_\alpha /2)\omega(\xi).
\end{equation}
Since $\omega(2\eta+\xi)-\omega(2\eta-\xi)\leq\omega(2\xi)$, we infer that
\begin{equation*}
\begin{split}
  D(\xi,t)\leq - C_1 \int_{\xi/2}^\infty\frac{(1/2-C_\alpha /2)\omega(\xi)}{\eta^{1+\beta}}d\eta \leq -\frac{C_1(1-C_\alpha )}{2\beta} \frac{\omega(\xi)}{\xi^\beta }.
\end{split}
\end{equation*}

We shall, once again, use Lemma \ref{lem:claim} to get a nice estimate of the contribution coming from the nonlinear part. Using \eqref{claim1}, one finds
\begin{equation}
\begin{split}
  \Omega(\xi,t) \omega'(\xi)   & \leq -\frac{1}{2}D(\xi,t) + C_2\gamma\frac{\omega(\xi)}{\xi^\beta }\left(\frac{1}{\beta}+\frac{\overline{C}}{\beta^2}+1 \right),
\end{split}
\end{equation}
then, using that $\overline{C}=\frac{1+C_\alpha }{2C_\alpha }\leq \frac{1}{C_\alpha}$, we obtain
\begin{equation*}
  \Omega(\xi,t)\omega'(\xi) + D(\xi,t) \leq  \frac{\omega(\xi)}{\xi^\beta }
  \left(C_2\gamma \frac{3}{\beta^2 C_\alpha} - \frac{C_1 (1-C_\alpha)}{4\beta}  \right)<0,
\end{equation*}
where the last inequality holds by assuming $\gamma< \frac{C_1 \beta C_\alpha (1-C_\alpha)}{12 C_2}$. \\

Therefore, for~$\sigma\in(\alpha,\min\{\alpha+\beta,1\})$ and $\delta>0$, the solution $\theta^\epsilon(t)$ of the evolution equation \eqref{ap-gSQG2} on the time interval $[T_0,\infty)$
preserves the modulus of continuity $\omega(\xi)$ as long as $0<\gamma<\kappa <1$ are fixed constants satisfying
\begin{equation}\label{kg-cd0}
\begin{split}
  & 0<\kappa < \min\left\{\frac{C_1(1-\sigma)(\alpha+\beta-\sigma)}{16 C_2},\frac{1}{2C_2\sigma}\right\}, \quad \\
  & 0<\gamma < \min\left\{ \frac{1}{2C_2}, \frac{(1-C_\alpha)^\alpha}{2} \kappa,\frac{C_1 \beta C_\alpha (1-C_\alpha)}{12 C_2}\right\},
\end{split}
\end{equation}
where $C_\alpha =\frac{2^\alpha -1}{\alpha}\in (0,1)$ for $\alpha\in (0,1)$. \\

To end the proof of Lemma \ref{lem:RCpres}, we need to show the uniform $\dot C^\sigma$-regularity of $\theta^\epsilon$, and to this purpose, it suffices to prove the following claim
\begin{equation}\label{efact1}
  \textrm{the mapping $\xi\mapsto \frac{\omega(\xi)}{\xi^\sigma}$ for every $\xi >0$ is nonincreasing.}
\end{equation}
Indeed, if $\xi\in(0,\delta]$, then \eqref{efact1} follows easily from the definition of $\omega$ ; while if $\xi\in (\delta,\infty)$, then we have $(\frac{\omega(\xi)}{\xi^\sigma })'=\frac{\xi\omega'(\xi)-\sigma\omega(\xi)}{\xi^{\sigma+1}}$, and  thanks to \eqref{omeg-fact1} and the condition $\sigma>\alpha$, we infer that
\begin{equation*}
  (\xi\omega'(\xi)-\sigma\omega(\xi))'=\omega'(\xi)+\xi\omega''(\xi)-\sigma\omega'(\xi)\leq \frac{(1-\sigma-(1-\alpha))\gamma }{\xi m(\xi^{-1})}<0.
\end{equation*}
Besides, if $\gamma<\sigma\kappa$, then we have
\begin{equation}
\delta\omega'(\delta+)-\sigma\omega(\delta)=\frac{\gamma}{m(\delta^{-1})}-\frac{\sigma\kappa}{m(\delta^{-1})}<0,
\end{equation}
thus we find that $\frac{d}{d\xi}(\frac{\omega(\xi)}{\xi^\sigma })<0$, which implies that \eqref{efact1} holds for all $\xi\in (\delta,\infty)$. Note that we used a new condition on $\gamma$ which is $\gamma<\sigma\kappa$, therefore, the conditions on the constants $\kappa$  and $\gamma$ of the preserved modulus of continuity \eqref{kg-cd0} becomes \eqref{kg-cd1} as stated in Lemma \ref{lem:RCpres}. \\

Then, since  the modulus of continuity $\omega(\xi)$ associated to  $\theta^\epsilon(t)$, which is  defined by \eqref{kg-cd1}, is preserved in the time interval $[T_0,\infty)$, we obtain
by using  \eqref{efact1}  that
\begin{equation}\label{Cbetaest}
  \sup_{t\in [T_0,\infty)}\|\theta^\epsilon(t)\|_{\dot C^\sigma}= \sup_{t\in[T_0,\infty)} \sup_{x\neq y\in\R^2}\frac{|\theta^\epsilon(x,t)-\theta^\epsilon(y,t)|}{|x-y|^\sigma}
  \leq \sup_{x\neq y\in\R^2}\frac{\omega(|x-y|)}{|x-y|^\sigma} \leq \kappa \frac{1}{m(\delta^{-1})} \delta^{-\sigma},
\end{equation}
which corresponds to the $\sigma$-H\"older regularity of $\theta^\epsilon(t)$ uniformly in $\epsilon$.
This allows us to conclude Lemma \ref{lem:RCpres}.
\end{proof}

\subsection{{\bf{Eventual regularity of the weak solutions of the $(gSQG)_{\beta}$ equation \eqref{eq:gSQG}.}}}\label{subsec:EvReg}

As far as the regularized equation \eqref{ap-gSQG2} is concerned, we have already seen that, uniformly with respect to $\ep$, both the  $L^p$-estimate \eqref{L1L2Est} and the  energy estimate \eqref{EneEst} are verified.
Then, using \eqref{EneEst} along with the divergence-free property of $u^\epsilon$, one can show the following  $L^\infty$ estimate, that is, for all $t'>0$,
\begin{equation}\label{LinfEst}
  \|\theta^\epsilon\|_{L^\infty([t',+\infty)\times\R^2)} \leq \frac{C_\beta}{t'^{1/\beta}} \|\theta_0\|_{L^2(\R^2)},
\end{equation}
where $C_\beta>0$ is a constant independent of $\epsilon$. The argument used to prove \eqref{LinfEst} is the De Giorgi iteration method, which does, luckily,
not depend on the nature of the velocity $u^\epsilon$ since only the uniform energy estimate
and the divergence-free property of $u^\epsilon$ are used. Hence, the proof of \eqref{LinfEst} is quite similar to those in \cite{CV}, \cite{CW2} for instance, and we therefore omit the details. \\

Now according to Lemma \ref{lem:RC} and Lemma \ref{lem:RCpres}, in order to prove the eventual regularity of the global weak solution $\theta$,
it suffices to show that the smooth solution $\theta^\epsilon$ of the approximate equation \eqref{ap-gSQG2} is, uniformly in $\epsilon$,  $\sigma$-H\"olderian for some time $t'+T_*$.
To this end, we shall prove that the solution of the approximate equation \eqref{ap-gSQG2} obeys (uniformly in $\epsilon$) the following moduli of continuity $\omega(\xi,\xi_0)$: \\

\noindent for all $\xi_0>\delta$,
\begin{equation}\label{moc2}
  \omega(\xi,\xi_0)=
  \begin{cases}
    \frac{(1-\sigma)\kappa }{m(\delta^{-1})} + \gamma \int_\delta^{\xi_0}\frac{1}{\eta m(\eta^{-1})} \dd \eta
    -\frac{\gamma }{\xi_0 m(\xi_0^{-1})}(\xi_0-\delta) + \frac{\sigma \kappa }{\delta m(\delta^{-1})} \xi, \quad &\textrm{if   } 0<\xi\leq \delta, \\
    \frac{\kappa}{m(\delta^{-1})} + \gamma \int_\delta^{\xi_0} \frac{1}{\eta m(\eta^{-1})}\dd \eta
    - \frac{\gamma }{m(\xi_0^{-1})} + \frac{ \gamma }{\xi_0 m(\xi_0^{-1})} \xi ,\quad &\textrm{if   } \delta<\xi\leq \xi_0, \\
    \frac{\kappa}{m(\delta^{-1})} + \gamma \int_\delta^\xi \frac{1}{\eta m(\eta^{-1})}\dd \eta,\quad &\textrm{if   } \xi>\xi_0 ,
  \end{cases}
\end{equation}
and for all $\xi_0\leq\delta$,
\begin{equation}\label{moc3}
  \omega(\xi,\xi_0)=
  \begin{cases}
    \frac{(1-\sigma)\kappa}{m(\delta^{-1})} \delta^{-\sigma} \xi_0^\sigma  + \frac{\sigma \kappa }{m(\delta^{-1})}\delta^{-\sigma} \xi_0^{\sigma-1} \xi, \quad &\textrm{if   } 0<\xi\leq \xi_0, \\
    \frac{\kappa}{m(\delta^{-1})} \delta^{-\sigma} \xi^\sigma,\quad &\textrm{if   } \xi_0<\xi\leq \delta, \\
    \frac{\kappa }{m(\delta^{-1})} + \gamma \int_\delta^\xi \frac{1}{\eta m(\eta^{-1})}\dd \eta,\quad &\textrm{if   } \xi>\delta ,
  \end{cases}
\end{equation}
where $\sigma\in(\alpha, \min\{\alpha+\beta,1\})$, $\kappa,\gamma,\delta>0$ are constants that will be chosen later
and $\xi_0=\xi_0(t)$ is a time-dependent function given by \eqref{xi0}.
Motivated by \cite{Kis}, the basic idea to construct such moduli $\omega(\xi,\xi_0)$ consists in taking a tangent line at the point $\xi=\xi_0$ to $\omega(\xi)$ (given by \eqref{moc1})
and replacing $\omega(\xi)$ with this tangent line in the interval $0<\xi \leq\xi_0$.
However, since the one-sided derivatives of $\omega(\xi)$ at the point $\xi=\delta$ do not coincide, we make a crucial modification in the case $\xi_0 >\delta$,
that is, the tangent line mentioned above in  the interval $\delta\leq\xi\leq \xi_0$ is once again taken,
but for $0<\xi\leq \delta$ it is replaced by a straight line crossing $\omega(\delta+,\xi_0)$ with the larger slope $\omega'(\delta-)=\frac{\sigma \kappa}{ \delta m(\delta^{-1})}$.

 It is not difficult to check that $\omega(\xi,\xi_0)$  is an increasing concave function in $\xi$, for every $\xi>0$ and $\xi_0>0$. Furthermore, we may easily see that, for all $\xi_0>0$, we have $\omega(0+,\xi_0)>0$, and thus the condition (3) in Proposition \ref{prop-GC} holds.
For $\xi_0 = A_0 > \delta$ where $A_0$ is a  constant that will be chosen later, one observes that
\begin{align}\label{Omeg0A0}
  \omega(0+,A_0)&=(1-\sigma)\kappa\frac{1}{m(\delta^{-1})}+\gamma\int_\delta^{A_0}\frac{1}{\eta m(\eta^{-1})}\dd\eta-\frac{\gamma}{A_0m(A_0^{-1})}(A_0-\delta) \nonumber\\
  & \geq(1-\sigma)\kappa\frac{1}{m(\delta^{-1})}+\frac{\gamma}{  A_0^\alpha m(A_0^{-1})} \frac{A_0^\alpha -\delta^\alpha}{\alpha}-\frac{\gamma}{m(A_0^{-1})} \\
  & \geq \big((1-\sigma)\kappa-\gamma\big)\frac{1}{m(\delta^{-1})} + \frac{(1-\alpha)\gamma}{\alpha} \frac{ A_0^\alpha -\delta^\alpha}{A_0^\alpha m(A_0^{-1})}, \nonumber
\end{align}
and by assuming that $\gamma <(1-\sigma)\kappa$ and using \eqref{LinfEst}, one gets that the solution $\theta^\epsilon(t')$ obeys the modulus of continuity $\omega(\xi,A_0)$ provided that
\begin{equation}\label{eq:Adel-cd}
  \frac{(1-\alpha)\gamma}{\alpha} \frac{A_0^\alpha -\delta^\alpha}{A_0^\alpha m(A_0^{-1})}\geq \frac{2 C_\beta}{t'^{1/\beta}} \|\theta_0\|_{L^2(\R^2)}.
\end{equation}

The following key lemma shows that the breakdown of such moduli of continuity after $t'$ cannot happen.
\begin{lemma}\label{lem-mocev}
  Suppose that $\theta^\epsilon(t')$ obeys the modulus of continuity $\omega(\xi,A_0)$ given by \eqref{moc2}.
For $\rho >0$, let $\xi_0=\xi_0(t)$ be a time-dependent function defined by
\begin{equation}\label{xi0}
  \frac{d}{dt}\xi_0= - \rho \, \xi_0^{1-\beta} , \quad \xi_0(0)= A_0.
\end{equation}
Then, for some sufficiently small positive constants $\delta$, $\kappa$, $\gamma$ and $\rho$ (verifying \eqref{eq:Adel-cd} and \eqref{rkg-cdsum} below), the solution $\theta^\epsilon(x,t+t')$ to the regularized generalized $(gSQG)_{\beta,\epsilon}$ equation \eqref{ap-gSQG2}, uniformly in $\epsilon$, preserves the modulus of continuity $\omega(\xi,\xi_0(t))$ for every $\xi_0(t)> 0$.
\end{lemma}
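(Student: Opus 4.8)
The plan is to invoke the nonlocal maximum principle of Proposition~\ref{prop-GC}, applied to the time-translated solution $\theta^{\epsilon}(x,t+t')$ — which again solves \eqref{ap-gSQG2} (with $\nu=1$) with initial datum $\theta^{\epsilon}(\cdot,t')$ — with the time-dependent modulus $\omega(\xi,\xi_0(t))$ given by \eqref{moc2}--\eqref{moc3}, where $\xi_0(t)=(A_0^{\beta}-\rho\beta t)^{1/\beta}$ solves the ODE \eqref{xi0} and is positive precisely on $[0,T_*)$, $T_*=A_0^{\beta}/(\rho\beta)$. Since $\epsilon\geq 0$ and $\partial_{\xi\xi}\omega(\xi,\xi_0)\leq 0$ by concavity, the term $2\epsilon\,\partial_{\xi\xi}\omega$ in \eqref{keyGC} is nonpositive and can be dropped, so it suffices to prove, for every $t\in[0,T_*)$ and every $\xi\in\{\xi>0:\omega(\xi,\xi_0(t))\leq 2\|\theta^{\epsilon}(\cdot,t+t')\|_{L^{\infty}}\}$, the strict inequality
\begin{equation*}
  \partial_t\omega(\xi,\xi_0(t)) > \Omega(\xi,t)\,\partial_\xi\omega(\xi,\xi_0(t)) + D(\xi,t),
\end{equation*}
with $\Omega,D$ given by \eqref{GC-Om}--\eqref{GC-D} under the scenario \eqref{scena}. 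The remaining hypothesis of Proposition~\ref{prop-GC}, namely that $\theta^{\epsilon}(t')$ obeys $\omega(\xi,A_0)$, is exactly the content of the lower bound \eqref{Omeg0A0}, the $L^{\infty}$-estimate \eqref{LinfEst}, and the choice \eqref{eq:Adel-cd}, which is how \eqref{eq:Adel-cd} enters.

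First I would dispose of the structural prerequisites. For each fixed $\xi_0>0$, $\omega(\cdot,\xi_0)$ is a genuine modulus of continuity (continuous, increasing, concave, piecewise $C^{2}$): this follows by differentiating the three branches of \eqref{moc2}, resp.\ \eqref{moc3}, exactly as in the opening of the proof of Lemma~\ref{lem:RCpres}, using the monotonicity of $r\mapsto r^{-\alpha}m(r)$ from Lemma~\ref{lem:m} and the assumption $\gamma<\sigma\kappa$; moreover $\omega(0+,\xi_0)>0$, so condition~(3) of Proposition~\ref{prop-GC} holds. Since $t\mapsto\xi_0(t)$ is $C^{1}$ on $[0,T_*)$, the map $t\mapsto\omega(\xi,\xi_0(t))$ is $C^{1}$ in $t$, and for $\xi>\xi_0$ the modulus does not depend on $\xi_0$, hence is trivially continuous in $t$ uniformly near $\xi=\infty$; finally $\omega(\xi,\xi_0)\to\infty$ as $\xi\to\infty$ because $\int_\delta^{\infty}\tfrac{\dd\eta}{\eta\,m(\eta^{-1})}=\infty$ (by (A4), $m(\eta^{-1})\lesssim \eta^{-\lambda}$ with $\lambda<1$), so the inverse-function requirement is met. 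All of this is uniform in $\epsilon$.

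For the key inequality I would follow the scheme of Lemma~\ref{lem:RCpres}: bound $\Omega(\xi,t)$ by Lemma~\ref{lem-mocdrf}(1) (valid under (A1)(A2)(A4)(A5)) and $D(\xi,t)$ by Lemma~\ref{lem-mocdiss}, both evaluated at $\omega(\cdot,\xi_0)$, noting $D(\xi,t)\leq 0$. Imposing $C_2\,\xi\,m(\xi^{-1})\,\partial_\xi\omega(\xi,\xi_0)\leq\tfrac12$ — which holds once $\kappa,\gamma$ are small, since in the various branches $\partial_\xi\omega$ equals $\tfrac{\sigma\kappa}{\delta\,m(\delta^{-1})}$, or $\tfrac{\gamma}{\xi\,m(\xi^{-1})}$, or $\tfrac{\sigma\kappa}{m(\delta^{-1})}\delta^{-\sigma}\xi_0^{\sigma-1}$ — lets one absorb the term $-C_2\,\xi\,m(\xi^{-1})\,\partial_\xi\omega\cdot D\geq 0$ against $\tfrac12|D|$, reducing the target to
\begin{equation*}
  \partial_t\omega(\xi,\xi_0) > \tfrac12 D(\xi,t) + C_2\,\xi\,\partial_\xi\omega(\xi,\xi_0)\int_\xi^{\infty}\frac{\omega(\eta,\xi_0)\,m(\eta^{-1})}{\eta^{1+\beta}}\,\dd\eta + C_2\,\xi^{1-\beta}m(\xi^{-1})\,\omega(\xi,\xi_0)\,\partial_\xi\omega(\xi,\xi_0),
\end{equation*}
with $\partial_t\omega(\xi,\xi_0(t))=-\rho\,\xi_0^{1-\beta}\,\partial_{\xi_0}\omega(\xi,\xi_0)$. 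Since $\omega(\cdot,\xi_0)$ is obtained from $\omega$ by substituting, on $(0,\xi_0]$, the tangent line to $\omega$ at $\xi_0$ (with a further straightening on $(0,\delta]$ when $\xi_0>\delta$), one has $\omega(\cdot,\xi_0)\geq\omega(\cdot)$ and $\partial_{\xi_0}\omega(\xi,\xi_0)=\omega''(\xi_0)(\xi-\xi_0)\geq 0$ for $\xi<\xi_0$, while $\partial_{\xi_0}\omega\equiv 0$ for $\xi\geq\xi_0$; hence $\partial_t\omega\leq 0$, strictly negative when $\xi<\xi_0$. I would then split according to $\xi_0>\delta$ or $\xi_0\leq\delta$, and within each according to the position of $\xi$ relative to $\delta$ and $\xi_0$. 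When $\xi\geq\xi_0$ (resp.\ $\xi\geq\delta$ if $\xi_0\leq\delta$), $\omega(\xi,\xi_0)=\omega(\xi)$, $\partial_{\xi_0}\omega=0$, the integral $\int_\xi^\infty$ sees only the stationary branch of $\omega$, and the substitution of the larger modulus affects the integrals of Lemma~\ref{lem-mocdiss} only in a controlled way (by concavity it preserves the sign of $D$, and on the outer integral it can only make $D$ more negative), so the target collapses to the stationary estimates already established in Cases~1--2 of Lemma~\ref{lem:RCpres}. When $\xi<\xi_0$, $\omega(\cdot,\xi_0)$ is affine near $\xi$, so the second differences of $\omega(\cdot,\xi_0)$ vanish except where an argument leaves the affine piece; a direct computation in the spirit of \cite{Kis} then yields a lower bound on $-D(\xi,t)$ of the same order as the two positive terms but with a larger constant once $\gamma,\kappa$ are small, leaving a definite negative surplus, which exceeds $-\partial_t\omega=\rho\,\xi_0^{1-\beta}\partial_{\xi_0}\omega$ once $\rho$ is taken small enough — $\rho$ being the last parameter to be fixed. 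Collecting these constraints produces precisely \eqref{eq:Adel-cd} and \eqref{rkg-cdsum}.

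The main obstacle is the matching case $\xi_0>\delta$ with $\xi$ in or near the transition band $\delta\leq\xi\leq\xi_0$: there $\omega(\cdot,\xi_0)$ has been bent twice, once at $\delta$ and once at $\xi_0$, the one-sided derivative jump at $\delta$ must be handled using the larger slope $\tfrac{\sigma\kappa}{\delta\,m(\delta^{-1})}$, and the dissipation integral of Lemma~\ref{lem-mocdiss} straddles both $\delta$ and $\xi_0$, so the negative lower bound for $D$ must be extracted carefully while keeping exact track of the size of $\partial_{\xi_0}\omega=\omega''(\xi_0)(\xi-\xi_0)$ so that the $\rho$-contribution can still be absorbed. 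A secondary point of care is bookkeeping: the constants $C_1,C_2$ of Lemmas~\ref{lem-mocdiss}--\ref{lem-mocdrf} must be tracked, since they enter \eqref{rkg-cdsum} and, via \eqref{eq:Adel-cd}, ultimately the explicit eventual-regularity time $T_*$ used in the next subsection; and every estimate must be uniform in $\epsilon$, which it is, since only the energy inequality \eqref{EneEst}, the $L^{\infty}$-bound \eqref{LinfEst}, and the $\epsilon$-independent kernel bounds of Lemma~\ref{lem:kernel} are invoked.
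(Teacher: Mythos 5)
Your plan is correct and follows essentially the same route as the paper: invoke Proposition \ref{prop-GC}, bound $\Omega$ and $D$ via Lemmas \ref{lem-mocdrf} and \ref{lem-mocdiss}, absorb $-C_2\xi m(\xi^{-1})\partial_\xi\omega\,D$ into $\tfrac12 D$ by taking $\kappa,\gamma$ small, split into cases according to the positions of $\xi$, $\delta$, $\xi_0$, and beat the $\rho$-contribution $-\partial_t\omega=\rho\,\xi_0^{1-\beta}\partial_{\xi_0}\omega$ with the dissipation surplus coming from $\omega(0+,\xi_0)>0$. The only imprecision is the identity $\partial_{\xi_0}\omega=\omega''(\xi_0)(\xi-\xi_0)$, which holds on the tangent-line branch but on the straightened branch $0<\xi\leq\delta$ (when $\xi_0>\delta$) becomes $\omega''(\xi_0)(\delta-\xi_0)$; the bound $0\leq\partial_{\xi_0}\omega\leq\gamma/(\xi_0 m(\xi_0^{-1}))$ that your argument actually needs still holds, so this is harmless.
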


Using this lemma, we shall show the desired eventual H\"older regularity of global weak solution to \eqref{eq:gSQG}.
Indeed, using \eqref{xi0}, we have that $\xi_0(t)= (A_0^\beta - \rho \beta t)^{\frac{1}{\beta}}$, thus at the time $t_1$ defined by
\begin{equation}\label{t1}
  t_1= A_0^\beta/(\beta\rho)
\end{equation}
we have $\xi_0(t_1)= 0$ and consequently $\theta^\epsilon(x,t_1+t')$ obeys the modulus of continuity $\omega(\xi,0+) = \omega(\xi)$, with $\omega(\xi)$ being the modulus of continuity defined by \eqref{moc1}.
Hence, the property \eqref{eq:RCcd2} is satisfied at the time $T_0=t_1+t'$ and thus, Lemma \ref{lem:RCpres} implies that the modulus of continuity $\omega(\xi)$ given by \eqref{moc1}
is (uniformly in $\epsilon$) preserved by  the evolution of $\theta^\epsilon(t)$ in the time interval $t\geq t_1+t'$ (note that $\kappa,\gamma$ here satisfies \eqref{kg-cd1}). Therefore, from \eqref{Cbetaest} we get
\begin{equation}\label{the-Ces}
  \sup_{t\in [t_1+t',+\infty)}\|\theta^\epsilon(t)\|_{\dot C^\sigma(\R^2)} \leq \kappa \frac{1}{m(\delta^{-1})} \delta^{-\sigma},
\end{equation}
where $\delta>0$ is a fixed constant satisfying \eqref{eq:Adel-cd}, which combined with Lemma \ref{lem:RC} concludes the eventual regularity result.\\

Now it remains to prove Lemma \ref{lem-mocev}.

\begin{proof}[{\bf{{Proof of Lemma \ref{lem-mocev}}}.}]

According to Proposition \ref{prop-GC} and since we have (via \eqref{eq:Adel-cd})
$$\omega(A_0,\xi_0)> \omega(0+,A_0)> 2\|\theta^\epsilon\|_{L^\infty([t',+\infty)\times\R^2)},$$
we observe that it suffices to prove that for all $t>0$ so that $\xi_0(t)>0$ and $0<\xi \leq A_0$,
\begin{equation}\label{Targ3}
  - \partial_{\xi_0}\omega(\xi,\xi_0) \xi'_0(t)+ \Omega(\xi,t)\partial_\xi\omega(\xi,\xi_0) + D(\xi,t) + \epsilon \partial_{\xi\xi}\omega(\xi,\xi_0)<0,
\end{equation}
where $\omega(\xi,\xi_0)$ is given by \eqref{moc2}-\eqref{moc3}, and
\begin{equation}\label{Dxit3}
\begin{split}
  D(\xi,t)\leq &
  \,C_1 \int_0^{\frac{\xi}{2}}  \frac{\omega(\xi+2\eta,\xi_0)+\omega (\xi-2\eta,\xi_0) -2\omega (\xi,\xi_0)}{\eta^{1+\beta}}{d} \eta \\
  & + C_1 \int_{\frac{\xi}{2}}^\infty  \frac{\omega (2\eta+\xi,\xi_0)-\omega (2\eta-\xi,\xi_0) -2\omega (\xi,\xi_0)}{\eta^{1+\beta}}{d}\eta,
\end{split}
\end{equation}
and,
\begin{equation}\label{Omega3}
  \Omega(\xi,t) \leq -C_2\xi m(\xi^{-1}) D(\xi,t) + C_2 \xi\int_\xi^\infty \frac{\omega(\eta,\xi_0)m(\eta^{-1})}{\eta^{1+\beta}}{d}\eta +C_2 \xi^{1-\beta} m(\xi^{-1})\omega(\xi,\xi_0) .
\end{equation}
Note that in \eqref{Targ3}, if $\partial_{\xi_0}\omega(\xi,\xi_0)$ or $\partial_\xi\omega(\xi,\xi_0)$ does not exist, the larger value of the one-sided derivative will be taken.

Depending on the values of $\xi_0$ and $\xi$, we shall divide the study into 5 cases to prove \eqref{Targ3}. \\

\noindent \textbf{Case 1: $\xi_0 > \delta$, $0 <\xi \leq \delta$.} \\

In that case, \eqref{moc2} implies that
$$\omega(\xi,\xi_0)=(1-\sigma)\kappa \frac{1}{m(\delta^{-1})} + \gamma \int_\delta^{\xi_0}\frac{1}{\eta m(\eta^{-1})} \dd \eta
    -\gamma \frac{1}{\xi_0 m(\xi_0^{-1})}(\xi_0-\delta) + \frac{\sigma \kappa }{\delta m(\delta^{-1})} \xi,$$
hence,
\begin{equation}\label{omeges1}
  \partial_{\xi_0}\omega(\xi,\xi_0)  \leq \gamma \frac{1}{\xi_0 m(\xi_0^{-1})}, \quad\textrm{and}\quad
  \partial_\xi \omega(\xi,\xi_0)  = \, \sigma\kappa  \frac{1}{\delta m(\delta^{-1})} ,
  \end{equation}
and
\begin{eqnarray}\label{Mxi0}
  \omega(\xi,\xi_0)\geq \omega(0+,\xi_0)&=& (1-\sigma)\kappa \frac{1}{m(\delta^{-1})} + \gamma \int_\delta^{\xi_0}\frac{1}{\eta m(\eta^{-1})} \dd \eta
    -\gamma \frac{1}{\xi_0 m(\xi_0^{-1})}(\xi_0-\delta) \nonumber \\
   &\geq&  (1-\sigma)\kappa \frac{1}{m(\delta^{-1})} +\gamma \frac{1}{\xi_0^\alpha m(\xi_0^{-1})} \int_\delta^{\xi_0} \frac{1}{\eta^{1-\alpha}}\dd\eta - \gamma \frac{1}{\xi_0m(\xi_0^{-1})} (\xi_0-\delta) \nonumber \\
  &=&  (1-\sigma)\frac{\kappa}{ m(\delta^{-1})} +\frac{\gamma}{\alpha} \frac{1}{\xi_0^\alpha m(\xi_0^{-1})} \left( \xi_0^\alpha -\delta^\alpha\right) -\gamma  \frac{1}{\xi_0m(\xi_0^{-1})}
  \left(\xi_0 -\delta \right),\nonumber \\
  & \equiv& M_{\xi_0,\delta}.
\end{eqnarray}
We also have
\begin{equation}\label{omegdel}
  \omega(\xi,\xi_0)-\omega(0+,\xi_0)\leq \omega(\delta,\xi_0)-\omega(0+,\xi_0)=\sigma\kappa\frac{1}{ m(\delta^{-1})}.
\end{equation}
Therefore, by \eqref{xi0} and \eqref{omeges1} we obtain
\begin{equation}\label{xi0-est0}
  -\partial_{\xi_0}\omega(\xi,\xi_0) \xi'_0(t)
  \leq \rho  \gamma \frac{1}{ m(\xi_0^{-1}) \xi_0^\beta}.
\end{equation}
Using \eqref{moc2}, \eqref{eq:dom}-\eqref{eq:fact} and \eqref{m-fac3},
by integrating by parts, one gets that, for all $\gamma\leq \sigma \kappa$,
\begin{align}\label{omegint}
  &\int_\xi^\infty\frac{\omega(\eta,\xi_0)m(\eta^{-1})}{\eta^{1+\beta}}\dd\eta
  \leq \frac{\omega(\xi,\xi_0)m(\xi^{-1})}{\beta\xi^\beta } + \int_\xi^\infty\frac{\partial_\eta \omega(\eta,\xi_0) m(\eta^{-1})}{ \beta \eta^\beta }\dd\eta \nonumber\\
  =&\frac{\omega(\xi,\xi_0)m(\xi^{-1})}{\beta\xi^\beta } + \frac{\sigma \kappa }{\beta \delta m(\delta^{-1})} \int_\xi^\delta \frac{m(\eta^{-1})}{\eta^\beta }\dd\eta
  + \frac{\gamma}{\beta \xi_0 m(\xi_0^{-1})}\int_\delta^{\xi_0} \frac{ m(\eta^{-1})}{\eta^\beta} \dd\eta
  + \frac{\gamma}{\beta} \int_{\xi_0}^\infty \frac{1}{\eta^{1+\beta}} \dd\eta \nonumber \\
  \leq & \frac{\omega(\xi,\xi_0) m(\xi^{-1})}{\beta\xi^\beta } + \frac{\sigma\kappa }{\beta\delta^{1-\alpha}}\int_\xi^\delta\frac{1}{\eta^{\alpha+\beta}}\dd\eta
  +\frac{\gamma }{\beta\xi_0^{1-\alpha} }\int_\delta^{\xi_0}\frac{1}{\eta^{\alpha+\beta}}\dd\eta + \frac{\gamma}{\beta^2 \xi_0^\beta} \\
  = &\frac{\omega(\xi,\xi_0) m(\xi^{-1})}{\beta\xi^\beta } + \nonumber
  \begin{cases}
    \frac{\sigma\kappa }{\beta\delta^{1-\alpha}} \frac{\delta^{1-\alpha-\beta}-\xi^{1-\alpha-\beta}}{1-\alpha-\beta}
    + \frac{\gamma }{\beta\xi_0^{1-\alpha} } \frac{\xi_0^{1-\alpha-\beta} - \delta^{1-\alpha-\beta}}{1-\alpha-\beta}  + \frac{\gamma}{\beta^2 \xi_0^\beta},
    &\quad \textrm{if  }\alpha+\beta<1, \\
     \frac{\sigma\kappa }{\beta\delta^{1-\alpha}} \log \frac{\delta}{\xi}
    + \frac{\gamma }{\beta\xi_0^{1-\alpha} } \log \frac{\xi_0}{\delta}  + \frac{\gamma}{\beta^2 \xi_0^\beta}, &\quad \textrm{if  }\alpha+\beta = 1, \\
    \frac{\sigma\kappa }{\beta\delta^{1-\alpha}} \frac{\xi^{1-\alpha-\beta}-\delta^{1-\alpha-\beta}}{\alpha+\beta-1}
    + \frac{\gamma }{\beta\xi_0^{1-\alpha} } \frac{\delta^{1-\alpha-\beta}-\xi_0^{1-\alpha-\beta} }{\alpha+\beta-1}  + \frac{\gamma}{\beta^2 \xi_0^\beta},
    &\quad \textrm{if  }\alpha+\beta>1,
  \end{cases} \\
  \leq &\frac{\omega(\xi,\xi_0) m(\xi^{-1})}{\beta\xi^\beta } + \nonumber
  \begin{cases}
    \frac{3\sigma\kappa }{\beta} \frac{\delta^{-\beta}}{1-\alpha-\beta},
    &\quad \textrm{if  }\alpha+\beta<1, \\
    \frac{\sigma\kappa }{\beta\delta^{1-\alpha}} \log \frac{\delta}{\xi}
    + \frac{\gamma }{\beta\xi_0^{1-\alpha} } \log \frac{\xi_0}{\delta}  + \frac{\gamma}{\beta^2 \xi_0^\beta}, &\quad \textrm{if  }\alpha+\beta = 1, \\
    \frac{\sigma\kappa }{\beta\delta^{1-\alpha}} \frac{\xi^{1-\alpha-\beta}}{\alpha+\beta-1}
    + \frac{\gamma}{\beta^2 \xi_0^\beta},
    &\quad \textrm{if  }\alpha+\beta>1.
  \end{cases}
\end{align}
Then, applying \eqref{Omega3}, \eqref{omeges1}, \eqref{omegdel} and \eqref{omegint}, we obtain, by choosing $\gamma,\kappa$ such that $\gamma\leq \sigma\kappa$ and $\kappa<\frac{1}{2 C_2\sigma} $, the following inequality
\begin{align}\label{omeg-est0}
  \Omega(\xi,t)\partial_\xi\omega(\xi,\xi_0)
  \leq  \nonumber&
  -C_2 \sigma \kappa D(\xi,t) + 2C_2 \sigma \kappa\frac{ \omega (\xi,\xi_0)\xi^{1-\beta} m(\xi^{-1})}{\beta \delta m(\delta^{-1})}  + \nonumber\\
  & \, +
  \begin{cases}
    \frac{3C_2\sigma^2\kappa^2 }{\beta (1-\alpha-\beta)} \frac{\xi \delta^{-\beta}}{\delta m(\delta^{-1})},
    &\quad \textrm{if  }\alpha+\beta<1, \nonumber\\
    C_2\left(\frac{\sigma\kappa }{\beta\delta^{1-\alpha}} \xi \log \frac{\delta}{\xi}
    + \frac{\gamma }{\beta\xi_0^{1-\alpha} } \xi\log \frac{\xi_0}{\delta}  + \frac{\gamma\xi}{\beta^2 \xi_0^\beta}\right)\frac{\sigma \kappa}{\delta m(\delta^{-1})},
    &\quad \textrm{if  }\alpha+\beta = 1, \nonumber\\
    C_2\left(\frac{\sigma\kappa }{\beta\delta^{1-\alpha}} \frac{\xi^{2-\alpha-\beta}}{\alpha+\beta-1}
    + \frac{\gamma \xi}{\beta^2 \xi_0^\beta} \right) \frac{\sigma \kappa}{\delta m(\delta^{-1})},
    &\quad \textrm{if  }\alpha+\beta>1,\nonumber
  \end{cases} \\
  \leq  &
  - \frac{1}{2} D(\xi,t) + \frac{2C_2 \sigma \kappa }{\beta} \frac{\omega (0+,\xi_0) }{\xi^\beta} + \frac{2 C_2 \sigma^2 \kappa^2}{\beta m(\delta^{-1})} \frac{\xi^{1-\beta-\alpha}}{\delta^{1-\alpha}} + \nonumber\\
  & \, +
  \begin{cases}
    \frac{3}{ 1-\alpha-\beta} \frac{C_2 \sigma^2\kappa^2}{\beta m(\delta^{-1})}\delta^{-\beta},
    &\quad \textrm{if  }\alpha+\beta<1, \nonumber\\
    \left(\frac{2C_0 }{\beta }
    + \frac{C_\alpha' \gamma  }{\sigma \kappa }   \right)
    \frac{C_2 \sigma^2 \kappa^2}{\beta m(\delta^{-1})}\delta^{-(1-\alpha)},
    &\quad \textrm{if  }\alpha+\beta = 1,\nonumber \\
    \left(\frac{1 }{\alpha+\beta-1} \frac{\xi^{2-\alpha}}{\delta^{2-\alpha}}
    + \frac{\gamma}{\beta \sigma\kappa } \frac{\xi^{1+\beta}}{\delta\xi_0^\beta} \right) \frac{C_2\sigma^2 \kappa^2}{\beta m(\delta^{-1})}\xi^{-\beta},
    &\quad \textrm{if  }\alpha+\beta>1 ,\nonumber
  \end{cases}  \\
  \leq & - \frac{1}{2} D(\xi,t) + \frac{2C_2 \sigma \kappa }{\beta} \frac{\omega (0+,\xi_0) }{\xi^\beta}
  + \frac{ C_2 \sigma^2 \kappa^2 B_{\alpha,\beta}}{\beta m(\delta^{-1})} \frac{1}{ \xi^\beta},
\end{align}
where in the case $\alpha+\beta=1$ we also used that $\frac{\xi}{\delta}\left( \log\frac{\delta}{\xi}\right)\leq C_0$ and
$\frac{\delta^{1-\alpha}}{\xi_0^{1-\alpha}}\log \frac{\xi_0}{\delta}\leq C_\alpha'$, and
\begin{equation}\label{Balpbet}
B_{\alpha,\beta}\equiv
\begin{cases}
  \frac{5}{1-\alpha-\beta}, \quad & \textrm{if  }\alpha+\beta<1, \\
  \frac{4C_0}{\beta} + C_\alpha',\quad & \textrm{if  } \alpha+\beta=1, \\
  \frac{4}{\alpha+\beta-1},\quad & \textrm{if  }\alpha+\beta>1.
\end{cases}
\end{equation}

For the contribution from the diffusion term, using the concavity of the function $\omega(\eta,\xi_0)-\omega(0+,\xi_0)$, we obtain that
\begin{equation}\label{D2xit00}
\begin{split}
  D(\xi,t)
  \leq -2 C_1 \omega(0+,\xi_0)\int_{\frac{\xi}{2}}^\infty \frac{1}{\eta^{1+\beta}}\dd \eta
  \leq -\frac{2C_1}{\beta}  \frac{\omega(0+,\xi_0)}{\xi^\beta},
\end{split}
\end{equation}
and thanks to \eqref{Mxi0}, we get
\begin{equation}\label{D2xit}
\begin{split}
  D(\xi,t) \leq  -\frac{2C_1}{\beta} \frac{M_{\xi_0,\delta}}{\xi^\beta}.
\end{split}
\end{equation}
If $\xi_0 \geq N \delta$ with $N\in\N$ which is chosen such that
$\frac{1}{\alpha}\left( \xi_0^\alpha  -\delta^\alpha \right)\geq \frac{1-(1/N)^\alpha }{\alpha}\xi_0^\alpha \geq \frac{2}{1+\alpha}\xi_0^\alpha$
which means that $1-(1/N)^\alpha \geq \frac{2\alpha}{1+\alpha}$, that is,
$N\geq \left(\frac{1+\alpha}{1-\alpha}\right)^{\frac{1}{\alpha}}$, thus we may choose
\begin{equation}\label{Ncond}
  N \equiv \left[\left(\frac{1+\alpha}{1-\alpha}\right)^{\frac{1}{\alpha}}\right]+1.
\end{equation}
Thus for such an $N$, we have in the case $\xi_0\geq N\delta$ that
\begin{equation}\label{Mxi0-est1}
\begin{split}
  M_{\xi_0,\delta}  \geq (1-\sigma)\frac{\kappa}{ m(\delta^{-1})}  + \left(\frac{2}{1+\alpha} -1 \right) \gamma\, \frac{1}{m(\xi_0^{-1})}
  \geq (1-\sigma)\frac{\kappa}{ m(\delta^{-1})}  + \frac{1-\alpha}{1+\alpha}\, \frac{\gamma}{m(\xi_0^{-1})} .
\end{split}
\end{equation}
Hence, inequality \eqref{D2xit} becomes
\begin{equation}\label{D2xit-es}
\begin{split}
  D(\xi,t)\leq - \frac{2C_1(1-\sigma)\kappa}{\beta} \frac{1}{ m(\delta^{-1}) \xi^\beta}  - \frac{C_1(1-\alpha)\gamma}{\beta}\, \frac{1}{m(\xi_0^{-1}) \xi^\beta}.
\end{split}
\end{equation}
Therefore, for $\xi_0\geq N \delta$, we choose $\kappa$ such that $\kappa\leq \frac{C_1}{4C_2\sigma}$ so that inequality \eqref{D2xit00} gives
\begin{equation}\label{diff-cont}
  \frac{2C_2 \sigma \kappa }{\beta} \frac{\omega (0+,\xi_0) }{\xi^\beta} \leq \frac{C_1}{2\beta}  \frac{\omega(0+,\xi_0)}{\xi^\beta}\leq -\frac{1}{4}D(\xi,t),
\end{equation}
and by collecting \eqref{xi0-est0}, \eqref{omeg-est0} and \eqref{D2xit-es}, we deduce that
\begin{equation*}
\begin{split}
  \textrm{L.H.S. of \eqref{Targ3}}
  \leq  \,
  \left(  \frac{ C_2 \sigma \kappa B_{\alpha,\beta}}{\beta} - \frac{C_1(1-\sigma)}{2\beta} \right) \frac{\kappa }{ m(\delta^{-1}) \xi^\beta}
  + \left( \rho  - \frac{C_1 (1-\alpha)}{4\beta}\right) \frac{\gamma }{m(\xi_0^{-1}) \xi^\beta} < 0,
\end{split}
\end{equation*}
where the last inequality is verified as long as $\rho$, $\kappa$, $\gamma$ satisfy
\begin{equation}\label{rkg-cd1}
  \rho < \frac{C_1 (1-\alpha)}{4\beta}, \quad
  \kappa< \min \set{ \frac{C_1}{4C_2\sigma}, \frac{C_1 (1-\sigma) }{2C_2 \sigma^2  B_{\alpha,\beta}}}, \quad
  \gamma < \sigma\kappa.
\end{equation}
Conversely, if $\xi_0\leq N\delta$ with $N$ satisfying \eqref{Ncond}, using the following fact that
\begin{equation}\label{mfact2}
  \frac{1}{m(\xi_0^{-1})}\leq \frac{1}{m\left((N\delta)^{-1}\right)} \leq \frac{N^\alpha}{m(\delta^{-1})} \leq \frac{4}{1-\alpha} \frac{1}{ m(\delta^{-1})},
\end{equation}
and using \eqref{diff-cont} once again, we obtain that the positive contribution is, thanks to \eqref{xi0-est0} and \eqref{omeg-est0}, given by
\begin{equation*}
\begin{split}
  -\partial_{\xi_0}\omega(\xi,\xi_0)\xi'_0(t) + \Omega(\xi,t)\partial_\xi \omega(\xi,\xi_0)
  \leq\, - \frac{3}{4} D(\xi,t) + \frac{\kappa}{m(\delta^{-1}) \xi^\beta}
  \left( \frac{4\rho}{1-\alpha} \frac{\gamma}{\kappa} + \frac{ C_2 \sigma^2 \kappa B_{\alpha,\beta}}{\beta}  \right).
\end{split}
\end{equation*}
For the negative contribution coming from the dissipation term, via \eqref{Mxi0}, \eqref{D2xit} and \eqref{mfact2}, we easily get  that for every $\gamma\leq \frac{(1-\sigma)(1-\alpha)}{8}\kappa$,
\begin{align}\label{D2xit-es2}
  D(\xi,t) & \leq -\frac{2C_1}{\beta} \frac{1}{\xi^\beta} \left( (1-\sigma)\kappa\, \frac{1}{m(\delta^{-1})} -\gamma  \frac{1}{m(\xi_0^{-1})} \right) \nonumber \\
  & \leq  -\frac{2C_1}{\beta} \left((1-\sigma)\kappa- \frac{4\gamma}{1-\alpha} \right)\, \frac{1}{m(\delta^{-1}) \xi^\beta}
  \leq -\frac{C_1 \left( 1-\sigma\right) \kappa}{\beta}  \frac{1}{m(\delta^{-1}) \xi^\beta} .
\end{align}
Hence for $\xi_0\leq N\delta$, we  finally have that
\begin{equation*}
\begin{split}
  \textrm{L.H.S. of \eqref{Targ3}}\,\leq &\,\frac{\kappa}{m(\delta^{-1}) \xi^\beta}\,
  \left( \frac{\rho(1-\sigma)}{2} + \frac{ C_2 \sigma^2 \kappa B_{\alpha,\beta}}{\beta} - \frac{C_1(1-\sigma)}{4\beta}\right) < 0,
\end{split}
\end{equation*}
where the last inequality is verified if
\begin{equation}\label{rkg-cd2}
\begin{split}
  \rho < \frac{C_1 }{4\beta},\quad  \kappa< \min\set{\frac{C_1 (1-\sigma)}{8C_2\sigma^2 B_{\alpha,\beta}},\frac{1}{4C_2\sigma}}, \quad
  \gamma <\min\set{\frac{(1-\sigma)(1-\alpha)}{8}\kappa,\sigma \kappa}.
\end{split}
\end{equation}

\textbf{Case 2: $\xi_0>\delta$, $\delta<\xi\leq \xi_0$.} \\

In that case, we have
$$\omega(\xi,\xi_0) = \kappa \frac{1}{m(\delta^{-1})} + \gamma \int_\delta^{\xi_0} \frac{1}{\eta m(\eta^{-1})}\dd \eta
-\gamma \frac{1}{m(\xi_0^{-1})} + \gamma \frac{1}{\xi_0 m(\xi_0^{-1})} \xi,$$
 hence, we infer that, $ $
\begin{equation}\label{est1}
\begin{split}
  \partial_\xi \omega(\xi,\xi_0) = \gamma \frac{1}{\xi_0 m(\xi_0^{-1})},\quad\textrm{and} \quad \partial_{\xi_0}\omega(\xi,\xi_0)
  \leq \frac{\gamma}{\xi_0m(\xi_0^{-1})}+\frac{\alpha\gamma\xi}{\xi_0^{2}m(\xi_0^{-1})} \leq \frac{2\gamma}{\xi_0 m(\xi_0^{-1})},
\end{split}
\end{equation}
and (recalling $M_{\xi_0,\delta}$ defined in \eqref{Mxi0})
\begin{equation}
\begin{split}
  \omega(\xi,\xi_0)\geq \omega(\delta,\xi_0)
  & \geq \kappa \frac{1}{m(\delta^{-1})} + \gamma \int_\delta^{\xi_0} \frac{1}{\eta m(\eta^{-1})}\dd \eta -\gamma \frac{1}{\xi_0 m(\xi_0^{-1})} (\xi_0-\delta) \\
  & \geq \frac{\kappa}{ m(\delta^{-1})} +\frac{\gamma}{\alpha} \frac{1}{\xi_0^\alpha m(\xi_0^{-1})} \left( \xi_0^\alpha -\delta^\alpha\right) -\gamma  \frac{1}{\xi_0m(\xi_0^{-1})}
  \left(\xi_0 -\delta \right) \\
  & = M_{\xi_0,\delta} + \frac{\sigma \kappa}{ m(\delta^{-1})},
\end{split}
\end{equation}
and we also have that
\begin{equation}\label{omeg-xi0}
  \omega(\xi,\xi_0)-\omega(0+,\xi_0)\leq \omega(\xi_0,\xi_0)-\omega(0+,\xi_0)\leq \frac{\gamma}{ m(\xi_0^{-1})} + \frac{ \sigma \kappa}{ m(\delta^{-1})} \leq \frac{2\kappa}{m(\xi_0^{-1})}.
\end{equation}
Therefore, thanks to \eqref{xi0} and \eqref{est1}, we get
\begin{equation}\label{xi0-est2}
\begin{split}
  -\partial_{\xi_0}\omega(\xi,\xi_0)  \xi'_0(t)  \leq \frac{2\rho\gamma}{\xi_0m(\xi_0^{-1})} \xi_0^{1-\beta} \leq \frac{2 \rho\gamma}{\xi^\beta m(\xi_0^{-1})}.
\end{split}
\end{equation}
We moreover obtain that
\begin{align}\label{omeg-es2}
  \int_\xi^\infty \frac{\omega(\eta,\xi_0) m(\eta^{-1})}{\eta^{1+\beta}} \dd\eta & \leq \frac{\omega(\xi,\xi_0) m(\xi^{-1})}{\beta\xi^\beta }
  + \int_\xi^\infty\frac{\partial_\eta\omega(\eta,\xi_0) m(\eta^{-1})}{\beta\eta^{\beta}}d\eta \nonumber\\
  & = \frac{\omega(\xi,\xi_0) m(\xi^{-1})}{\beta \xi^\beta} + \frac{\gamma}{\beta \xi_0 m(\xi_0^{-1})} \int_\xi^{\xi_0} \frac{ m(\eta^{-1})}{\eta^\beta} \dd\eta
  + \int_{\xi_0}^\infty \frac{\gamma}{\beta \eta^{1+\beta}} \dd\eta\nonumber \\
  & \leq \frac{\omega(\xi,\xi_0) m(\xi^{-1})}{\beta \xi^\beta} + \frac{\gamma}{\beta\xi_0^{1-\alpha}} \int_\xi^{\xi_0} \frac{1}{\eta^{\alpha+\beta}}\dd \eta + \frac{\gamma}{\beta^2 \xi_0^\beta} \\
  & \leq \frac{\omega(\xi,\xi_0) m(\xi^{-1})}{\beta \xi^\beta} +
  \begin{cases}
  \frac{\gamma}{\beta(1-\alpha-\beta)\xi_0^\beta} + \frac{\gamma}{\beta^2\xi_0^\beta}, &\quad \textrm{if  }  \alpha+\beta<1 \nonumber\\
  \frac{\gamma}{\beta\xi_0^\beta}\log\frac{\xi_0}{\xi} + \frac{\gamma}{\beta^2\xi_0^\beta}, &\quad \textrm{if  } \alpha+\beta=1 \nonumber\\
  \frac{\gamma}{\beta(\beta+\alpha-1)\xi^\beta} + \frac{\gamma}{\beta^2\xi_0^\beta}, &\quad \textrm{if  } \alpha+\beta>1 \nonumber.
  \end{cases}
\end{align}
Thus, by combining \eqref{Omega3} with \eqref{omeg-es2}, and using $\partial_\xi \omega(\xi,\xi_0) = \gamma \frac{1}{\xi_0 m(\xi_0^{-1})}$ and $\xi m(\xi^{-1})\leq \xi_0 m(\xi_0^{-1}),$
we obtain the following control
\begin{align}
  \Omega(\xi,t) \partial_\xi\omega(\xi,\xi_0) \leq &
  -C_2 \gamma D(\xi,t) + \frac{2C_2\gamma \omega(\xi,\xi_0)}{\beta \xi^\beta}  +
  \begin{cases}
  \frac{C_2\gamma^2}{\beta(1-\alpha-\beta)\xi_0^\beta m(\xi_0^{-1})} + \frac{C_2\gamma^2}{\beta^2\xi_0^\beta m(\xi_0^{-1})}, &\textrm{if  }\alpha+\beta<1 \nonumber\\
  \frac{C_2\gamma^2}{\beta\xi_0^\beta m(\xi_0^{-1})} \frac{\xi}{\xi_0} \log\frac{\xi_0}{\xi} + \frac{C_2\gamma^2}{\beta^2\xi_0^\beta m(\xi_0^{-1})}, &\textrm{if  }\alpha+\beta=1 \nonumber\\
  \frac{C_2\gamma^2}{\beta(\alpha+\beta-1)\xi^\beta m(\xi_0^{-1})} + \frac{C_2\gamma^2}{\beta^2\xi_0^\beta m(\xi_0^{-1})}, &\textrm{if  } \alpha+\beta>1,
  \end{cases}
\end{align}
then, using  \eqref{omeg-xi0} and the fact that $\frac{\xi}{\xi_0}\log\frac{\xi_0}{\xi}\leq C_0$ we observe that if we set
\begin{equation}\label{overB}
\overline{B}_{\alpha,\beta} =
\begin{cases}
  \frac{3}{1-\alpha-\beta} +\frac{3}{\beta},&\quad\textrm{if  }\alpha+\beta<1,\\
  C_0 +\frac{5}{\beta},&\quad\textrm{if  } \alpha+\beta=1,\\
  \frac{6}{\alpha+\beta-1},&\quad \textrm{if  }\alpha+\beta>1,
\end{cases}
\end{equation}
then we get that if $\gamma$ is chosen such that  $\gamma<\frac{1}{4C_2}$, we find
\begin{align}\label{Ome-es3}
  & \Omega(\xi,t) \partial_\xi\omega(\xi,\xi_0) \nonumber\\
   \leq & -\frac{1}{4}D(\xi,t) +\frac{2C_2 \gamma \omega(0+,\xi_0)}{\beta \xi^\beta} + \frac{4C_2\kappa \gamma }{\beta \xi^\beta m(\xi_0^{-1})} +
  \begin{cases}
  \left(\frac{1}{1-\alpha-\beta} +\frac{1}{\beta}\right)\frac{C_2\gamma^2}{\beta\xi_0^\beta m(\xi_0^{-1})}, & \;\textrm{if  } \alpha+\beta<1 \nonumber\\
  \left(C_0 +\frac{1}{\beta}\right) \frac{C_2 \gamma^2}{\beta \xi_0^\beta m(\xi_0^{-1})}, & \; \textrm{if  }\alpha+\beta=1  \nonumber\\
  \left(\frac{1}{\alpha+\beta-1}+ \frac{1}{\beta}\right)\frac{  C_2\gamma^2}{\beta\xi^\beta m(\xi_0^{-1})}, &\; \textrm{if  }\alpha+\beta>1  \nonumber
  \end{cases}\\
  \leq & -\frac{1}{4}D(\xi,t) +\frac{2C_2 \gamma\omega(0+,\xi_0)}{\beta \xi^\beta} + \frac{C_2 \overline{B}_{\alpha,\beta} \kappa\gamma}{\beta} \frac{1}{\xi^\beta m(\xi_0^{-1})}.
\end{align}

For the contribution from the diffusion term, we still have \eqref{D2xit00} and \eqref{D2xit}.
Now, as we did in Case 1 above, we split the study into $\xi_0\geq N\delta$ and $\xi_0\leq N\delta$ where $N$ has been defined in \eqref{Ncond}. If $\xi_0\geq N\delta$, then,
by using \eqref{D2xit-es} and by choosing $\gamma$ such that $\gamma<\frac{C_1}{4C_2}$, we find that
\begin{equation*}
\begin{split}
  \textrm{L.H.S. of \eqref{Targ3}}\;\leq\;
  &  \frac{\gamma}{\beta \xi^\beta m(\xi_0^{-1})} \left( 2\rho \beta
  +  C_2 \overline{B}_{\alpha,\beta} \kappa  - \frac{C_1(1-\alpha )}{2} \right)  < 0,
\end{split}
\end{equation*}
where the last inequality holds true as long as
\begin{equation}\label{rkg-cd3}
  \rho<\frac{C_1(1-\alpha )}{8\beta},\quad \kappa < \frac{C_1(1-\alpha)}{4C_2 \overline{B}_{\alpha,\beta}},\quad
  \gamma<\min\set{\frac{1}{4C_2},\frac{C_1}{4C_2}, \sigma \kappa}.
\end{equation}
Otherwise, if $\xi_0\leq N\delta$, and we have, using \eqref{mfact2} and choosing $\gamma$ such that $\gamma<\frac{C_1}{4C_2}$,
 that the positive contribution treated by \eqref{xi0-est2} and \eqref{Ome-es3} can further be bounded as 
\begin{equation*}
\begin{split}
  -\partial_{\xi_0}\omega(\xi,\xi_0)\xi'_0(t) + \Omega(\xi,t)\partial_\xi \omega(\xi,\xi_0)
  \leq - \frac{1}{2} D(\xi,t) + \frac{4\gamma}{(1-\alpha)\beta} \frac{1}{\xi^\beta m(\delta^{-1})}\,
  \left( 2 \beta \rho + C_2 \overline{B}_{\alpha,\beta} \,\kappa \right).
\end{split}
\end{equation*}
For the negative contribution from the diffusion term, by arguing as \eqref{D2xit-es2} we obtain that for $\gamma$ choosen such that $\gamma \leq \frac{(1-\sigma)(1-\alpha)}{8}\kappa$, one has
\begin{equation*}
\begin{split}
  D(\xi,t) \leq -\frac{2C_1}{\beta} \left((1-\sigma)\kappa- \frac{4\gamma}{1-\alpha} \right)\, \frac{1}{m(\delta^{-1}) \xi^\beta}
  \leq  - 2C_1 \frac{4\,\gamma}{(1-\alpha)\beta}  \frac{1}{\xi^\beta m(\delta^{-1})} .
\end{split}
\end{equation*}
Hence for $\xi_0\leq N\delta$ with $N$ given by \eqref{Ncond}, we deduce
\begin{equation*}
\begin{split}
  \textrm{L.H.S. of \eqref{Targ3}}\,\leq \, \frac{4\,\gamma}{(1-\alpha)\beta}  \frac{1}{\xi^\beta m(\delta^{-1})}
  \left(  2 \rho \beta  + C_2 \overline{B}_{\alpha,\beta}\, \kappa - C_1 \right)  < 0,
\end{split}
\end{equation*}
where the last inequality holds true if
\begin{equation}\label{rkg-cd4}
\begin{split}
  \rho < \frac{C_1 }{4\beta},\quad \kappa < \frac{C_1}{2C_2\overline{B}_{\alpha,\beta}}, \quad\gamma \leq \min\set{\frac{(1-\sigma)(1-\alpha)}{8}\kappa,\frac{1}{4C_2},\frac{C_1}{4C_2}}.
\end{split}
\end{equation}

\textbf{Case 3: $\xi_0>\delta$, $\xi_0<\xi\leq A_0$.} \\

In this case, $$\omega(\xi,\xi_0) =\kappa \frac{1}{m(\delta^{-1})} + \gamma \int_\delta^\xi \frac{1}{\eta m(\eta^{-1})}\dd \eta.$$
We see that $\partial_{\xi_0}\omega(\xi,\xi_0)=0$, $\partial_\xi\omega(\xi,\xi_0)=\gamma \frac{1}{ \xi m(\xi^{-1})}$, and following the same arguments as we did to obtain \eqref{est-omeg}, we find, using \eqref{claim1} and $\omega(\xi,\xi_0)= \omega(\xi)$ (with $\omega(\xi)$ defined in \eqref{moc1}) that
\begin{equation*}
\begin{split}
  \int_\xi^\infty \frac{\omega(\eta,\xi_0) m(\eta^{-1})}{\eta^{1+\beta}} \dd\eta  \leq \frac{\omega(\xi,\xi_0) m(\xi^{-1})}{\beta\xi^\beta }+\frac{\gamma}{\beta^2 \xi^\beta }
  \leq \frac{2 \overline{C} \omega(\xi,\xi_0) m(\xi^{-1})}{\beta^2 \xi^\beta}.
\end{split}
\end{equation*}
where $\overline{C}=\frac{1+C_\alpha }{2C_\alpha }$ and $C_\alpha=\frac{2^\alpha -1}{\alpha}\in(0,1)$.
Thus, thanks to \eqref{Omega3}, if one chooses $\gamma$ so that $\gamma < \frac{1}{2C_2}$, then, one gets
\begin{equation*}
  \Omega(\xi,t)\partial_\xi\omega(\xi,\xi_0) \leq - \frac{1}{2} D(\xi,t) + \frac{ 3C_2 \overline{C }\gamma}{\beta^2} \frac{\omega(\xi,\xi_0)}{\xi^\beta}.
\end{equation*}

\noindent As far as the contribution from the diffusion term is concerned, since $$\omega(2\eta+\xi,\xi_0)-\omega(2\eta-\xi,\xi_0)\leq \omega(2\xi,\xi_0)< 2\omega(\xi,\xi_0),$$
then, by following the same idea as \eqref{key-est1}, we obtain
\begin{equation*}
  D(x,t) \leq -C_1 \frac{(1-C_\alpha) \omega(\xi,\xi_0)}{2}  \int_{\frac{\xi}{2}}^\infty\frac{1}{\eta^{1+\beta}} \dd\eta
  \leq -\frac{C_1(1-C_\alpha )}{2\beta} \frac{\omega(\xi,\xi_0)}{\xi^\beta }.
\end{equation*}
Hence,
\begin{equation*}
  \Omega(\xi,t)\partial_\xi\omega(\xi,\xi_0) + D(\xi,t) \leq \frac{\omega(\xi,\xi_0)}{\beta\xi^\beta }
  \left(C_2\gamma \frac{3\overline{C}}{\beta} - \frac{C_1 (1-C_\alpha)}{4}  \right)<0,
\end{equation*}
provided that the constants $\kappa, \gamma$ are chosen so that
\begin{equation}\label{rkg-cd5}
\begin{split}
   0<\kappa<1,\quad 0<\gamma < \min\left\{ \frac{1}{2C_2},\frac{(1-C_\alpha)^\alpha}{2} \kappa,\frac{C_1 \beta C_\alpha (1-C_\alpha)}{12 C_2}\right\}.
\end{split}
\end{equation}

\textbf{Case 4: $0<\xi_0\leq \delta$, $0<\xi\leq\xi_0$.} \\

In this case,
$$\omega(\xi,\xi_0)=(1-\sigma)\kappa \frac{1}{m(\delta^{-1})} \delta^{-\sigma} \xi_0^\sigma  + \sigma \kappa\frac{1}{m(\delta^{-1})}\delta^{-\sigma} \xi_0^{\sigma-1} \xi,$$
and thus
\begin{equation*}
\begin{split}
  \partial_{\xi_0}\omega(\xi,\xi_0) =\sigma(1-\sigma)\kappa \frac{1}{m(\delta^{-1})} \delta^{-\sigma} \xi_0^{\sigma-1} \left(1 -\frac{\xi}{\xi_0} \right), \quad \textrm{and}\quad
  \partial_\xi\omega(\xi,\xi_0)  = \sigma \kappa \frac{1}{m(\delta^{-1})} \delta^{-\sigma} \xi_0^{\sigma-1},
\end{split}
\end{equation*}
and
\begin{equation}\label{eq:fact2}
\begin{split}
  \omega(\xi,\xi_0)  \geq \omega(0+,\xi_0)\geq (1-\sigma)\kappa \frac{1}{m(\delta^{-1})} \delta^{-\sigma} \xi_0^\sigma , \;\; \textrm{and}\;\;
  \omega(\xi,\xi_0)  \leq \omega(\delta,\xi_0)\leq \kappa \frac{1}{m(\delta^{-1})} \delta^{-\sigma} \xi_0^\sigma .
\end{split}
\end{equation}
We have
\begin{equation}\label{omegxi0-3}
\begin{split}
  -\partial_{\xi_0}\omega(\xi,\xi_0)\dot\xi_0(t) \leq \rho \sigma(1-\sigma)\kappa \frac{1}{m(\delta^{-1})} \delta^{-\sigma} \xi_0^{\sigma-\beta}(1 -\frac{\xi}{\xi_0})
  \leq \frac{C_2 \rho\sigma \omega(\xi,\xi_0)}{\xi^\beta}.
\end{split}
\end{equation}
A straightforward use of \eqref{m-fac3} gives both
$$\xi m(\xi^{-1})\leq \xi_0 m(\xi_0^{-1}) \ \ {\rm{and}} \ \ m(\xi_0^{-1})\leq \left(\frac{\delta}{\xi_0}\right)^\alpha m(\delta^{-1}),$$
and hence, one finds
\begin{equation}
  \xi m(\xi^{-1}) \partial_\xi \omega(\xi,\xi_0)\leq \sigma\kappa \frac{\xi_0^\sigma m(\xi_0^{-1})}{\delta^\sigma m(\delta^{-1})}
  \leq \sigma\kappa \frac{\xi_0^{\sigma-\alpha}}{\delta^{\sigma-\alpha}}  \leq \sigma\kappa,
\end{equation}
which also leads to
\begin{equation}
  -C_2\xi m(\xi^{-1})D(\xi,t) \partial_\xi \omega(\xi,\xi_0) \leq -C_2\sigma\kappa D(\xi,t).
\end{equation}
By integrating by parts, and using \eqref{eq:dom}-\eqref{eq:fact}, \eqref{m-fac3} along with formula \eqref{moc3}, we see that,
\begin{align*}
  & \int_\xi^\infty \frac{\omega(\eta,\xi_0) m(\eta^{-1})}{\eta^{1+\beta}}\dd\eta
  \leq  \ \frac{\omega(\xi,\xi_0) m(\xi^{-1})}{\beta\xi^\beta} + \int_\xi^\infty \frac{\partial_\eta\omega(\eta,\xi_0) m(\eta^{-1})}{\beta\eta^\beta} \dd\eta \\
  \leq & \ \frac{\omega(\xi,\xi_0) m(\xi^{-1})}{\beta\xi^\beta}
  + \frac{\sigma \kappa \xi_0^{\sigma-1}}{\beta \delta^\sigma m(\delta^{-1})} \int_\xi^{\xi_0} \frac{ m(\eta^{-1})}{\eta^\beta} \dd\eta
  + \frac{\sigma \kappa }{\beta \delta^\sigma m(\delta^{-1})} \int_{\xi_0}^\delta  \frac{ m(\eta^{-1})}{\eta^{1+\beta-\sigma}} \dd\eta
  + \frac{\gamma}{\beta}\int_\delta^\infty \frac{1}{\eta^{1+\beta}} \dd\eta \\
  \leq & \ \frac{\omega(\xi,\xi_0) m(\xi^{-1})}{\beta\xi^\beta}
  + \frac{\sigma \kappa \xi_0^{\sigma-1}}{\beta \delta^{\sigma-\alpha}} \int_\xi^{\xi_0} \frac{1}{\eta^{\alpha+\beta}} \dd \eta
  + \frac{\sigma \kappa }{\beta \delta^{\sigma-\alpha} } \int_{\xi_0}^\delta  \frac{1}{\eta^{1+\beta+\alpha-\sigma}} \dd\eta
  + \frac{\gamma}{\beta^2} \delta^{-\beta}  \\
  \leq & \ \frac{\omega(\xi,\xi_0) m(\xi^{-1})}{\beta \xi^\beta} +
   \begin{cases}
    \frac{\sigma\kappa}{\beta(1-\alpha-\beta)\xi_0^\beta}+\frac{\sigma\kappa}{\beta(\alpha+\beta-\sigma)\xi_0^\beta}+\frac{\gamma}{\beta^2\delta^\beta},
    &\quad \textrm{if   }\alpha+\beta<1, \\
     \frac{\sigma\kappa  }{\beta \xi_0^\beta} \log\frac{\xi_0}{\xi} + \frac{\sigma\kappa}{\beta(1-\sigma)\xi_0^\beta}+\frac{\gamma}{\beta^2\delta^\beta},
    &\quad \textrm{if   }\alpha+\beta = 1, \\
    \frac{\sigma\kappa}{\beta(\alpha+\beta-1)\xi^\beta} + \frac{\sigma\kappa}{\beta(\alpha+\beta-\sigma)\xi_0^\beta}+\frac{\gamma}{\beta^2\delta^\beta},
    &\quad \textrm{if   }\alpha+\beta>1.
  \end{cases}
\end{align*}
By using \eqref{Omega3} together with the fact that $\xi_0 \partial_\xi \omega(\xi,\xi_0) \leq \frac{\sigma }{1-\sigma} \omega(\xi,\xi_0)$, the previous inequality
allows to conclude that, by choosing $\kappa$ and $\gamma$ such that  $\kappa\leq\frac{1}{2C_2\sigma}$ and $\gamma\leq\sigma\kappa$, one has the following inequality
\begin{equation}\label{Omeg-est3}
\begin{split}
  \Omega(\xi,t)\partial_\xi\omega(\xi,\xi_0) \leq & -C_2\sigma\kappa D(\xi,t) + \frac{2C_2 \sigma \kappa}{\beta} \frac{\omega(\xi,\xi_0)}{\xi^\beta} + \\
  & +
  \begin{cases}
    \frac{C_2\sigma\kappa}{\beta\xi^\beta}\left(\frac{1}{1-\alpha-\beta} + \frac{1}{\alpha+\beta-\sigma}+\frac{1}{\beta}\right)\xi_0 \partial_\xi \omega(\xi,\xi_0), &\quad \textrm{if   }\alpha+\beta<1,\\
    \frac{C_2\sigma\kappa}{\beta\xi^\beta}\left(C_0 + \frac{1}{\alpha+\beta-\sigma}+\frac{1}{\beta}\right)\xi_0 \partial_\xi \omega(\xi,\xi_0), &\quad \textrm{if   }\alpha+\beta=1,\\
    \frac{C_2\sigma\kappa}{\beta\xi^\beta}\left(\frac{1}{\alpha+\beta-1}+ \frac{1}{\alpha+\beta-\sigma}+\frac{1}{\beta}\right)\xi_0 \partial_\xi \omega(\xi,\xi_0), &\quad \textrm{if   }\alpha+\beta>1,\\
  \end{cases}
  \\
  \leq &   -\frac{1}{2}D(\xi,t) + \frac{3C_2\sigma\kappa K_{\alpha,\beta,\sigma}}{\beta(1-\sigma)}\frac{\omega(\xi,\xi_0)}{\xi^\beta},
\end{split}
\end{equation}
where we have used $\frac{\xi}{\xi_0}\log\frac{\xi_0}{\xi}\leq C_0$ and have set
\begin{equation}
\begin{split}
  K_{\alpha,\beta,\sigma} \equiv
\begin{cases}
  \frac{1}{1-\alpha-\beta} + \frac{2}{\alpha+\beta-\sigma}, &\quad\textrm{if   }\alpha+\beta<1,\\
  C_0 + \frac{2}{1-\sigma}, &\quad\textrm{if   }\alpha+\beta=1,\\
  \frac{3}{\alpha+\beta-1}, &\quad\textrm{if   }\alpha+\beta>1.\\
\end{cases}
\end{split}
\end{equation}

\noindent For the contribution from the diffusion term, by following the same idea  as the proof of \eqref{D2xit00} and using \eqref{eq:fact2}, we get
\begin{equation}\label{D2xit3}
\begin{split}
  D(\xi,t)  \leq - 2 C_1\omega(0+,\xi_0)\int_{\frac{\xi}{2}}^\infty \frac{1}{\eta^{1+\beta}}\dd\eta
  \leq\frac{-2C_1 (1-\sigma)\omega(\xi,\xi_0)}{\beta \xi^\beta}.
\end{split}
\end{equation}

Then, thanks to \eqref{omegxi0-3}, \eqref{Omeg-est3}, \eqref{D2xit3} along with \eqref{Targ3}, one finds that
\begin{equation}
  \textrm{L.H.S. of \eqref{Targ3}}\,\leq \frac{\omega(\xi,\xi_0)}{\beta \xi^\beta} \left(\beta\sigma \rho + \frac{3C_2\sigma\kappa K_{\alpha,\beta,\sigma}}{1-\sigma} - C_1(1-\sigma)\right)<0,
\end{equation}
where the last inequality is ensured by setting
\begin{equation}\label{rkg-cd6}
\begin{split}
  \rho < \frac{C_1 (1-\sigma)}{2\beta\sigma},\quad \kappa <\min\left\{ \frac{C_1(1-\sigma)^2}{6C_2\sigma K_{\alpha,\beta,\sigma}}, \frac{1}{2C_2\sigma}\right\}, \quad\gamma \leq \sigma \kappa.
\end{split}
\end{equation}

\textbf{Case 5: $0<\xi_0\leq \delta$, $\xi >\xi_0$.} \\

In this case, we may follow what we did in Case 1 and Case 2 in the proof of Lemma \ref{lem:RCpres}, and we omit the details.
However, it is worth saying that the conditions on $\kappa$, $\gamma$, in that case, are given by \eqref{kg-cd1}. \\

Finally, we have proved that \eqref{Targ3} is verified for all $\xi>0$ and $t >0$ for the moduli of continuity $\omega(\xi,\xi_0)$ defined by \eqref{moc2}-\eqref{moc3} and $\xi_0=\xi_0(t)$ given by \eqref{xi0}. Recall that $\rho,\kappa,\gamma$ are constants satisfying all the conditions coming from each cases, namely \eqref{kg-cd1}, \eqref{rkg-cd1}, \eqref{rkg-cd2}, \eqref{rkg-cd3}, \eqref{rkg-cd4}, \eqref{rkg-cd5}, \eqref{rkg-cd6}. This ends the proof of Lemma \ref{lem-mocev}.

\end{proof}

\begin{remark}
By suppressing the dependence on $C_0$, $b_0,b_2$ (which are fixed constants), and recalling that $C_2=\frac{C_0}{\beta}$ (the constant appearing in Lemma \ref{lem-mocdrf}),
we find that the conditions on $\rho, \kappa$ and $\gamma$ can be written as
\begin{equation}\label{rkg-cdsum}
\begin{split}
\begin{cases}
  \rho \leq\frac{1}{C}\min\set{\frac{C_1(1-\alpha)}{\beta},\frac{C_1(1-\sigma)}{\beta\sigma}}, \\
  \kappa\leq \frac{C_1\beta(1-\sigma)^2}{C}\min\set{1-\alpha-\beta,\alpha+\beta-\sigma},   & \quad \textrm{if   }\alpha+\beta<1, \\
  \kappa\leq \frac{C_1\beta(1-\sigma)}{C}\min\set{\frac{1}{C_\alpha'},(1-\sigma)^2},  & \quad \textrm{if   }\alpha+\beta=1, \\
  \kappa\leq \frac{C_1}{C}\beta(1-\sigma)^2(\alpha+\beta-1),  & \quad \textrm{if   }\alpha+\beta>1, \\
  \gamma \leq \frac{1}{C} \min \set{\sigma \kappa,(1-C_\alpha)^\alpha \kappa,(1-\sigma)(1-\alpha)\kappa,C_1(1-C_\alpha)\beta^2},
\end{cases}
\end{split}
\end{equation}
where $C>0$ is fixed constant that depends neither on $\alpha,\sigma,\beta$, nor $C_1$ (the constant appearing in Lemma \ref{lem-mocdiss}) and the other constants are defined by $C_\alpha\equiv\frac{2^\alpha -1}{\alpha}\geq \inf_{\alpha\in (0,1)}\frac{2^\alpha-1}{\alpha}=c_0>0$, and $C_\alpha'\equiv\displaystyle\sup_{x\in [1,+\infty)}\frac{1}{x^{1-\alpha}}\log x<\infty$.
\end{remark}

\subsection{Proof of (\ref{T*}): remark on the time of the eventual regularity}\label{subsec:T*}

The aim of this subsection is to show that for any fixed initial data $\theta_0\in L^2$, $t'>0$ and $\beta\in(0,1)$, the time of eventual regularity $t_1= A_0^\beta/(\beta\rho)$ tends to 0 as $\alpha\rightarrow 0$.

Without loss of generality we may assume $A_0<1$. Recalling that \eqref{eq:Adel-cd} is verified if $$\frac{(1-\alpha)\gamma}{\alpha\, m(1)} (A_0^\alpha -\delta^\alpha) \geq \frac{2 C_\beta}{t'^{1/\beta}} \|\theta_0\|_{L^2},$$ therefore we may choose
\begin{equation}\label{A0del1}
  A_0= \left(\frac{4 C_\beta m(1)}{(1-\alpha)\gamma t'^{1/\beta}}\alpha \|\theta_0\|_{L^2}\right)^{\frac{1}{\alpha}},\quad
  \delta= \left( \frac{ C_\beta m(1)}{(1-\alpha)\gamma t'^{1/\beta}}\alpha \|\theta_0\|_{L^2}\right)^{\frac{1}{\alpha}}.
\end{equation}
Since at the end we are going to consider the limit as $\alpha\rightarrow 0$, we may assume that
$$\alpha<\min\Big\{\frac{1-\beta}{2},\frac{\beta}{2},\frac{1}{4}\Big\}\quad \textrm{ for all   $\beta\in(0,1)$,}$$
and since $\sigma\in(\alpha, \min\{\alpha+\beta,1\})$, we may set $\sigma=\min\{\frac{1}{3},\frac{2\beta}{3}\}$, thus since $C_\alpha=\frac{2^\alpha -1}{\alpha}\approx (\ln 2)^2$ and $ C_\alpha'=\sup_{x\in [1,+\infty)}\frac{1}{x^{1-\alpha}}\log x\approx 1$ for $\alpha$ close to 0, we see that \eqref{rkg-cdsum} reduces to
\begin{equation}\label{rkg-cond}
  0<\rho \leq \frac{C_1}{C\beta}, \quad
  0<\kappa<\frac{C_1}{C}\min\set{\beta(1-\beta),\beta^2},\quad
   0<\gamma< \frac{C_1}{C}\min \set{\beta(1-\beta), \beta^2},
\end{equation}
with $C>0$ and $C_1>0$ some constant depending only on $\beta$.
Hence, by choosing  $\rho,\kappa,\gamma$ satisfying the above conditions, and via \eqref{A0del1}, we obtain that
for all $\beta\in(0,1)$ for all fixed time $t'>0$ and data $\theta_0\in L^2(\R^2)$,
\begin{equation}\label{eq:t1conv}
  t_1 = \frac{1}{\beta\rho} \left(\frac{4 C_\beta m(1)}{(1-\alpha)\gamma t'^{1/\beta}}\alpha \|\theta_0\|_{L^2}\right)^{\frac{\beta}{\alpha}}
  =C(\beta)\big(C(\beta,t') \alpha \|\theta_0\|_{L^2}\big)^{\frac{\beta}{\alpha}}\rightarrow 0,\quad \textrm{as   }\alpha\rightarrow 0.
\end{equation}
This ends the proof of the remark (\ref{T*}) on the time of eventual regularity.\\
\qed
\vspace{-0,6cm}
\subsection{Global regularity of weak solution for $(gSQG)_\beta$ equation \eqref{eq:gSQG} at the logarithmically supercritical case}\label{subsec:grLog}

The aim of this subsection is to prove the first point of Theorem \ref{thm:RegLog}, that is the global regularity of the weak solution for a logarithmically slightly supercritical case.
Unlike the second point of Theorem \ref{thm:RegLog}, we now assume that the multiplier $m$ verifies (A1)-(A4). \\

We use the same method as the proof of the eventual regularity as above. However, in this case, the moduli of continuity $\omega(\xi,\xi_0)$ are constructed from $\omega(\xi)$ defined by \eqref{moc1.2} instead of \eqref{moc1}.
Hence, the moduli of continuity $\omega(\xi,\xi_0)$ used here are slightly different from those given by \eqref{moc2}-\eqref{moc3}. It is defined as follows,
\begin{equation}\label{moc4}
\omega(\xi,\xi_0)=
\begin{cases}
  \textrm{$\omega(\xi,\xi_0)$  given by \eqref{moc2}},\quad & \textrm{if   }\; 0<\xi\leq \frac{1}{2b_1},\xi_0> \delta, \\
  \textrm{$\omega(\xi,\xi_0)$  given by \eqref{moc3}},\quad & \textrm{if   }\; 0<\xi\leq \frac{1}{2b_1},\xi_0\leq \delta, \\
  \omega(\frac{1}{2b_1},\xi_0),\quad & \textrm{if   }\; \xi\geq \frac{1}{2b_1},
\end{cases}
\end{equation}
where $\xi_0=\xi_0(t)$ is defined by \eqref{xi0} which has an explicit expression namely $\xi_0(t)= (A_0^\beta - \rho \beta t)^{\frac{1}{\beta}}$.
Clearly, $\omega(\xi,\xi_0)$ given by \eqref{moc4} are moduli of continuity satisfying the condition (3) of Proposition \ref{prop-GC}.

Without loss of generality, we may assume that $\xi_0(0)=A_0$ is small enough, more precisely we shall assume that $A_0\leq \min\{\frac{1}{2b_1},\frac{1}{b_3}\}$ where $b_1\geq 1$ is  the constant appearing in (A3) and $b_3\geq 1$ the constant verifying \eqref{mcd1},  that is
$1/b_3 \leq m(r) \leq b_3 (\log r)^\mu$ for all $r \geq b_3$.
Then, using \eqref{mcd1} we have
\begin{align}\label{omeg0A}
  \omega(0+,A_0) & =(1-\sigma)\kappa \frac{1}{m(\delta^{-1})} + \gamma \int_\delta^{A_0}\frac{1}{\eta m(\eta^{-1})} \dd \eta
    -\gamma \frac{1}{A_0 m(A_0^{-1})}(A_0-\delta) \nonumber\\
  & > \frac{\gamma}{b_3} \int_\delta^{A_0} \frac{1}{\eta(\log \eta^{-1} )^\mu}\dd \eta - \frac{\gamma}{m(1)} \nonumber \\
  & \geq \frac{\gamma}{b_3} \int_{\frac{1}{A_0}}^{\frac{1}{\delta}} \frac{1}{\eta(\log \eta )^\mu}\dd \eta - \frac{\gamma}{m(1)} \\
  & \geq \nonumber
  \begin{cases}
    \frac{\gamma}{b_3(1-\mu)}\left( \left( \log\frac{1}{\delta}\right)^{1-\mu}- \left( \log \frac{1}{A_0}\right)^{1-\mu} \right)- \frac{\gamma}{m(1)} , & \quad \textrm{if   }\mu\in[0,1), \\
    \frac{\gamma}{b_3} \left( \log\log \frac{1}{\delta} -\log\log \frac{1}{A_0}\right) - \frac{\gamma}{m(1)}, & \quad \textrm{if   }\mu=1.
  \end{cases}
\end{align}
Then, by using  \eqref{LinfEst}, we know that, for all $t_*>0$,
\begin{equation}\label{theLinf}
  \left\|\theta^\epsilon \right\|_{L^\infty([\frac{t_*}{2},\infty)\times\R^2 )} \leq C_\beta \Big(\frac{ t_*}{2}\Big)^{-1/\beta} \|\theta_0\|_{L^2(\R^2)},
\end{equation}
thus in order for the inequality $\omega(0+,A_0)> 2\|\theta^\epsilon(\frac{t_*}{2})\|_{L^\infty}$ to be verified, we can let
\begin{equation}\label{eq:A0est}
  \omega(0+,A_0)> 2C_\beta \Big(\frac{ t_*}{2}\Big)^{-1/\beta} \|\theta_0\|_{L^2(\R^2)}.
\end{equation}
Hence, if $\mu\in [0,1)$, we need
\begin{equation*}
  \log \frac{1}{\delta} \geq  \left[\left( \log \frac{1}{A_0}\right)^{1-\mu} + \frac{b_3(1-\mu)}{\gamma}\left(2C_\beta  \Big(\frac{t_*}{2}\Big)^{-1/\beta} \|\theta_0\|_{L^2} +\frac{\gamma}{m(1)} \right)\right]^{\frac{1}{1-\mu}},
\end{equation*}
and using the inequality $(a+b)^{\frac{1}{1-\mu}}\leq C_\mu (a^{\frac{1}{1-\mu}} + b^{\frac{1}{1-\mu}})$ for $a,b>0$, one observes that it suffices to choose $\delta$ so that
\begin{equation}\label{del-cd1}
  \delta = A_0^{C_\mu} \exp\bigg(- C_\mu \bigg(\frac{3C_\beta b_3(1-\mu)}{\gamma} \left(\frac{t_*}{2}\right)^{-1/\beta} \|\theta_0\|_{L^2} + \frac{b_3(1-\mu)}{m(1)}\bigg)^{1/(1-\mu)}\bigg),
\end{equation}
whereas if $\mu=1$, it suffices to set $\delta$ as
\begin{equation*}
  \log\log\frac{1}{\delta} =\log \log \frac{1}{A_0} + \frac{b_3}{\gamma}\left(3C_\beta\Big(\frac{t_*}{2}\Big)^{-1/\beta} \|\theta_0\|_{L^2}+\frac{\gamma}{m(1)} \right),
\end{equation*}
that is,
\begin{equation}\label{del-cd2}
  \delta =A_0^{\exp \left(\frac{3C_\beta b_3}{\gamma}(\frac{t_*}{2})^{-1/\beta} \|\theta_0\|_{L^2} + \frac{b_3}{m(1)} \right)}.
\end{equation}

Next, we will prove that such moduli of continuity $\omega(\xi,\xi_0(t))$ are preserved by the regular solution $\theta^\epsilon(x,t+\frac{t_*}{2})$.
By choosing $\delta$ as \eqref{del-cd1}-\eqref{del-cd2} and using what we did before, we get that  $\theta^\epsilon(x,\frac{t_*}{2})$ obeys the modulus of continuity $\omega(\xi,A_0)$.
Then according to Proposition \ref{prop-GC}, and by using \eqref{theLinf}-\eqref{eq:A0est} and the fact $\omega(A_0,\xi_0)\geq \omega(0+,A_0)$ for all $\xi_0\geq 0$, we see that it suffices to verify that
\begin{equation}\label{Targ4}
  - \partial_{\xi_0}\omega(\xi,\xi_0) \xi'_0(t)+ \Omega(\xi,t)\partial_\xi\omega(\xi,\xi_0) + D(\xi,t) + \epsilon \partial_{\xi\xi}\omega(\xi,\xi_0)<0,
\end{equation}
for all $t$ so that $\xi_0(t)> 0$ and all $0<\xi\leq A_0$.
Since we also have \eqref{Ome-es1-2} for the estimation of $\Omega(\xi,t)$ and since $\partial_\eta \omega(\eta,\xi_0)\equiv 0$ for all $\eta>\frac{1}{2b_1}$, we see that
the proof of \eqref{Targ4} is the same as that of \eqref{Targ3} in the subsection \ref{subsec:EvReg}.
The conditions on $\rho,\kappa,\gamma$ can also be chosen as \eqref{rkg-cdsum}, and by setting $\sigma=\min\{\alpha+\frac{\beta}{2}, \frac{\alpha+1}{2}\}$, we may choose the coefficients $\rho,\kappa,\gamma$ as
\begin{equation}\label{rkg-cd*}
\begin{split}
\begin{cases}
  \rho =\frac{C_1}{2C}\min\set{\frac{1-\alpha}{\beta},\frac{1-\sigma}{\beta\sigma}}, \\
  \kappa= \frac{C_1\beta(1-\sigma)^2}{2C}\min\set{1-\alpha-\beta,\alpha+\beta-\sigma},   & \quad \textrm{if   }\alpha+\beta<1, \\
  \kappa= \frac{C_1\beta(1-\sigma)}{2C}\min\set{\frac{1}{C_\alpha'},(1-\sigma)^2},  & \quad \textrm{if   }\alpha+\beta=1, \\
  \kappa= \frac{C_1}{2C}\beta(1-\sigma)^2(\alpha+\beta-1),  & \quad \textrm{if   }\alpha+\beta>1, \\
  \gamma = \frac{1}{2C} \min \set{\sigma \kappa,(1-C_\alpha)^\alpha \kappa,(1-\sigma)(1-\alpha)\kappa,C_1(1-C_\alpha)\beta},
\end{cases}
\end{split}
\end{equation}
where $C_\alpha=\frac{2^\alpha -1}{\alpha}$, $C_\alpha'=\sup_{x\in [1,+\infty)}(\frac{1}{x^{1-\alpha}}\log x)$, $C>0$ is some absolute constant and $C_1=C_1(\beta)$ is the constant appearing in Lemma \ref{lem-mocdiss}.

Then after some finite time $t_1= \frac{A_0^\beta}{\beta \rho}$ so that $\xi_0(t_1)=0$, from the continuous property of $\theta^\epsilon(x,t)$ and $\omega(\xi,\xi_0(t))$,
we find that $\theta^\epsilon(x,\frac{t_*}{2}+t_1)$ obeys the modulus of continuity $\omega(\xi,0+)=\omega(\xi)$ given by \eqref{moc1.2}.
Thanks to \eqref{theLinf} and \eqref{eq:A0est}, the condition \eqref{eq:RCcd3} is immediately satisfied, then Lemma \ref{lem:RCpres} and Remark \ref{rmk:RCgene}
imply that the approximate solution $\theta^\epsilon$ (uniformly in $\epsilon$) obeys the modulus of continuity $\omega(\xi)$ given by \eqref{moc1.2} on the time interval $[\frac{t_*}{2}+t_1,\infty)$,
and thanks to \eqref{CsgmEs} we get
\begin{equation}\label{the-est1}
\begin{split}
    \sup_{t\in [t_1+t_*/2,\infty)} &  \|\theta^\epsilon(t)\|_{\dot C^\sigma(\R^2)}\leq \kappa \frac{1}{m(\delta^{-1})} \delta^{-\sigma} \leq \frac{\kappa}{m(1)}  \delta^{-\sigma}
\end{split}
\end{equation}
where $\delta$ is given by \eqref{del-cd1}-\eqref{del-cd2}, $\rho,\kappa,\gamma$ are fixed constants that appear in \eqref{rkg-cd*} and $0<A_0\leq \min\{\frac{1}{2b_1},\frac{1}{b_3}\}$.

Then, in order to let $t_1\leq \frac{t_*}{4}$, we also need that $A_0$ satisfies $ \frac{A_0^\beta}{\beta\rho }\leq \frac{t_*}{4}$, {\emph{i.e.}}
$A_0\leq  \left(\beta \rho t_*/4\right)^{1/\beta}$,
thus for each $\alpha\in (0,1)$ and $\beta\in (0,1]$, we can set $A_0$ to be
\begin{equation}
  A_0 = \min \set{\Big(\frac{\beta \rho t_*}{4}\Big)^{1/\beta},\frac{1}{2b_1},\frac{1}{b_3}},
\end{equation}
so that the uniform-in-$\epsilon$ H\"older estimate \eqref{the-est1} holds true with such an $A_0$ in the formula of $\delta$ defined by \eqref{del-cd1}-\eqref{del-cd2}.
By using Lemma \ref{lem:RC}, we can further show that $\theta^\epsilon \in C^\infty([t_*,\infty)\times\R^2)$ uniformly in $\epsilon$, which by passing $\epsilon\rightarrow 0$
implies that the global weak solution $\theta$ of the $(gSQG)_\beta$ equation \eqref{eq:gSQG} satisfies $\theta\in C^\infty([t_*,\infty)\times\R^2)$. This ends the proof of Theorem \ref{thm:RegLog}.

\qed

\section{Appendix}

\begin{proof}[\bf{Proof of the global part of Proposition \ref{prop:glob}}]
Assume that $T^*$ is the maximal time of existence for the solution $\theta^\epsilon$ to the equation \eqref{ap-gSQG2} in
$C([0,T^*),H^s(\R^2))\cap C^\infty([0,T^*)\times\R^2)$ with $s>2$. Then, according to the classical regularity criteria (see \cite{XZ}), it suffices to prove that the norm
$\|\nabla\theta^\epsilon\|_{L^\infty([0,T^*), L^\infty(\R^2))}$ is bounded.

In the sequel, we shall find some stationary modulus of continuity
\begin{equation}\label{eq:omelmd}
  \omega_\lambda(\xi)\equiv \lambda^{2-\alpha-\beta} \omega(\lambda \xi),\quad \lambda\in (0,\infty)
\end{equation}
where
\begin{equation}\label{eq:moc5}
\omega(\xi)=
\begin{cases}
\xi-\xi^{\frac{3}{2}}, \quad & \text{if} \quad 0< \xi\leq \delta, \\
 \delta-\delta^{\frac{3}{2}}, \quad & \text{if} \quad \xi>\delta,
\end{cases}
\end{equation}
with some $0<\delta<1$ chosen later, so that it is preserved by the evolution of equation \eqref{ap-gSQG2}. Clearly, $\omega_\lambda$ is a modulus of continuity, moreover, it satisfies
$\omega_\lambda(0+)=0$, $\omega_\lambda'(0+)=\lambda$ and $\omega_\lambda''(0+)= -\infty$.

We first notice that by choosing $\lambda$ as
\begin{equation}\label{eq:lambda}
 \lambda \equiv \max\left\{\Big(\frac{4\|\theta^\epsilon_0\|_{L^\infty}}{\delta/2-(\delta/2)^{3/2}}\Big)^{\frac{1}{2-\alpha-\beta}},
 \frac{\delta  \|\nabla\theta^\epsilon_0\|_{L^\infty}}{2\|\theta^\epsilon_0\|_{L^\infty}},1 \right\},
\end{equation}
we have that $\theta^\epsilon_0$ obeys this $\omega_\lambda$ for $\lambda$ sufficiently large. Indeed, to prove this claim, it suffices to observe that
$\min\{2\|\theta^\epsilon_0\|_{L^\infty}, \|\nabla\theta^\epsilon_0\|_{L^\infty}\xi \} < \omega_\lambda(\xi)$. Therefore, by setting $a_1=\frac{2\|\theta^\epsilon_0\|_{L^\infty}}{\|\nabla\theta^\epsilon_0\|_{L^\infty}}$,
and by using the concavity of $\omega_\lambda(\xi)$, we see that it suffices to show that
\begin{equation}\label{econd2}
  \omega_\lambda(a_1)=\lambda^{2-\alpha-\beta} \omega(\lambda a_1) > 2\|\theta^\epsilon_0\|_{L^\infty}.
\end{equation}
Therefore, in order to prove \eqref{econd2}, we only need to let $\lambda$ large enough so that
\begin{equation}\label{eq:fact5}
  \omega_\lambda(a_1)>\omega_\lambda\Big(\frac{\delta}{2\lambda}\Big) = \lambda^{2-\alpha-\beta} \omega\Big(\frac{\delta}{2}\Big) > 2\|\theta^\epsilon_0\|_{L^\infty},
\end{equation}
that is, $\lambda a_1 > \frac{\delta}{2}$ and $\omega\big(\frac{\delta}{2}\big)> \frac{2\|\theta^\epsilon_0\|_{L^\infty}}{\lambda^{2-\alpha-\beta}}$,
hence, we can choose $\lambda$ as \eqref{eq:lambda} and this proves the claim. \\

Then according to Proposition \ref{prop-GC}, it remains to check that for all $t\in (0,T^*)$ and
$0<\xi\in \{\omega_\lambda(\xi)\leq 2\|\theta^\epsilon(t)\|_{L^\infty}\}\subset \{\omega_\lambda(\xi)\leq 2\|\theta^\epsilon_0\|_{L^\infty}\}$, we have
\begin{equation}\label{TargApd}
  \Omega_\lambda(\xi) \omega_\lambda'(\xi) + \epsilon\, \omega_\lambda''(\xi)<0,
\end{equation}
where $\Omega_\lambda(\xi)$ is given by \eqref{Ome-es2}, namely
$$
  \Omega_\lambda(\xi)  \leq  C \int_0^\xi \frac{\omega_\lambda(\eta) m(\eta^{-1})}{\eta^\beta} \dd \eta + C \xi\int_\xi^\infty \frac{\omega_\lambda(\eta)m(\eta^{-1})}{\eta^{1+\beta}}\mathrm{d}\eta.
$$
By making a change of variables and using the fact $m(\eta^{-1})\leq \lambda^\alpha m((\lambda \eta)^{-1})$, for all $\eta>0$, one finds that
\begin{equation*}
\begin{split}
  \Omega_\lambda(\xi)  
  & \leq C \lambda \int_0^{\lambda\xi} \frac{\omega(\eta) m(\eta^{-1})}{\eta^\beta} \dd \eta
  + C \lambda^2 \xi\int_{\lambda\xi}^\infty \frac{\omega(\eta) m(\eta^{-1})}{\eta^{1+\beta}}\mathrm{d}\eta \equiv \lambda \Omega(\lambda\xi) .
\end{split}
\end{equation*}
Note that, using \eqref{eq:lambda} and \eqref{eq:fact5}, we have $\omega_\lambda(\frac{\delta}{2\lambda})>2\|\theta^\epsilon_0\|_{L^\infty}$, thus it suffices to prove that
\begin{equation*}
  \lambda^{4-\alpha-\beta}\big(\Omega\,\omega'+ \epsilon\omega''\big)(\lambda \xi)<0, \quad \textrm{for all}\;\; \xi\in \Big(0,\frac{\delta}{2\lambda}\Big).
\end{equation*}
Hence, our aim is to show that, the modulus of continuity $\omega$ defined by \eqref{eq:moc5} verifies
\begin{equation}\label{TargApd2}
  \Omega(\xi)\omega'(\xi)+ \epsilon\, \omega''(\xi)<0,  \quad \textrm{for all}\quad\xi\in (0,\delta/2),
\end{equation}
with $\Omega(\xi)\equiv C \big(\int^\xi_0 \frac{\omega(\eta) m(\eta^{-1})}{\eta^\beta}\dd \eta +  \xi\int_\xi^\infty \frac{\omega(\eta) m(\eta^{-1})}{\eta^{1+\beta}}\dd \eta \big)$ and $C>0$ a constant depending only on $\beta$.

Since $\eta^\alpha m(\eta^{-1})\leq \delta^\alpha m(\delta^{-1}) \leq m(1)$, $\forall\eta \in(0,\delta)$, we have
$\int_0^\xi \frac{\omega(\eta)m(\eta^{-1})}{\eta^\beta} \textrm{d} \eta \leq \frac{m(1)}{2-\alpha-\beta} \xi^{2-\alpha-\beta},$
 and
\begin{equation*}
  \int_\xi^\delta \frac{\omega(\eta)m(\eta^{-1})}{\eta^{1+\beta}} \textrm{d}
  \eta \leq m(1) \int_\xi^\delta \frac{1}{\eta^{\alpha+\beta}}\textrm{d} \eta \leq
  \begin{cases}
    \frac{m(1)}{1-\alpha-\beta} \delta^{1-\alpha-\beta}, &\quad \textrm{if   }\alpha+\beta<1, \\
    m(1) \log \frac{\delta}{\xi}, & \quad \textrm{if   } \alpha+\beta=1, \\
    \frac{m(1)}{\alpha+\beta-1} \xi^{1-\alpha-\beta}, & \quad \textrm{if  }\alpha+\beta>1.
  \end{cases}
\end{equation*}
Moreover,
$$\int_\delta^\infty \frac{\omega(\eta)m(\eta^{-1})}{\eta^{1+\beta}} \textrm{d} \eta  \leq \omega(\delta) m(\delta^{-1}) \int_\delta^\infty \frac{1}{\eta^{1+\beta}}\dd \eta
  \leq \frac{m(1)}{\beta} \delta^{1-\alpha-\beta}.$$
Obviously, $\omega'(\xi)\leq\omega'(0)=1$, so we get
$$\Omega(\xi) \omega'(\xi) \leq C_{\alpha,\beta} \delta^{2-\alpha-\beta},$$
where $C_{\alpha,\beta}=C \frac{m(1)}{1-\alpha-\beta}$ if $\alpha+\beta<1,$ $C_{\alpha,\beta}=Cm(1)$ if $\alpha+\beta=1$, $C_{\alpha,\beta}=C \frac{m(1)}{\alpha+\beta-1}$ if $\alpha+\beta>1.$

Since $\omega''(\xi)=-\frac{3}{4}\xi^{-\frac{1}{2}}<0$, then by choosing $\delta>0$ small enough, we find
\begin{align*}
  \Omega(\xi)\omega'(\xi)+ \epsilon\, \omega''(\xi)\leq  C_{\alpha,\beta} \delta^{2-\alpha-\beta} - \epsilon \frac{3}{4} \xi^{-\frac{1}{2}} \leq
  \delta^{-\frac{1}{2}} \Big(C_{\alpha,\beta}\delta^{5/2-\alpha-\beta}-\frac{3}{4}\epsilon\Big)<0,\quad \rm{for \ all} \ \,\xi\in (0,\delta/2).
\end{align*}

Hence, the solution $\theta^\epsilon$ to equation \eqref{ap-gSQG2} obeys such a modulus of continuity $\omega_\lambda$ with $\lambda$ given by \eqref{eq:lambda} for all $t\in [0,T^*)$,
which implies that
$\displaystyle\sup_{t\in[0,T^*)}\|\nabla\theta^\epsilon(t,\cdot)\|_{L^\infty}\leq \lambda$, hence,  Prop \ref{prop:glob} is proved.
\end{proof}

\section*{Acknowledgments}
\noindent O.L. was supported by the Marie-Curie Grant, acronym: TRANSIC, from the FP7-IEF program, and the ERC through the Starting Grant project H2020-EU.1.1.-63922. He was also supported by the National Grant MTM2014-59488-P from the Spanish government.
L.X. was partially supported by National Natural Science Foundation of China (Grants Nos. 11401027, 11671039 and 11771043).


\end{document}